\documentclass[12pt,reqno,oneside]{amsart}
\usepackage{mathdots}
\usepackage{amsmath}
\usepackage{amssymb}
\usepackage{amsthm}
\usepackage{graphics}
\usepackage{eucal}
\usepackage{bm}
\usepackage[colorlinks]{hyperref}
\usepackage[capitalize,nameinlink,noabbrev]{cleveref}
\usepackage{mathrsfs}
\usepackage[normalem]{ulem}
\usepackage{color}
\usepackage[dvipsnames]{xcolor}
\usepackage{mathtools}
\usepackage{geometry}\geometry{margin=1.2 in}

\newtheorem{theorem}{Theorem}[section]
\newtheorem{proposition}{Proposition}[section]
\newtheorem{lemma}{Lemma}[section]
\newtheorem{corollary}{Corollary}[section]

\theoremstyle{remark}
\newtheorem{remark}{Remark}[section]
\theoremstyle{definition}

\numberwithin{equation}{section}

\newcommand{\Spec}{\operatorname{Spec}}
\newcommand{\vv}[1]{{\mathbf{#1}}}

\newcommand{\SL}{\operatorname{SL}}
\newcommand{\diag}{\operatorname{diag}}

\newcommand{\GL}{\operatorname{GL}}

\newcommand{\bx}{\mathbf{x}}
\newcommand{\bU}{\mathbf{U}}
\newcommand{\bT}{\mathbf{T}}

\newcommand{\cL}{\mathcal{L}}
\newcommand{\Leb}{\mathcal{L}}
\newcommand{\bp}{\mathbf{p}}

\newcommand{\bv}{\mathbf{v}}
\newcommand{\bw}{\mathbf{w}}
\newcommand{\f}{\mathbf{f}}

\newcommand{\ve}{\varepsilon}

\newcommand{\ba}{\mathbf{a}}

\newcommand{\cc}{v}

\newcommand{\by}{\mathbf{y}}

\newcommand{\Q}{\mathbb{Q}}

\newcommand{\Z}{\mathbb{Z}}
\newcommand{\R}{\mathbb{R}}
\newcommand{\N}{\mathbb{N}}
\newcommand{\fff}{\mathit{f}}

\newcommand{\cM}{\mathcal{M}}

\newcommand{\ept}{(\varepsilon e^{-t})^{\frac{1}{2}}}
\newcommand{\eptm}{(\varepsilon e^{-t})^{-\frac{1}{2}}}

\newcommand{\ta}{\boldsymbol{\theta}}

\newcommand{\inv}{^{\text{-}1}}

\setcounter{tocdepth}{1}
\usepackage{mathtools}
\usepackage{tikz-cd}
\usepackage{graphicx}
\parskip=0.5ex

\begin{document}

\title[Diophantine approximation on manifolds]{Rational points near manifolds and  Khintchine theorem}

\author{Victor Beresnevich}
\address{\textbf{Victor Beresnevich} \\
Department of Mathematics, University of York, UK }
\email{victor.beresnevich@york.ac.uk}
\author{Shreyasi Datta}
\address{\textbf{Shreyasi Datta} \\
Department of Mathematics, University of York, UK}
\email{shreyasi.datta@york.ac.uk}

\date{}

\maketitle
\begin{abstract}
In this paper we complete the long standing challenge to establish a Khintchine type theorem for arbitrary nondegenerate manifolds in $\R^n$. In particular, our main result finally removes the analyticity assumption from the Khintchine type theorem proved in [Ann. of Math. 175 (2012), 187-235]. Furthermore, we obtain a Jarn\`ik type refinement of our main result, which uses Hausdorff measures. The results are also obtained  in the  inhomogeneous setting. The proofs are underpinned by a sharper version of a `quantitative nondivergence' result of Bernik--Kleinbock--Margulis and a duality argument which we use to study rational points near manifolds.
\end{abstract}

\section{Introduction}\label{Intro}

Throughout $\ta\in\R^n$, $\psi:(0,\infty)\to (0,1)$ is a non-increasing function such that $\psi(q)\to 0$ as $q\to\infty$ and $\cL_n$ denotes Lebesgue measure on $\R^n$. In this paper we will be interested in the following set that naturally arises in a variety of problems in the theory of inhomogeneous Diophantine approximation:
\begin{equation}\label{Spsi}
\mathcal{S}_n^{\ta}(\psi):=\left\{\bx\in \R^n~:~\left\Vert \bx-\frac{\bp+\ta}{q}\right\Vert\leq \frac{\psi(q)}{q} \text{ for i.m. } (\bp,q)\in \Z^n\times \N\right\},
\end{equation}
where `i.m.' reads `infinitely many' and  $\Vert\cdot\Vert$ denotes the sup norm. The points $\bx$ lying in $\mathcal{S}_n^{\ta}(\psi)$ are often referred to as $(\psi, \ta)$-approximable. When $\ta=\bf0$, we deal with  homogeneous approximations to points $\bx$ in $\R^n$ by rational points $\bp/q$, in which case $\bx\in \mathcal{S}_n^{\bf0}(\psi)$ is called $\psi$-approximable. Since the set $\mathcal{S}_n^{\ta}(\psi)$ is invariant under translations by integer vectors, it is often restricted to the unit hyper-cube $[0,1]^n$.

By the inhimogeneous version of Khintchine's classical results, see \cite[\S1.1]{MR4537336}, we fully understand the  probability of a random point $\bx\in[0,1]^n$ being $(\psi,\ta)$-approximable: 

\medskip

\noindent\textbf{The Inhomogeneous Khintchine theorem:} {\em For any monotonic $\psi$ and $\ta\in\R^n$
\begin{equation}\label{eq1.2}
    \Leb_n(\mathcal{S}_n^{\ta}(\psi)\cap [0,1]^n)=\left\{\begin{aligned}
        &0 &\text{ if } \sum_{q=1}^\infty \psi(q)^n<\infty,\\
        &1 &\text{ if } \sum_{q=1}^\infty \psi(q)^n=\infty.
    \end{aligned}\right.
\end{equation}
}

\medskip

A longstanding  notorious problem in the theory of Diophantine approximation is to obtain Khintchine type results when $\bx$ is restricted to a subset of $\R^n$, such as a manifold or a fractal, which supports a natural (probability) measure. In particular, Khintchine type results have long been search for in the case of so-called nondegenerate manifolds. In this paper we complete this quest by establishing the following key result.

\begin{theorem}\label{thm: main0}
Let $n\ge2$, $\psi$ be non-increasing, $\ta\in\R^n$ and $\cM$ be any nondegenerate submanifold of $\R^n$. Then, almost no/almost every point $\bx\in\cM$ is $(\psi,\ta)$-approximable provided that the sum
\begin{equation}\label{K-sum}
\sum_{q=1}^\infty\psi(q)^n
\end{equation}
converges/diverges.  
\end{theorem}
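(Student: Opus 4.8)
\emph{Strategy.} The proof splits into the convergence and divergence halves of a Borel--Cantelli dichotomy, but both reduce to the same analytic core: sharp estimates for the number of rational points of bounded denominator lying in a thin neighbourhood of $\cM$. Since \Cref{thm: main0} is a local statement, invariant under permuting coordinates, and every nondegenerate submanifold is, locally and after such a permutation, the graph $\{(\by,\f(\by)):\by\in U\}$ of a nondegenerate map $\f=(f_{d+1},\dots,f_n)\colon U\to\R^{n-d}$ on an open box $U\subseteq\R^{d}$ with $d=\dim\cM$, I would first pass to this Monge parametrisation and replace the natural measure on $\cM$ by the pushforward of Lebesgue measure on $U$. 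Writing $\ta=(\ta',\ta'')$ split along the graph coordinates, the task becomes to control those $\by\in U$ for which $\|q\by-\bp'-\ta'\|\le\psi(q)$ and $\|q\f(\by)-\bp''-\ta''\|\le\psi(q)$ for infinitely many $(\bp,q)$. Throughout let $N_{\cM}(Q,\delta)$ denote the number of rationals $\bp/q$ with $Q/2<q\le Q$ lying within $\delta$ of $\cM$, and $N_{\cM\cap B}(Q,\delta)$ its localisation to a ball $B$; in the relevant ranges one expects $N_{\cM}(Q,\delta)\asymp Q^{\,n+1}\delta^{\,n-d}$.

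\emph{Convergence.} Here I would run a first Borel--Cantelli argument. Grouping denominators into dyadic blocks $q\in(Q/2,Q]$, the $\by$ captured by a block are covered by $\ll N_{\cM}(Q,\psi(Q)/Q)$ caps of radius $\asymp\psi(Q)/Q$, each of $\cM$-measure $\asymp(\psi(Q)/Q)^{d}$, so the block contributes $\ll N_{\cM}(Q,\psi(Q)/Q)\,(\psi(Q)/Q)^{d}$ to the total measure. In the range where the main term governs the count this is $\asymp Q\,\psi(Q)^{n}$, and summing over dyadic scales, which is legitimate via Cauchy condensation since $\psi$ is non-increasing, recovers $\sum_{q}\psi(q)^{n}<\infty$. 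In the complementary ``trivial'' range $\psi(Q)$ is already so small that its smallness outweighs the merely polynomial-in-$Q$ error term in $N_{\cM}$, and the resulting series still converges. The upper bound for $N_{\cM}$ and the control on its error term, which originates from rationals anomalously concentrated near lower-dimensional rational affine subspaces, is exactly what the sharpened quantitative nondivergence estimate supplies: the duality argument recasts ``$\bp/q$ lies within $\delta$ of $\cM$ at denominator $\asymp Q$'' as ``the lattice $g_{Q}\,u_{\f(\by)}\Z^{n+1}$ contains a short vector'' for the appropriate diagonal $g_Q$ and unipotent $u_{\f(\by)}$, and the measure of the offending $\by$ is then bounded by the nondivergence theorem applied to the nondegenerate $\f$.

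\emph{Divergence.} This is the substantive half, and I would route it through the framework of ubiquitous systems. The aim is to show that the rational points $\{(\bp+\ta)/q\}$, weighted by denominator, form a local ubiquitous system in $\cM$ with respect to an explicit function $\rho(Q)\asymp Q^{-(n+1)/n}$: for every ball $B\subseteq U$ and all large $Q$, the union of the $\rho(Q)$-neighbourhoods (in graph coordinates, scaled by $q$) of the rationals with $q\in(Q/2,Q]$ occupies a positive proportion of the $\cM$-measure of $B$. This is precisely the statement that those rationals are $\rho(Q)$-dense in $\cM$, which by the two-sided count $N_{\cM\cap B}(Q,\rho(Q))\asymp\cL_{\cM}(B)\,Q^{\,n+1}\rho(Q)^{\,n-d}$ holds at exactly the chosen scale (note $Q^{\,n+1}\rho(Q)^{n}\asymp1$), \emph{provided} one establishes the matching two-sided asymptotic localised to balls at that scale. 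That lower bound for the count, again extracted via the duality argument and the sharp nondivergence estimate and supplemented by a quasi-independence/second-moment input to exclude clustering near rational subspaces, is the technical heart. Once local ubiquity is secured, the ubiquity theorem turns the divergence of $\sum_{q}\psi(q)^{n}$ into $\cL_{\cM}$-full measure of $\mathcal{S}_n^{\ta}(\psi)\cap\cM$; the same machinery simultaneously yields the Jarn\`ik-type Hausdorff-measure refinement, and the inhomogeneous shift $\ta$ only translates the resonant sets, leaving every exponent intact.

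\emph{Main obstacle.} The crux, as always in this area, is the counting of rational points near $\cM$ and, beneath it, the sharpened quantitative nondivergence estimate. The classical Bernik--Kleinbock--Margulis bound has an $\epsilon$-dependence too lossy to deliver the sharp exponent $n$ in the Khintchine sum, so one must prove a nondivergence statement with the correct power of $\epsilon$, valid for merely finitely differentiable (rather than analytic) nondegenerate $\f$, with the dependence on the $(C,\alpha)$-good constants and on the derivatives of $\f$ explicit enough to certify that the error terms are summable in the convergence case and negligible against the main term in the divergence case. Dropping the analyticity hypothesis of the 2012 result also removes the option of real-analytic transversality for keeping $\cM$ away from rational affine subspaces; that separation must now be achieved purely quantitatively and uniformly over all dyadic denominator blocks, and, in the divergence half, over the relevant window of scales $\delta$. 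Balancing these constants uniformly is where the real work lies.
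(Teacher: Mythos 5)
Your overall architecture -- Monge normal form, convergence via Borel--Cantelli plus an upper count of rationals near $\cM$, divergence via ubiquity plus a lower count, and a sharpened quantitative nondivergence estimate fed through a duality argument -- matches the paper. But the route you propose for the crucial ubiquity step in the divergence half is not the paper's, and it has a genuine gap. You aim to prove a two-sided asymptotic $N_{\cM\cap B}(Q,\rho(Q))\asymp\cL_\cM(B)\,Q^{n+1}\rho(Q)^{n-d}$ at the ubiquity scale $\rho(Q)\asymp Q^{-(n+1)/n}$ and then extract the measure lower bound via a ``quasi-independence/second-moment input to exclude clustering''. Two problems. First, the upper half of that asymptotic is not available at the ubiquity scale: Theorem~\ref{mainthm: upperbound} is only optimal for $\varepsilon > e^{-3t/(2md(2l-1)(n+1)+2n-1)}$, whereas the ubiquity scale corresponds to $\varepsilon\asymp Q^{-1/n}=e^{-t/n}$, which lies strictly below that threshold for all $m,d,l\ge1$; in that range the error term in \eqref{upper_bound} swamps the main term. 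Second, nothing in the nondivergence machinery delivers a second-moment bound for rational points near $\cM$; proving quasi-independence would be a substantial new piece of work that the paper never undertakes.

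The paper sidesteps both issues by reversing your implication: it proves ubiquity directly and derives the count lower bound as a corollary, rather than the other way round. The key is Lemma~\ref{lemma:BVVZ2.1}, a Dani-type correspondence from \cite{BVVZ2018}, which shows that the ``good'' set $\mathfrak{G}(\cc,t,\varepsilon)$ (where the lattice $a_{\varepsilon,t,\cc}^{-1}u_1(\bx)\Z^{n+1}$ has no short vector) is \emph{contained} in the union $\Delta_\ta$ of $\rho$-neighbourhoods of the rationals. The sharpened nondivergence Theorem~\ref{BKM_new}, applied after dualisation as in \eqref{eq5.8}, shows the complement of the good set has small measure (Proposition~\ref{key_propo}); this yields $\Leb_d(\Delta_\ta\cap B)\ge\tfrac12\Leb_d(B)$ (Theorem~\ref{main_div_1}) with no anti-clustering input at all, and Theorem~\ref{mainthm: lower bound} falls out immediately. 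A smaller misattribution in your sketch: the sharpened estimate Theorem~\ref{BKM_new} is what makes the \emph{divergence}/ubiquity range $\varepsilon\gg e^{-\eta t}$ with $\eta>1/n$ attainable; the convergence-side upper bound on the nongeneric part (Proposition~\ref{Bound on minor}) uses only the original Bernik--Kleinbock--Margulis estimate, Theorem~\ref{thm:KM}.
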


\begin{remark}
Nondegenerate manifolds  satisfy a natural and mild geometric condition. While leaving the definition of nondegeneracy until \S\ref{sec1.1}, we now describe our key new ideas and how Theorem~\ref{thm: main0} builds upon previous findings. The convergence case of this theorem was previously established in \cite{BY} in the homogeneous setting, that is for $\ta=\vv0$, and thus the convergence case of Theorem~\ref{thm: main0} can be regarded as an extension of \cite{BY} to the inhomogeneous setting.
However, the divergence case of Theorem~\ref{thm: main0} is new, even for $\ta=\mathbf{0}$ -- previous divergence results did not reach the full class of nondegenerate manifolds even in the homogeneous case.  Indeed, for manifolds of dimension $d\ge2$ the divergence case has previously been known only for analytic nondegenerate manifolds -- this  is the main result of  \cite{Ber12}. Beyond analytic manifolds, the divergence case of Theorem~\ref{thm: main0} was only ever established for nondegenerate curves ($d=1$, $n\ge2$), which is the main result of \cite{BVVZ2018}. We note that both \cite{Ber12} and \cite{BVVZ2018} adopt a ubiquity approach relying on an application of quantitative non-divergence estimates first introduced by Kleinbock and Margulis in their landmark work \cite{KM}. It is the application of the quantitative non-divergence estimates that represents a challenge resulting in additional assumptions on nondegenerate manifolds in  \cite{Ber12} and \cite{BVVZ2018}. In this paper we present a new approach using a version of the quantitative non-divergence estimate of Bernik-Kleinbock-Margulis \cite{BKM} and an adaptation of a duality argument from \cite{BY}. In \S\ref{SQN}, we also establish a strengthening of the quantitative non-divergence estimate of \cite{BKM} which we use to obtain the Hausdorff measure version of Theorem~\ref{thm: main}, namely Theorem~\ref{main: Haus_div} stated below.
\end{remark}

\begin{remark}
Apart from papers  \cite{Ber12,BVVZ2018,BY} that we mentioned in the previous remark, over the years the search for a Khintchine type theorem for manifolds has yielded a number of other milestones. In this context, we ought to mention the case of planar curves ($n=2$, $d=1$), where a homogeneous Khintchine type theorem was first established for arbitrary $C^3$ nondegenerate curves in the case of convergence \cite{VV06} and divergence \cite{BDV07}. Subsequently, these were taken to the inhomogeneous setting in \cite{BVV14}, while the $C^3$ assumption was removed, in fact relaxed to weak nondegeneracy - a condition that is locally closed in the uniform convergence topology, in \cite{BZ10} and \cite{Hua15}. Previous results in higher dimensions, have been established for various subclasses of nondegenerate manifolds with various geometric restrictions which were excluding curves, see \cite{Ber12, BVVZ2017, Huang2020, MR4706444, SST, MR4404031} and references within.
\end{remark}

\begin{remark}
It should be noted that there is the \textit{dual} to \eqref{Spsi} setting, {\em e.g.} see \cite{MR2508636, BernikDodsonBook}, which even in the case of manifolds has been well understood for quite some time now. Indeed, the story begins with the pioneering work of Kleinbock and Margulis \cite{KM} addressing approximating functions of the form $\psi(t)=t^{-w}$. Subsequently, a full dual analogue of Khintchine's theorem for nondegenerate manifolds was obtained in \cite{Bere99, BBKM, BKM} for homogeneous approximations and in \cite{BBV, BV} for inhomogeneous approximations. We also refer the reader to \cite{BGGV2020, Ghosh05, Ghosh11, Kleinbock-extremal, Kleinbock-exponent} for relevant results for affine subspaces which represent the degenerate case.
\end{remark}

\subsection{Nondegeneracy of maps and manifolds}\label{sec1.1}

In view of the measure theoretic nature of the problems we investigate, it suffices to consider manifolds locally. Hence, without loss of generality we will assume that $\cM=\f(\bU)$, where $\bU$ is an open subset of $\R^d$, $1\le d<n$ and $\f:\bU\to\R^n$ is a nonsingular  $C^1$ map, that is the Jacobian matrix of $\f$ has rank $d$. Here and elsewhere $d$ stands for the dimension of $\cM$. Following \cite{KM}, we will say that a map $\f:\bU\to\R^n$ is $l$-nondegenerate at $\bx\in\bU$ if $\f$ is $C^l$ on a neighbourhood of $\bx$ and there are $n$ linearly independent partial derivatives of $\f$ of orders $\le l$. We will say that $\f$ is nondegenerate at $\bx$ if it is $l$-nondegenerate at $\bx$ for some $l$. We say that $\f$ is nondegenerate in $\bU$ if $\f$ is nondegenerate at (Lebesgue) almost every $\bx\in \bU$. Respectively, \ manifold $\cM$ is called nondegenerate if a neighborhood of (almost) every point can be parametrised by a  nondegenerate map. 
If we are given a (local) parameterisation $\f:\bU\to\R^n$ of $\cM$ so that  $\cM=\f(\bU)$, where $\bU$ is an open subset of $\R^d$, $d=\dim\cM$, then we will conveniently use the pushforward $\f_\star\Leb_d$ of $d$-dimensional Lebesgue measure. This is defined for $A\subset\cM$ as follows:  $\f_\star\Leb_d(A):=\Leb_d(\f^{-1}(A))$. In view of the above remarks, Theorem~\ref{thm: main0} can now be equivalently reformulated as follows:

\begin{theorem}\label{thm: main}
Let $n\ge2$, $\bU\subset\R^d$ be an open subset, $\f:\bU\to\R^n$ be nonsingular and nondegenerate in $\bU$. Let $\ta\in\R^n$ and $\psi$ be  non-increasing. Then 
\begin{equation}\label{eq1.2+}
\Leb_d\big(\f^{-1}\big(\mathcal{S}_n^{\ta}(\psi)\big)\big)=\left\{\begin{aligned}
        &0 &\text{ if } \sum_q \psi(q)^n<\infty,\\
        &\Leb_d(\bU) &\text{ if } \sum_q \psi(q)^n=\infty.
    \end{aligned}\right.
\end{equation}
\end{theorem}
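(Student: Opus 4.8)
The plan is to treat the convergence and divergence halves separately, in each case reducing the study of rational points near $\cM$ to a \emph{dual} linear-forms approximation problem on $\cM$ via an adaptation to the inhomogeneous setting of the duality of \cite{BY}, and then exploiting, on the dual side, the quantitative nondivergence estimate of Bernik--Kleinbock--Margulis \cite{BKM} --- which, crucially, applies to \emph{every} nondegenerate $\f$ with no analyticity hypothesis --- together with the already established inhomogeneous Khintchine--Groshev theory for nondegenerate manifolds (\cite{BKM, Bere99, BBKM, BBV, BV}). Throughout, one may first localise to a small ball on which $\f$ is a fixed nonsingular $C^l$ map possessing $n$ everywhere linearly independent partial derivatives of order $\le l$, discarding the $\Leb_d$-null exceptional set, and reduce to $\psi$ satisfying mild regularity (including that $\sum_q\psi(q)^n=\infty$ forces $\psi(q)\gg q^{-1/n}$ on a large set, so the relevant rational points are abundant).

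For the convergence case I would follow the route of \cite{BY}, tracking the shift $\ta$. Decompose the denominators into dyadic blocks $q\in[Q,2Q)$; the key input is a sharp bound on the number $N(Q)$ of rational points $(\bp+\ta)/q$ with $q\in[Q,2Q)$ that lie within distance $\psi(Q)/Q$ of $\cM$ (inside a fixed bounded region), of the shape
\[
N(Q)\ \ll\ Q^{d+1}\psi(Q)^{\,n-d}\ +\ (\text{lower-order term}),
\]
obtained by translating this count of rational points in an anisotropic box near $\cM$ into a count of integer solutions of a dual system of $n$ linear inequalities in $\f(\bx)$ and applying quantitative nondivergence. Since the $\f$-preimage of a single such box has $\Leb_d$-measure $\ll(\psi(Q)/Q)^d$, each dyadic block contributes $\ll Q\,\psi(Q)^n$ plus a summable remainder; Cauchy condensation (using that $\psi$ is non-increasing) converts $\sum_q\psi(q)^n<\infty$ into summability over the blocks, and Borel--Cantelli gives $\Leb_d(\f^{-1}(\mathcal{S}_n^\ta(\psi)))=0$.

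For the divergence case I would run a ubiquity argument. Fixing a small ball $B\subset\bU$, the main task is \emph{local ubiquity} of the rational points near $\cM$ at the critical scale: for some $\kappa>0$ and a sequence $Q_k\to\infty$, the set of $\bx\in B$ admitting $(\bp,q)$ with $q\le Q_k$ and $\|\f(\bx)-(\bp+\ta)/q\|\le\kappa\,Q_k^{-(n+1)/n}$ has $\Leb_d$-measure $\ge\tfrac12\Leb_d(B)$. I would derive this from a two-sided, locally uniform version of the count above --- in particular its sharp \emph{lower} bound, obtained from the divergence half of the dual inhomogeneous Khintchine--Groshev theorem for nondegenerate manifolds transported back through the duality --- combined with an overlap (second-moment) estimate for the approximating neighbourhoods that follows from the same regular counting. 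With local ubiquity in hand, the family $\{(\bp+\ta)/q\}$, together with the critical function, forms a ubiquitous system, and the ubiquity lemma of Beresnevich--Dickinson--Velani, fed with the divergence of $\sum_q\psi(q)^n$ and the monotonicity of $\psi$, upgrades this to $\Leb_d\big(B\setminus\f^{-1}(\mathcal{S}_n^\ta(\psi))\big)=0$; letting $B$ exhaust $\bU$ yields full measure.

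The principal obstacle, and exactly the point at which earlier arguments required additional hypotheses on $\cM$ for $d\ge2$, is establishing the sharp count of rational points of bounded denominator near an \emph{arbitrary} nondegenerate manifold uniformly --- especially the lower bound needed for divergence --- with enough regularity to control overlaps. The resolution is the combination highlighted in the introduction: the duality of \cite{BY} reduces this to a dual linear-forms count on $\cM$, where the nondivergence estimate of \cite{BKM} (and, for the Hausdorff refinement of \S\ref{SQN}, its sharpening) applies with no regularity beyond $C^l$ nondegeneracy. A secondary, pervasive technical burden is carrying the inhomogeneous parameter $\ta$ through the duality and through the nondivergence estimate, the latter now run for affine lattices.
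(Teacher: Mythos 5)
Your high-level architecture matches the paper's: localize, split convergence/divergence, exploit the lattice duality of \cite{BY} and the Bernik--Kleinbock--Margulis quantitative nondivergence, finish with Borel--Cantelli and ubiquity. The convergence half is described in the right spirit: the paper splits a neighbourhood of $\bx_0$ into a ``generic'' set where $\delta_{n+1}$ of a specific lattice $d_\varepsilon b_t g_{\varepsilon,t}u_1(\bx)\Z^{n+1}$ is bounded (yielding a sharp lattice-point count --- your main term) and a ``nongeneric'' set whose $\Leb_d$-measure is controlled by quantitative nondivergence applied to the dual lattice (your summable remainder), after which Borel--Cantelli closes the argument.

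The divergence half is where there is a genuine gap. You propose to derive local ubiquity from a sharp lower bound on the count of rational points near $\cM$ combined with a second-moment (overlap) estimate, invoking ``the divergence half of the dual inhomogeneous Khintchine--Groshev theorem transported back through the duality.'' But controlling overlaps of the approximating balls for an arbitrary nondegenerate manifold with $d\geq 2$ is precisely the obstacle that forced analyticity or extra smoothness assumptions in earlier work: the requisite uniform upper count is \emph{not} available (the paper only proves it away from the nongeneric set), so the variance estimate you would need is not at hand; and the a.e.\ dual Khintchine--Groshev statement is qualitative and does not give the $t$-uniform lower bound that ubiquity requires. The paper's key move is to establish local ubiquity \emph{directly}, with no overlap estimate at all: the sharpened quantitative nondivergence (Theorem~\ref{BKM_new}), fed through the $\delta_1$-versus-$\delta_{n+1}$ duality of Lemma~\ref{dual lemma}, shows that the set $\mathfrak{G}(\cc,t,\varepsilon)$ of $\bx\in B$ for which the associated lattice is balanced has $\Leb_d$-measure $\geq(1-\kappa)\Leb_d(B)$; a separate pigeonhole/lattice-point fact (Lemma~\ref{lemma:BVVZ2.1}, lifted from \cite{BVVZ2018}) then shows that every balanced $\bx\in B^\rho$ already lies within distance $\rho$ of a rational point $(\bp+\ta)/q$, so $\Delta_{\ta}(t,\varepsilon,B,\rho)\supset\mathfrak{G}(\cc,t,\varepsilon)\cap B^\rho$ covers $\geq\tfrac12$ of $B$. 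Note also that the duality used is between the first and last successive minima of a lattice and its dual, not between the simultaneous and dual Diophantine approximation problems. Without this direct route to local ubiquity, your proposal would not get past the $d\geq 2$ case that is the whole point of the theorem. (A minor further detail: the paper funnels the divergence case through the Hausdorff-measure Theorem~\ref{main: Haus_div} at $s=d$ applied to a truncated $\psi_1$, discarding a null exceptional set, to respect the range restriction $\psi(q)^{1/\eta}q\to\infty$.)
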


\subsection{Rational points near manifolds}\label{intro_counting}
The key to the divergence result is the construction of an appropriate ubiquitous system of rational points lying near a given manifold. A by-product of this is a lower bound on the number of rational points lying close to a given manifold, namely a lower bound on
\begin{equation}\label{vb1}
\#\left\{(\bp,q)\in\Z^{n+1}~:~\inf_{\by\in \cM}\left\Vert \by-\frac{\bp+\ta}{q}\right\Vert\leq \frac{\psi(e^t)}{e^t},\; 0<q<e^t\right\},
\end{equation}
where $\mathcal{M}$ is a given manifold, or rather a generic open subset of a given manifold.

The convergence case generally requires an optimal (up to a constant factor) upper bound for
\eqref{vb1}. 
We note that as a consequence of the convergence of \eqref{K-sum}, the monotonic function $\psi$ must satisfy the inequality \begin{equation}\label{vb2}
\psi(t)<t^{-1/n}
\end{equation}
for all sufficiently large $t$.

Regarding \eqref{vb1}, in the groundbreaking work \cite{VV06} an optimal upper bound was established for any nonplanar $C^3$ curve in $\R^3$ for the best possible range of approximating functions $\psi$. But in higher dimensions obtaining the expected (heuristic) upper bound for \eqref{vb1} turns out to be a difficult problem. Indeed, until recently all the results in higher dimensions required extra conditions on the nondegenerate manifolds; see \cite{BVVZ2017,Huang2020}. In \cite{BY} for any nondegenerate manifolds a sharp upper bound was established only near a part of the manifold, that excludes a small Lebesgue measure set. More is known about the lower bound; see \cite{Ber12, BVVZ2018, SST}.

To state our new results we now introduce further notation. For any $\Delta\subset \bU,$
define 
$$N_{\ta}^\star(\Delta;\varepsilon, t):=\#\left\{ (\bp,q)\in\Z^{n+1}~|~ {e^{t-1}}\leq q\leq e^t, \inf_{\bx\in\Delta}\left\Vert \f(\bx)-\frac{\bp+\ta}{q}\right\Vert\leq \frac{\varepsilon}{e^t}\right\}$$ and 
$$
N_{\ta}(\Delta;\varepsilon, t):=\#\left\{ (\bp,q)\in\Z^{n+1}~|~ 1\leq q\leq e^t, \inf_{\bx\in\Delta}\left\Vert \f(\bx)-\frac{\bp+\ta}{q}\right\Vert\leq \frac{\varepsilon}{e^t}\right\}.
$$ 
Let 
\begin{equation}\label{def:eta}
\eta=\eta(n,d):=\begin{cases} \frac{1}{n-1} &\text{ if } d>1,\\[2ex]
    \frac{3}{2n-1} &\text{ if } d=1 .\end{cases}\end{equation}
By $x\ll_{\delta}y,$ we mean $x\leq c y,$ where $c>0$ is a constant that depends only on $\delta$, a parameter.
Now we state our main theorems on counting rational points near manifolds. 

\begin{theorem}[Lower bound]\label{mainthm: lower bound}
Let $n\ge2$, $ 1\le d<n$, $\ta\in\R^n$, $\bU\subset\R^d$ be an open subset, $\f:\bU\to\R^n$ be nonsingular and nondegenerate at  $\bx_0\in\bU$. Let  $\eta$ be as in \eqref{def:eta}. Then there exists an open neighborhood $B_0$ of $\bx_0$ such that for any ball $B\subset B_0$, there exists $t_0=t_0(B, n, \f)$ such that for every $t\geq t_0,$
\begin{equation}
    N_{\ta}^\star(B;\varepsilon, t)\gg_{\bx_0, d, \f} \varepsilon^m e^{(d+1)t}\Leb_d(B)
\end{equation}
for any $\varepsilon\in(0,1)$ satisfying 
\begin{equation}
\varepsilon\gg_{n, \f,\bx_0} e^{- t\eta}\,.
\end{equation}
\end{theorem}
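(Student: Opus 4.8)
The plan is to establish the lower bound by producing, for a typical rational denominator $q$ in the dyadic range $[e^{t-1}, e^t]$, a controlled \emph{lower} bound on the number of integer vectors $\bp$ such that $(\bp+\ta)/q$ lies within $\varepsilon/e^t$ of $\f(B)$; summing over $q$ then yields the claimed count. The first step is to pass from the manifold to a local Taylor/graph model: after shrinking to a neighbourhood $B_0$ of $\bx_0$ where $\f$ is, say, $C^2$ and nonsingular, we may write $\f$ (up to an affine change of coordinates on $\R^n$) as $\f(\bx) = (\bx, \phi(\bx))$ where $\phi:\bU\to\R^{n-d}$ is $C^2$, with the Hessians controlling the curvature. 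Crucially, nondegeneracy at $\bx_0$ should \emph{not} be needed in full strength for the lower bound — a lower count of rational points near a manifold follows just from the fact that the manifold is a $d$-dimensional $C^2$ graph sitting inside a bounded region; the role of nondegeneracy here is essentially to guarantee nonsingularity on an open neighbourhood. (The exponent restriction $\varepsilon \gg e^{-t\eta}$ with the dichotomous $\eta$ is where curvature genuinely enters, via the trade-off between how thin the neighbourhood of $\cM$ is and how many lattice points can concentrate in it.)

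The core counting step I would run as follows. Fix $q \asymp e^t$. The rational points with denominator $q$ that I want to count are parametrised by $\bp \in \Z^{n+1}$ (really $(\bp,q)$, but $q$ is fixed now), and I need $\bp/q + \ta/q$ within $\varepsilon/e^t \asymp \varepsilon/q$ of some $\f(\bx)$, $\bx\in B$. Writing $\bp = (\bp', \bp'')$ according to the graph splitting, the condition $\|\bx - (\bp'+\ta')/q\| \le \varepsilon/q$ pins $\bx$ to a box of sidelength $2\varepsilon/q$ around each admissible $\bp'$, and there are $\asymp q^d \Leb_d(B)$ choices of $\bp'$ for which this box meets $B$ — PROVIDED $\varepsilon/q$ is small compared to $\Leb_d(B)^{1/d}$, which holds for $t$ large. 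Having fixed such a $\bp'$ and hence $\bx$ essentially to within $\varepsilon/q$, the remaining condition is $\|\phi(\bx) - (\bp''+\ta'')/q\| \le \varepsilon/q$, i.e. $\bp''$ must lie within $\varepsilon$ of $q\phi(\bx) - \ta''$ in each of the $n-d$ coordinates. Here is the point: we do NOT want to count for a \emph{fixed} $\bx$; we want to count pairs, so we should choose $\bx$ \emph{within its $\varepsilon/q$-box} so as to make as many of these $\bp''$-conditions satisfiable. For a single coordinate of $\phi$, as $\bp'$ ranges over a further dyadic-type sub-box, the fractional parts of $q\phi(\bx_{\bp'})$ equidistribute well enough (again by the $C^2$, non-flat structure, which prevents them from all clustering) that a positive proportion of $\bp'$ admit a valid $\bp''$; iterating this over all $n-d$ coordinates and tracking the loss at each stage produces the exponent $m = n-d$ on $\varepsilon$ and the honest constant depending on $\bx_0$, $d$, $\f$. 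This is most cleanly done by a mean-value / pigeonhole argument: $\sum_{\bp'} \#\{\bp'' : \ldots\} \gg \varepsilon^{n-d} \cdot q^d \Leb_d(B)$ by estimating the average number of valid $\bp''$ per $\bp'$ from below, which only needs that $\phi$ does not collapse the relevant box too much — a Jacobian/curvature lower bound of the kind furnished on $B_0$.

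Summing over the $\asymp e^t$ values of $q$ in $[e^{t-1},e^t]$ gives the total $\gg \varepsilon^{n-d} e^{dt} \Leb_d(B) \cdot e^t = \varepsilon^m e^{(d+1)t}\Leb_d(B)$, as required. The main obstacle — and the place where the dichotomy in $\eta$ is forced — is controlling the equidistribution of $\{q\,\phi(\bx_{\bp'})\}$ when $d=1$, i.e. ensuring that the fractional parts genuinely spread out rather than concentrating on a thin set, which would destroy the lower bound; for curves one has fewer parameters to average over, so one needs a sharper input (a second-order/curvature argument, or a detour through the dual problem as in \cite{BY}), and this is exactly why the admissible range of $\varepsilon$ shrinks to $\varepsilon \gg e^{-3t/(2n-1)}$ rather than $\varepsilon \gg e^{-t/(n-1)}$. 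For $d>1$, the extra $d-1$ directions of freedom make the averaging robust and the weaker restriction $\varepsilon \gg e^{-t/(n-1)}$ suffices. A secondary technical point is to handle the inhomogeneous shift $\ta$: since $\ta/q$ is simply a fixed translation for each $q$, it disappears into the box-counting and equidistribution estimates without changing the exponents — this is why the bound is uniform in $\ta$.
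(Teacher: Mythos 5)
Your approach — a direct pigeonhole/equidistribution argument counting lattice points near the graph for each fixed $q$ and then summing — is quite different from the paper's, and unfortunately it has a genuine gap at its core. The paper obtains Theorem~\ref{mainthm: lower bound} via a lattice/dynamics argument: it uses duality (Lemma~\ref{dual lemma}) to transfer a bound on the \emph{last} successive minimum of $a_{\varepsilon,t,v}^{-1}u_1(\bx)\Z^{n+1}$ into a bound on the \emph{first} successive minimum of the dual lattice, then feeds that into the sharpened quantitative nondivergence estimate (Theorems~\ref{BKM_new} and~\ref{thm:KM}) to show that the ``good'' set $\mathfrak{G}(v,t,\varepsilon)$ fills most of $B$ (Proposition~\ref{key_propo}); Lemma~\ref{lemma:BVVZ2.1} then converts this into the lower count.

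The gap in your argument is precisely the step you describe as ``the fractional parts of $q\phi(\bx_{\bp'})$ equidistribute well enough.'' For a \emph{fixed} $q$ this is simply not true in general: the numbers $q\phi(\bp'/q) \pmod 1$, as $\bp'$ ranges over a box, can concentrate badly for infinitely many $q$, and no pointwise Jacobian or Hessian lower bound on $\phi$ prevents this. Your proposed mean-value estimate $\sum_{\bp'} \#\{\bp'': \cdots\} \gg \varepsilon^{n-d}q^d\Leb_d(B)$ for each individual $q$ in the dyadic range would, if true, be a far stronger statement than is needed and is in fact false. What \emph{is} true — and this is exactly the content of the quantitative nondivergence theorem and the duality argument — is that the set of $\bx$ for which the dual lattice $g\Z^{n+1}$ has an unexpectedly short vector (which corresponds to clustering of the fractional parts) has small measure; that is a measure statement on the parameter $\bx$, not a pointwise statement for fixed $q$, and proving it requires the full $(C,\alpha)$-good function machinery behind Theorems~\ref{thm:KM} and~\ref{BKM_new}. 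You cannot circumvent this by a direct second-moment or stationary-phase argument without essentially re-proving nondivergence.

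Two further issues. First, you assert that nondegeneracy is not needed in full strength and that ``$C^2$ graph sitting inside a bounded region'' suffices for the lower bound. This is false: for an affine subspace with badly approximable defining coefficients (say, a line $y=\alpha x$ with $\alpha$ Liouville), the number of rational points within $\varepsilon/e^t$ of it can be dramatically smaller than the heuristic $\varepsilon^m e^{(d+1)t}$, so the lower bound fails. Nondegeneracy is exactly what rules out this kind of near-flatness, and it is used crucially in Proposition~\ref{key_propo}. Second, you have the dichotomy in $\eta$ reversed: for $d=1$ one has $\eta = 3/(2n-1)$, which for $n\ge 3$ is \emph{larger} than $1/(n-1)$, so the admissible range of $\varepsilon$ is \emph{wider} for curves, not narrower; the curve case is in fact the more favourable one, via the refined estimates of \cite{BVVZ2018}.
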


Compared to previous results, Theorem~\ref{mainthm: lower bound} removes any extra conditions on the manifolds beyond the nondegeneracy. Indeed, the previous results; see \cite{Ber12, BVVZ2018, SST} and the references therein, have either dimension restriction or smoothness/analyticity assumptions on the manifolds, although their range of $\varepsilon$ may be bigger than that in Theorem~\ref{mainthm: lower bound}. Thus even for $\ta=\mathbf{0}$ Theorem~\ref{mainthm: lower bound} is new.

\begin{theorem}[Upper bound]\label{mainthm: upperbound}
   Let $n\ge2$, $ 1\le d<n, \ta\in\R^n, \bU\subset\R^d$ be an open subset, $\f:\bU\to\R^n$ be nonsingular and $l$-nondegenerate at  $\bx_0\in\bU$. Then there exists an open neighborhood $B_0$ of $\bx_0$ such that for some $t_0=t_0(B_0, n, \f)$, for every $t\geq t_0,$
 \begin{equation}\label{upper_bound}
        N_{\ta}^\star(B_0;\varepsilon,t)\ll_{n,\f, B_0}\varepsilon^m e^{(d+1)t}+ e^{(d+1)t} \left(\varepsilon^{n-1/2} e^{3t/2}\right)^{-\alpha},
    \end{equation} where $\alpha= \frac{1}{d(2l-1)(n+1)}$.
In particular, if $\varepsilon > e^{\frac{-3t}{2md(2l-1)(n+1)+2n-1}}$ then the first term in \eqref{upper_bound} dominates and the bound is optimal up to a constant factor.
\end{theorem}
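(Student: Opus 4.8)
The plan is to combine the duality-and-counting scheme of \cite{BY} with the quantitative non-divergence estimate of \cite{BKM} (see \S\ref{SQN}); this should cover the whole neighbourhood $\f(B_0)$ rather than just a generic part, as in \cite{BY}, at the price of the error term in \eqref{upper_bound}. First I would pass from the count to a measure statement. After shrinking $B_0$ we may assume $\f$ is $C^l$, $l$-nondegenerate with uniform constants, and bi-Lipschitz onto its image on $B_0$. Every pair $(\bp,q)$ counted by $N_{\ta}^\star(B_0;\varepsilon,t)$ then has a nonempty fattened preimage $R(\bp,q):=\{\bx\in B_0:\|\f(\bx)-(\bp+\ta)/q\|\le 2\varepsilon e^{-t}\}$, of measure $\Leb_d(R(\bp,q))\asymp(\varepsilon e^{-t})^d$, so by Tonelli
\[ N_{\ta}^\star(B_0;\varepsilon,t)\,(\varepsilon e^{-t})^d\ \asymp\ \int_{B_0}M(\bx)\,d\bx,\qquad M(\bx):=\#\{(\bp,q):e^{t-1}\le q\le e^t,\ \bx\in R(\bp,q)\}. \]
For fixed $\bx$, since $q\le e^t$, membership in $R(\bp,q)$ forces $\|q\f(\bx)-\bp-\ta\|\le2\varepsilon$, leaving $O(1)$ choices of $\bp$ per $q$, so $M(\bx)\ll\#\{q\in[e^{t-1},e^t]:\langle q\f(\bx)-\ta\rangle\le2\varepsilon\}$, the bracket denoting distance to $\Z^n$. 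It therefore suffices to bound $\int_{B_0}\#\{q:\langle q\f(\bx)-\ta\rangle\le2\varepsilon\}\,d\bx$ and multiply by $(e^t/\varepsilon)^d$. Equivalently, passing to the space of unimodular grids in $\R^{n+1}$, this integral weighs the $\bx\in B_0$ whose trajectory $s\mapsto g_s\,u_{\f(\bx)}(\Z^{n+1}+\bw)$, $s\in[t-1,t]$ — with $g_s$ contracting the $q$-coordinate by $e^{-s}$ and expanding the remaining $n$ coordinates, $u_{\f(\bx)}$ the standard unipotent, and $\bw$ encoding $(\ta,0)$ — produces a short vector, by the number of such $s$; this is the inhomogeneous analogue of the duality used in \cite{BY}.

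Then I would split this into an expected part and a cusp part. Away from an exceptional set of $\bx$ — those for which $\f(\bx)$ lies anomalously close to a rational affine subspace — a direct geometric count using only that $\f$ is non-degenerate and spread out on $B_0$ makes $\#\{q:\langle q\f(\bx)-\ta\rangle\le2\varepsilon\}$ integrate to $\ll\varepsilon^n e^t$ (one transverse factor of $\varepsilon$ per codimension, summed over the $\asymp e^t$ values of $q$); after multiplication by $(e^t/\varepsilon)^d$ this gives the main term $\varepsilon^m e^{(d+1)t}$ with $m=n-d$, matching the lower bound of Theorem~\ref{mainthm: lower bound}. The exceptional $\bx$ are where the quantitative non-divergence estimate of \cite{BKM} enters: applied to the $l$-nondegenerate $\f$ on a sufficiently small $B_0$ it provides $C,\alpha>0$ with $\alpha=\tfrac1{d(2l-1)(n+1)}$, the exponent being governed by the $(C,\alpha)$-good property of $\f$ and its minor functions on $B_0$, such that the measure of $\bx\in B_0$ whose grid acquires a nonzero vector of norm $<\rho$ for some $s\in[t-1,t]$ is $\ll\rho^\alpha$ throughout the admissible range of $\rho$. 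Inserting the cusp parameter dictated by the two scales $e^t$ (for $q$) and $\varepsilon$ (for the distance) — whose matching is where the exponents $\tfrac32$ and $2n-1$ arise, from the shape of the relevant convex body — and multiplying by $(e^t/\varepsilon)^d$ yields the error term $e^{(d+1)t}(\varepsilon^{n-1/2}e^{3t/2})^{-\alpha}$. Adding the two contributions gives \eqref{upper_bound}, and comparing exponents shows $(\varepsilon^{n-1/2}e^{3t/2})^{-\alpha}<\varepsilon^{n-d}$ exactly when $\varepsilon>e^{-3t/(2md(2l-1)(n+1)+2n-1)}$, which is the final assertion.

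The hard part is the sharp expected bound $\ll\varepsilon^n$ per value of $q$ on the non-exceptional set: this is precisely the estimate that \cite{BY} could obtain only after deleting an exceptional subset of the manifold, and whose absence forced the analyticity or dimension restrictions of \cite{Ber12,BVVZ2018}. The new point is that it need not be established in isolation — the exceptional $\bx$ are absorbed into the error term via quantitative non-divergence — so the genuine work reduces to (i) checking that the hypotheses of the \cite{BKM} estimate hold on a small enough $B_0$ with the claimed exponent $\alpha$ (uniform $(C,\alpha)$-goodness of the coordinates of $\f$ and the non-concentration near affine hyperplanes, both following from $l$-nondegeneracy after shrinking $B_0$), and (ii) carrying the inhomogeneous shift $\bw$ through the grid version of that estimate — the step that genuinely leaves the homogeneous setting of \cite{BY}. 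The remaining points — imprimitive pairs $(\bp,q)$, pairs with $q$ in a thin sub-window of $[e^{t-1},e^t]$, and the conversion constants — are routine, handled by a dyadic decomposition and summation over $\gcd(\bp,q)$, affecting only implied constants.
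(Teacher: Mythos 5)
Your high-level plan — split $B_0$ into a ``generic'' set carrying the expected count and an ``exceptional'' set whose measure is controlled by Bernik--Kleinbock--Margulis quantitative non-divergence — is indeed what the paper does, and your back-of-envelope arithmetic for the threshold $\varepsilon>e^{-3t/(2md(2l-1)(n+1)+2n-1)}$ is correct. But the two steps you flag as ``the genuine work'' are not where the actual difficulty lies, and the step you treat as ``a direct geometric count'' is precisely the gap. Let me be concrete.

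First, the generic bound. You reduce via Tonelli to bounding $\int_{B_0}\#\{q\in[e^{t-1},e^t]:\langle q\f(\bx)-\ta\rangle\le 2\varepsilon\}\,d\bx$, and then assert that ``one transverse factor of $\varepsilon$ per codimension, summed over $q$'' gives $\ll\varepsilon^n e^t$ away from an exceptional set. That per-$q$ estimate — that for a fixed $q$ the measure of $\bx$ with $q\f(\bx)$ within $\varepsilon$ of $\Z^n+\ta$ is $\ll\varepsilon^n\Leb_d(B_0)$ — is exactly the heuristic Khintchine-on-manifolds bound that has no direct geometric proof: it is false for many individual $q$ (when $\f(B_0)$ hugs a low-height rational affine subspace), and the exceptional set one can remove via BKM is defined uniformly in $q$ (it depends on $\varepsilon,t$ but not on which $q$ is bad), so it does not rescue a per-$q$ estimate. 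The paper does not do Tonelli at scale $\varepsilon e^{-t}$ at all. It works at the intermediate scale $\varepsilon^{1/2}e^{-t/2}$: Lemma~\ref{lemma:useful} transfers every approximation in the ball $\Delta_t(\bx_0)$ of that radius to a linearised inequality at the centre $\bx_0$, then Lemma~\ref{lemma: inhomo count} subtracts two such rational approximations to produce a vector of the \emph{homogeneous} lattice $u_1(\bx_0)\Z^{n+1}$ lying in a box, and bounds the number of such vectors by the volume of the box, using the genericity condition — defined as an upper bound on $\delta_{n+1}(d_\varepsilon b_t g_{\varepsilon,t}u_1(\bx_0)\Z^{n+1})$ — to guarantee that the lattice is balanced enough for a volume bound. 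A covering argument (Lemma~\ref{lemma: covering count}) then sums over $\bx_0$. Your proposal has no analogue of the intermediate-scale localisation or the subtraction, and without them the per-$q$ equidistribution you invoke is not available.

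Second, the inhomogeneous shift. You identify ``carrying the inhomogeneous shift $\bw$ through the grid version of \cite{BKM}'' as the step that leaves the homogeneous setting, and fold it into the non-divergence hypothesis. The paper deliberately avoids any grid/affine-lattice version of non-divergence: because the generic/non-generic dichotomy is defined by $\delta_{n+1}$ of the \emph{unshifted} lattice $u_1(\bx)\Z^{n+1}$, and the inhomogeneous parameter $\ta$ only enters the counting lemma, where it cancels when one subtracts two approximating pairs $(\bp,q)$ and $(\bp',q')$. Thus the BKM input is applied exactly as stated (Theorem~\ref{thm:KM}, via the duality Lemma~\ref{dual lemma} and the inclusion $\mathfrak{M}^{new}(\varepsilon,t)\subset\mathfrak{S}_\f(ce^{-t},c\varepsilon^{-1/2}e^{-t/2},c\varepsilon^{-1})$ of Lemma~\ref{inclusion}), with no extension needed. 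An inhomogeneous BKM would be a nontrivial additional theorem, not a routine check, so building the proof on it is both unnecessary and a genuine extra burden your sketch does not discharge.

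So: the architecture you propose is the right one, but the component you label routine (the ``direct geometric count'') is the heart of the argument and is missing, while the component you label as the genuine new work (inhomogeneous BKM) is a detour the paper's subtraction trick removes.
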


Even in the case of $\ta=\mathbf{0},$ the above theorem extends \cite[Theorem 1.5]{SST}, where it was shown that if $\f$, as in Theorem~\ref{mainthm: upperbound}, is also $\lceil\frac{n+1}{\delta}\rceil$ times continuously differentiable for some $0<\delta\leq \frac{1}{2n+10}$, then $N^\star_{\mathbf{0}}(B_0; \varepsilon,t)\ll_{n,\f, B_0} \varepsilon^m e^{(d+1)t}$ when $\varepsilon > e^{\frac{-3t}{2md(2l-1)(n+1)+2n-1}} e^{\frac{(2n+1)\delta t}{2md(2l-1)(n+1)+2n-1}}.$ This is a strictly smaller range of $\varepsilon$ than the range of $\varepsilon$ in Theorem~\ref{mainthm: upperbound}.

\section{Hausdorff dimension and measure results}

In this section we provide refinements of the results stated in \S\ref{Intro} using Hausdorff measures and dimension. To begin with, we recall the following Hausdorff measure generalisation of Khintchine's theorem.

\smallskip

\noindent\textbf{The Inhomogeneous Khintchine-Jarn\'ik theorem:} {\em For any 
non-increasing $\psi$, any $\ta\in\R^n$ and any $0<s\leq n$}
\begin{equation}\label{eq1.1}
    \mathcal{H}^s(\mathcal{S}_n^{\ta}(\psi)\cap [0,1]^n)=\left\{\begin{aligned}
        &0 &\text{ if } \sum_{q=1}^\infty q^{n-s}\psi(q)^s<\infty,\\
        &\mathcal{H}^s([0,1]^n) &\text{ if } \sum_{q=1}^\infty q^{n-s}\psi(q)^s=\infty.
    \end{aligned}\right.
\end{equation}
Here and elsewhere $\mathcal{H}^s$ denotes $s$-dimensional Hausdorff measure. We refer the reader to \cite{Yann2004}, or \cite[Section 12.1]{BDV_memo} or  \cite[\S1.1\;\&\;1.3]{MR4537336} for a detailed historical account and further generalisations. The case of $s=n$ in \eqref{eq1.1} corresponds to the Inhomogeneous Khintchine theorem stated in \S\ref{Intro}. The following two results of this paper are Hausdorff measure generalisations of Theorem~\ref{thm: main}.

\begin{theorem}[Divergence case]\label{main: Haus_div}
Let $n\ge 2$, $\ta\in\R^n$, $\psi$ be a non-increasing function such that $\psi(q)^{1/\eta} q\to\infty$ as $q\to\infty$, where $\eta$ is given by \eqref{def:eta}. Let $d$, $\bU$ and $\f$ be as in Theorem~\ref{thm: main} and $d+m=n$. Then for any $\frac{(n+1)}{1+\eta}-m<s\leq d$, we have that 
     \begin{equation}
         \mathcal{H}^s\left( \f^{-1}(\mathcal{S}_n^{\ta}(\psi))\right)= \mathcal{H}^s(\bU) \text { if } \sum q^n \left(\frac{\psi(q)}{q}\right)^{s+m} =\infty.
     \end{equation}
     In particular, if $\tau(\psi):= \liminf_{q\to\infty} \frac{-\log\psi(q)}{\log q}$ satisfies $ \frac{1}{n}<\tau(\psi)<\eta$, then 
     \begin{equation}
         \dim_H\left(\f^{-1}(\mathcal{S}_n^{\ta}(\psi))\right)\geq \frac{n+1}{\tau(\psi)+1}-m.
     \end{equation}
 \end{theorem}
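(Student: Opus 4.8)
The plan is to deduce Theorem~\ref{main: Haus_div} from the lower bound on rational points near manifolds (Theorem~\ref{mainthm: lower bound}) via a ubiquity argument, exactly as one proves the divergence half of Jarn\'ik-type theorems. The set $\f^{-1}(\mathcal{S}_n^{\ta}(\psi))$ is a $\limsup$ set built from the balls
\[
B\!\left(\bx,\tfrac{\psi(q)}{q\,\ell(\bx,\bp,q)}\right),\qquad \text{where } \ell \text{ accounts for the Lipschitz distortion of } \f^{-1},
\]
centred at the preimages $\bx$ of rational points $\tfrac{\bp+\ta}{q}$ lying within $\tfrac{\psi(q)}{q}$ of $\cM$. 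First I would fix a point $\bx_0$ at which $\f$ is nondegenerate, pass to the neighbourhood $B_0$ furnished by Theorem~\ref{mainthm: lower bound}, and partition the scales $q\in[e^{t-1},e^t]$ dyadically in $t$. The counting estimate $N_{\ta}^\star(B;\varepsilon,t)\gg \varepsilon^m e^{(d+1)t}\Leb_d(B)$ with $\varepsilon\asymp e^{t}\psi(e^t)/e^t=\psi(e^t)$ — valid in the required range precisely because the hypothesis $\psi(q)^{1/\eta}q\to\infty$ forces $\varepsilon\gg e^{-t\eta}$ — says that near $B$ there are $\gg \psi(e^t)^m e^{(d+1)t}\Leb_d(B)$ such rational points at resolution $e^t$. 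This is the input one needs to verify that $\{\bx_0\}$ together with these rational preimages forms a \emph{local ubiquitous system} relative to the function $\rho(t)=\psi(e^t)/e^t$ (or rather with respect to the relevant exponent), in the sense of Beresnevich--Dickinson--Velani.

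The second step is to invoke the mass transference principle / ubiquity-for-Hausdorff-measures machinery (\cite{BDV_memo}), which converts a ubiquitous system into a full-measure statement for the associated $\limsup$ set at the level of $\mathcal{H}^s$. Concretely, ubiquity at resolution $\varepsilon=\psi(e^t)$ with point-count $\asymp \psi(e^t)^m e^{(d+1)t}$ inside a ball of radius $\asymp \psi(e^t)/e^t$ around each preimage gives, after summing the natural ``$s$-volume'' contributions $\sum_t \big(e^{(d+1)t}\psi(e^t)^m\big)\big(\psi(e^t)/e^t\big)^{s} \asymp \sum_q q^{d}\psi(q)^m \big(\psi(q)/q\big)^s q^{-1}=\sum_q q^{n} (\psi(q)/q)^{s+m}$ after using $d+m=n$ and converting the sum over $t$ back to a sum over $q$ via monotonicity — the divergence hypothesis of the theorem. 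Hence $\mathcal{H}^s(\f^{-1}(\mathcal{S}_n^{\ta}(\psi))\cap B_0)=\mathcal{H}^s(B_0)$; a standard covering/local-to-global argument over a countable cover of $\bU$ by such neighbourhoods (a.e. point of $\bU$ being nondegeneracy) upgrades this to $\mathcal{H}^s(\f^{-1}(\mathcal{S}_n^{\ta}(\psi)))=\mathcal{H}^s(\bU)$. The constraint $\tfrac{n+1}{1+\eta}-m<s\le d$ appears exactly as the range in which the ubiquity-to-Hausdorff conversion is legitimate: the lower bound on $s$ is what guarantees $\varepsilon=\psi(e^t)\gg e^{-t\eta}$ can be arranged on a divergent set of scales, while $s\le d$ is forced since the ambient parameter space is $d$-dimensional.

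For the dimension corollary, one applies the measure statement with $s$ slightly below the critical exponent: if $\tau=\tau(\psi)\in(\tfrac1n,\eta)$ then $\psi(q)\approx q^{-\tau}$ along a sequence, the series $\sum_q q^n(\psi(q)/q)^{s+m}$ diverges for every $s<\tfrac{n+1}{\tau+1}-m$, and that value of $s$ lies in the admissible window $(\tfrac{n+1}{1+\eta}-m,\,d]$ precisely because $\tau<\eta$ (lower end) and because $\tfrac{n+1}{\tau+1}-m\le d$ follows from $\tau>\tfrac1n$ together with $d+m=n$; one checks $\tfrac{n+1}{\tau+1}-m<d \iff \tau>\tfrac1n$. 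Letting $s\uparrow \tfrac{n+1}{\tau+1}-m$ gives $\mathcal{H}^s(\f^{-1}(\mathcal{S}_n^{\ta}(\psi)))=\mathcal{H}^s(\bU)>0$, hence $\dim_H\ge \tfrac{n+1}{\tau+1}-m$.

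I expect the main obstacle to be the verification of the ubiquity hypothesis in the precise quantitative form required — in particular, controlling the Lipschitz distortion of $\f^{-1}$ so that the balls around the rational preimages have the correct radii uniformly over $B_0$, and ensuring the rational points counted by $N^\star_{\ta}$ are sufficiently ``spread out'' (a one-sided, lower-bound ubiquity statement suffices here, which is what makes Theorem~\ref{mainthm: lower bound} exactly the right tool, since no upper bound on overlaps is needed for the divergence direction). A secondary technical point is the bookkeeping that translates the dyadic-in-$t$ counting back into a sum over $q$ with the correct weights $q^n(\psi(q)/q)^{s+m}$; this uses monotonicity of $\psi$ in a routine way. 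Once ubiquity is in place, the passage to Hausdorff measure is entirely standard via \cite{BDV_memo}.
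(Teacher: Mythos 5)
Your overall strategy — ubiquity plus the Beresnevich--Dickinson--Velani machinery for Hausdorff measures, with the series $\sum_q q^n(\psi(q)/q)^{s+m}$ emerging from the bookkeeping — is exactly the paper's approach (the paper defers the details to \cite[pp.~196--199]{Ber12}, which is the same ubiquity-to-Hausdorff argument). However, there is a genuine logical gap in the way you propose to verify the ubiquity hypothesis, and it is not the ``secondary technical point'' you treat it as: it is the crux.

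You start from Theorem~\ref{mainthm: lower bound}, i.e.\ the counting lower bound $N_\ta^\star(B;\varepsilon,t)\gg\varepsilon^m e^{(d+1)t}\Leb_d(B)$, and claim this is ``the input one needs to verify \ldots a local ubiquitous system.'' It is not. Ubiquity requires that the union of balls of radius $\rho$ around the rational preimages has measure $\geq \kappa\Leb_d(B)$, i.e.\ that the resonant points are genuinely spread out across $B$. A lower bound on the raw count cannot deliver this: the count could be attained by points all clustered near one corner of $B$, in which case the union of the thickenings has negligible measure. To extract ubiquity from a count you would additionally need an \emph{upper} bound on the number of rational points falling in small subballs (so that the count must be spread out), which you have not supplied and which is not part of Theorem~\ref{mainthm: lower bound}. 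In the paper the logical order is the reverse of the one you propose: the ubiquity statement is Theorem~\ref{main_div_1}, the direct measure lower bound $\Leb_d(\Delta_\ta(t,\varepsilon,B,\rho))\geq\tfrac12\Leb_d(B)$, which is obtained from quantitative nondivergence via Proposition~\ref{key_propo} and the transfer Lemma~\ref{lemma:BVVZ2.1}. The counting statement you are invoking (Theorem~\ref{mainthm: lower bound}) is derived \emph{as a corollary} of that measure bound (via Corollary~\ref{coro: lower} and a covering argument), not the other way round. So your plan, as written, derives the stronger statement from the weaker one and leaves a hole where the quantitative nondivergence input should be. Replacing Theorem~\ref{mainthm: lower bound} with Theorem~\ref{main_div_1} as the input, the rest of your argument — the choice $\varepsilon\asymp\psi(e^t)$, the constraint $\varepsilon\gg e^{-t\eta}$ coming from $\psi(q)^{1/\eta}q\to\infty$, the $s$-volume sum, and the derivation of the dimension corollary — is sound and matches the paper.
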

 
Note that the divergence case of Theorem~\ref{thm: main} is the $s=d$ case of Theorem~\ref{main: Haus_div}. 
\begin{remark}
For $d=1$, Theorem~\ref{main: Haus_div} was previously established in  \cite[Theorem 1.3]{BVVZ2018}, however our proof gives an alternate simpler argument. The main new substance of Theorem~\ref{main: Haus_div} is the case $d>1$ which extends \cite[Theorem~2.5]{Ber12} from analytic to any nondegenerate manifolds, although, when $d>1$, the range of $s$ (resp. the range of $\tau(\psi)$) we have in Theorem~\ref{main: Haus_div} is smaller than that in \cite[Theorem 2.5]{Ber12}.
\end{remark}

\begin{theorem}[Convergence case]\label{Thm: Hausdorff measure}
Let $n\ge2$, $\ta\in\R^n$, $\psi$ be a non-increasing function, $d$, $\bU$ and $\f$ be as in Theorem~\ref{thm: main} and $d+m=n$. Suppose that $s>0$,\begin{equation}\label{eqn: measure condition on degenerate}
        \mathcal{H}^s\{\bx\in \bU~:~ \f \text{  is not } l\text{-nondegenerate at } \bx\}=0\,,
    \end{equation}
    \begin{equation}\label{eqn: conv 2}
        \sum_{q\geq 1} q^n \left(\frac{\psi(q)}{q}\right)^{s+m}<\infty
    \end{equation} 
and 
    \begin{equation}\label{eqn: conv 3}
        \sum_{t\geq 1} \left(\frac{\psi(e^t
)}{e^{t}}\right)^{(s-d)/2} \left(\psi(e^t)^{n-1/2} e^{3t/2}\right)^{-\alpha}<\infty\,,
    \end{equation}
where
\begin{equation}\label{def:alpha}
\alpha=\frac{1}{d(2l-1)(n+1)}.
\end{equation}
Then \begin{equation}
       \mathcal{H}^s\left(\f^{-1}(\mathcal{S}_n^{\ta}(\psi))\right)=0.
    \end{equation} 
\end{theorem}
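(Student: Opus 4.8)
The plan is to deduce the convergence case of the Hausdorff measure statement from the upper bound for counting rational points near $\cM$, namely Theorem~\ref{mainthm: upperbound}, via a standard Hausdorff--Cantelli covering argument. First I would reduce to a local statement: since $\cM$ is covered by countably many coordinate charts, and $\mathcal{H}^s$ is countably subadditive, it suffices to show $\mathcal{H}^s(\f^{-1}(\mathcal{S}_n^\ta(\psi))\cap B_0)=0$ for a neighbourhood $B_0$ of a point $\bx_0$ at which $\f$ is $l$-nondegenerate, and then exhaust the full-measure set of such points. The hypothesis \eqref{eqn: measure condition on degenerate} guarantees the exceptional set where $\f$ fails to be $l$-nondegenerate contributes zero $\mathcal{H}^s$-measure, so we may ignore it.

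Next, I would set up the cover. A point $\bx\in B_0$ with $\f(\bx)\in\mathcal{S}_n^\ta(\psi)$ satisfies $\|\f(\bx)-(\bp+\ta)/q\|\le\psi(q)/q$ for infinitely many $(\bp,q)$, hence for infinitely many dyadic-in-$\log$ scales $t$ with $e^{t-1}\le q\le e^t$. Using monotonicity of $\psi$ one has $\psi(q)/q\le e\,\psi(e^{t-1})/e^{t}\asymp\psi(e^t)/e^t$ up to constants. For each relevant $(\bp,q)$ the set of such $\bx$ near $\cM$ is contained in a neighbourhood of the intersection of $\cM$ with the cube of sidelength $\asymp\psi(e^t)/e^t$ centred at $(\bp+\ta)/q$; since $\cM$ is a $d$-dimensional $C^1$ (indeed $C^l$) submanifold, this piece can be covered by $\asymp\big(e^t/(\psi(e^t)e^t)^{1/2}\big)^{d}$... more precisely by $O\big((\rho_t^{-1}r_t)^{d-1}\big)$ balls of radius $r_t$ where $r_t\asymp(\psi(e^t)/e^t)^{1/2}$ is the natural ``lens'' thickness for a curved manifold and $\rho_t=\psi(e^t)/e^t$; in any case the pulled-back cover in $\bU$ uses $N_\ta^\star(B_0;\varepsilon_t,t)$ many sets, with $\varepsilon_t:=\psi(e^t)$ (up to constants), each of $\f^{-1}$-diameter $\asymp r_t=(\psi(e^t)/e^t)^{1/2}$. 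Then
\begin{equation}\label{eq:HC}
\mathcal{H}^s\big(\f^{-1}(\mathcal{S}_n^\ta(\psi))\cap B_0\big)\ll \liminf_{T\to\infty}\sum_{t\geq T} N_\ta^\star(B_0;\varepsilon_t,t)\,\Big(\tfrac{\psi(e^t)}{e^t}\Big)^{s/2}\cdot\Big(\tfrac{e^t}{1}\Big)^{?}\,,
\end{equation}
where the remaining bookkeeping of powers of $e^t$ must be matched carefully; the intended outcome is that plugging the two-term bound of Theorem~\ref{mainthm: upperbound} for $N_\ta^\star(B_0;\varepsilon_t,t)$ into the sum produces exactly the two series \eqref{eqn: conv 2} and \eqref{eqn: conv 3}. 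The first term $\varepsilon_t^m e^{(d+1)t}$ against the covering weight yields, after the substitution $q\asymp e^t$ and comparing the dyadic sum to the full sum, a tail of $\sum_q q^n(\psi(q)/q)^{s+m}$, which converges by \eqref{eqn: conv 2}; the second term $e^{(d+1)t}(\varepsilon_t^{n-1/2}e^{3t/2})^{-\alpha}$ against the weight yields the tail of $\sum_t (\psi(e^t)/e^t)^{(s-d)/2}(\psi(e^t)^{n-1/2}e^{3t/2})^{-\alpha}$, which converges by \eqref{eqn: conv 3}. Since both tails tend to $0$, the $\liminf$ in \eqref{eq:HC} is $0$, giving $\mathcal{H}^s(\f^{-1}(\mathcal{S}_n^\ta(\psi))\cap B_0)=0$.

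The main obstacle, and the step deserving the most care, is the geometric covering estimate: one must show that the preimage under $\f$ of a ``rational cube'' fattened by $\psi(q)/q$ is efficiently covered by $N_\ta^\star$-many sets of diameter $\asymp(\psi(e^t)/e^t)^{1/2}$ in $\bU$, and that the exponent of $\psi(e^t)/e^t$ produced is exactly $(s-d)/2$ beyond the combinatorial count — this is where the curvature of the nondegenerate manifold enters and where the ``$1/2$'' powers in \eqref{eqn: conv 3} and in Theorem~\ref{mainthm: upperbound} originate. One also needs \eqref{vb2}-type control ensuring $\psi(e^t)/e^t$ is small enough that the lens picture is valid, uniform choice of $B_0$ and $t_0$ from Theorem~\ref{mainthm: upperbound}, and a routine check that replacing $q$ by $e^t$ and dyadic summation by integration costs only constants because $\psi$ is monotone. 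Once the covering lemma is in place, the remainder is the convergence Borel--Cantelli bookkeeping sketched above.
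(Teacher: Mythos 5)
There is a genuine gap: your covering uses a single scale, and this cannot produce both series \eqref{eqn: conv 2} and \eqref{eqn: conv 3}. You cover the limsup set by $N_\ta^\star(B_0;\varepsilon_t,t)$ many sets, each of diameter $\asymp(\psi(e^t)/e^t)^{1/2}$, giving a Hausdorff--Cantelli weight $(\psi(e^t)/e^t)^{s/2}$ per rational point. But the preimage under $\f$ (in Monge form) of the cube of side $\psi(q)/q$ around $(\bp+\ta)/q$ lies in a ball of radius $\psi(q)/q\asymp\psi(e^t)/e^t$, so the natural covering set per rational point has the much smaller radius $\psi(e^t)/e^t$, giving weight $(\psi(e^t)/e^t)^s$. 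Plugging in the first term $\varepsilon^m e^{(d+1)t}$ of Theorem~\ref{mainthm: upperbound}, your weight gives $\psi(e^t)^{m+s/2}e^{t(d+1-s/2)}$, whereas \eqref{eqn: conv 2} (in dyadic form, recalling $n+1-m=d+1$) is $\psi(e^t)^{m+s}e^{t(d+1-s)}$; the ratio is $(\psi(e^t)/e^t)^{-s/2}\to\infty$, so your bound is strictly weaker and does not reduce to the stated hypothesis. There is no exponent ``$?$'' on $e^t$ that fixes this, because the discrepancy also lives in the powers of $\psi$.

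More fundamentally, your plan relies only on the \emph{count} from Theorem~\ref{mainthm: upperbound}, but the proof needs the underlying two-scale \emph{decomposition}, namely Proposition~\ref{propo: main }. The paper separates $B_0$ into a nongeneric part $\mathfrak{M}^{new}(\varepsilon,t)$ (which by construction is a union of balls of radius $\varepsilon^{1/2}e^{-t/2}\asymp(\psi(e^t)/e^t)^{1/2}$ and has Lebesgue measure $\ll(\varepsilon^{n-1/2}e^{3t/2})^{-\alpha}$) and a generic part on which each rational point is covered by a \emph{single} ball of radius $\psi(e^t)/e^t$. The small-radius balls on the generic part yield exactly \eqref{eqn: conv 2}; the large-radius balls covering $\mathfrak{M}^{new}$, weighted by $(\varepsilon^{1/2}e^{-t/2})^{s-d}\Leb_d(\mathfrak{M}^{new})$, yield exactly \eqref{eqn: conv 3}. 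The ``$1/2$'' exponents indeed trace back to curvature, as you intuit, but they govern the thickening radius of the nongeneric region, not the covering radius of a rational cube. The count $N_\ta^\star$ alone does not distinguish rational points that are spatially spread out (coverable cheaply at scale $\psi/e^t$) from those that cluster inside $\mathfrak{M}^{new}$; that spatial information is precisely what Proposition~\ref{propo: main } encodes and what your proposal is missing. Your intermediate formula with $O\big((\rho_t^{-1}r_t)^{d-1}\big)$ balls is also internally inconsistent with the claim that the cover has only $N_\ta^\star$ sets, and since $r_t\gg\rho_t$ one ball of radius $r_t$ already covers a $\rho_t$-cube, so that factor should be $1$; this again signals that the lens picture you invoke does not apply to the individual rational cubes.
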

\begin{remark}
    It should be noted that even in the case of $\ta=\mathbf{0}$, the above theorem is new. It removes the smoothness condition in \cite[Theorem 1.11]{SST} and also improves the summation condition in \cite[Equation 1.15]{SST}. 
\end{remark}

Next, we have the following corollary that follows on combining Theorem~\ref{Thm: Hausdorff measure} and Theorem~\ref{main: Haus_div}. It improves the range of $\tau$ in \cite[Corollary 2.9]{BY}, removes the smoothness assumption from 
\cite[Corollary 1.13]{SST}, and extends these results to any $\ta\in\R^n$.   We write $ \mathcal{S}_{n}^{\ta}(\tau)$ for $\mathcal{S}_{n}^{\ta}(\psi)$ with $\psi(q)= q^{-\tau}.$

\begin{corollary}[Exact Hausdorff dimension]\label{exact}
    Let $n\geq 2$, $m$, $d$, $\f$ and $\alpha$ be as before, and  \begin{equation}
        \dim_{H}(\{\bx\in\bU~|~\f \text{  is not } l\text{-nondegenerate at } \bx\})\leq \frac{n+1}{\tau+1}-m,
    \end{equation}
where
    \begin{equation}\label{bad range of tau}
        \frac1n<\tau <  \frac{3\alpha+1}{\alpha(2n-1)+n}\,.
    \end{equation}
    Then 
    \begin{equation}
\dim_H(\f^{-1}(\mathcal{S}_{n}^{\ta}(\tau)))= \frac{n+1}{\tau+1}-m.
    \end{equation}
\end{corollary}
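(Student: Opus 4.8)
The plan is to derive Corollary~\ref{exact} by combining the upper bound for the Hausdorff dimension coming from the convergence result (Theorem~\ref{Thm: Hausdorff measure}) with the matching lower bound coming from the divergence result (Theorem~\ref{main: Haus_div}), applied to the monotonic function $\psi(q)=q^{-\tau}$. Throughout, write $s_0:=\frac{n+1}{\tau+1}-m$; the claim is that $\dim_H\big(\f^{-1}(\mathcal{S}_n^{\ta}(\tau))\big)=s_0$. First I would check that the hypothesis $\frac1n<\tau<\frac{3\alpha+1}{\alpha(2n-1)+n}$ forces $0<s_0\le d$ and, moreover, places $s_0$ inside the admissible ranges of both theorems; in particular $\tau<\eta$ is needed for the lower bound and one should verify that $\frac{3\alpha+1}{\alpha(2n-1)+n}\le\eta$ holds for the two cases of $\eta$ in \eqref{def:eta} (a short algebraic check using $\alpha=\frac1{d(2l-1)(n+1)}$), and that $\tau>\frac1n$ guarantees $s_0<d+1$ as well as $\psi(q)^{1/\eta}q=q^{1-\tau/\eta}\to\infty$.

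For the lower bound $\dim_H\ge s_0$, I would invoke Theorem~\ref{main: Haus_div}. For $\psi(q)=q^{-\tau}$ one has $\tau(\psi)=\tau$, so for any $s<s_0$ the series $\sum_q q^n(\psi(q)/q)^{s+m}=\sum_q q^{n-(s+m)(\tau+1)}$ diverges precisely because $s<s_0$ means $(s+m)(\tau+1)<n+1$, i.e. the exponent is $\ge-1$. Since such $s$ can be taken in the range $\frac{n+1}{1+\eta}-m<s\le d$ (using $\tau<\eta$ and $s_0\le d$), Theorem~\ref{main: Haus_div} gives $\mathcal{H}^s(\f^{-1}(\mathcal{S}_n^{\ta}(\tau)))=\mathcal{H}^s(\bU)>0$ for all $s<s_0$, hence $\dim_H\ge s_0$. (The displayed dimension bound in Theorem~\ref{main: Haus_div} already gives this directly, but I would spell out the series computation so the endpoint case is transparent.)

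For the upper bound $\dim_H\le s_0$, I would apply Theorem~\ref{Thm: Hausdorff measure} with any $s>s_0$, $s$ close to $s_0$; it suffices to show $\mathcal{H}^s(\f^{-1}(\mathcal{S}_n^{\ta}(\tau)))=0$ for all such $s$. The nondegeneracy hypothesis \eqref{eqn: measure condition on degenerate} follows from the dimension hypothesis $\dim_H(\{\f\text{ not }l\text{-nondegenerate}\})\le s_0<s$, which forces $\mathcal{H}^s$ of that set to vanish. Condition \eqref{eqn: conv 2} is the convergence of $\sum_q q^{n-(s+m)(\tau+1)}$, which holds exactly when $s>s_0$. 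The remaining condition \eqref{eqn: conv 3} becomes $\sum_t e^{-t(\tau+1)(s-d)/2}\,e^{-\alpha t(3/2-\tau(n-1/2))}<\infty$; this is a geometric-type series in $e^{-t}$, convergent iff $(\tau+1)(s-d)/2+\alpha(3/2-\tau(n-1/2))>0$. Since $s\le d$ the first term is $\le0$, so this is where the upper restriction on $\tau$ in \eqref{bad range of tau} enters: rearranging $\frac32\alpha-\alpha\tau(n-\tfrac12)+\frac{(\tau+1)(s-d)}2>0$ and letting $s\downarrow s_0$ (using $s_0-d=\frac{n+1}{\tau+1}-(m+d)=\frac{n+1}{\tau+1}-n$, so $(\tau+1)(s_0-d)=n+1-n(\tau+1)=1-n\tau$) reduces the inequality to $\frac32\alpha-\alpha\tau(n-\tfrac12)+\tfrac12(1-n\tau)>0$, i.e. $3\alpha+1>\tau(\alpha(2n-1)+n)$, which is exactly the stated bound $\tau<\frac{3\alpha+1}{\alpha(2n-1)+n}$. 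By continuity one can then pick $s>s_0$ close enough to $s_0$ so that the strict inequality still holds, giving \eqref{eqn: conv 3} and hence $\mathcal{H}^s(\f^{-1}(\mathcal{S}_n^{\ta}(\tau)))=0$; since this holds for a sequence of $s$ decreasing to $s_0$, we get $\dim_H\le s_0$. Combining the two bounds yields the corollary.

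The main obstacle is purely bookkeeping: one must make sure the single hypothesis \eqref{bad range of tau} simultaneously delivers every inequality needed --- $s_0\in(0,d]$, $\tau<\eta$, the divergence of the Jarn\'ik series below $s_0$, and the strict positivity of the exponent in \eqref{eqn: conv 3} after letting $s\downarrow s_0$ --- and that the endpoint $s=s_0$ is handled by approaching it from the appropriate side in each of the two theorems. The one genuinely non-formal point to double-check is the compatibility $\frac{3\alpha+1}{\alpha(2n-1)+n}\le\eta$ with $\eta$ as in \eqref{def:eta}, ensuring the two $\tau$-ranges overlap in the claimed interval; this is a finite computation in $n,d,l$ that I would carry out explicitly.
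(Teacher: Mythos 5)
Your proposal is correct and follows essentially the same route as the paper: the upper bound on $\dim_H$ comes from Theorem~\ref{Thm: Hausdorff measure} applied to $\psi(q)=q^{-\tau}$ with $s\downarrow s_0$, and the lower bound from Theorem~\ref{main: Haus_div} after checking $\tau<\eta$. You simply carry out explicitly the bookkeeping (in particular deriving the threshold $\tau<\frac{3\alpha+1}{\alpha(2n-1)+n}$ as exactly the positivity of the exponent in \eqref{eqn: conv 3} at $s=s_0$, and noting the needed compatibility $\frac{3\alpha+1}{\alpha(2n-1)+n}\le\eta$) that the paper delegates to the analogous computations in \cite{BY} and \cite{SST} and to a brief change of variables.
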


\subsection{Spectrum of inhomogeneous exponents}
We discuss the implications of our results for the inhomogeneous version of a problem by Bugeaud and Laurent regarding the Diophantine exponents of points on the Veronese curves $(x,x^2,\dots,x^n)$. Given an $x\in \R$ and $\ta\in\R^n$, first recall the definition of inhomogeneous exponents from \cite{BLmos}:
\begin{equation}
    \lambda_n^{\ta}(x):=\sup \{\tau~|~(x,x^2,\dots, x^n)\in \mathcal{S}_{n}^{\ta}(\tau)\}.
\end{equation}
Define 
$$\Spec^{\ta}(\lambda_n)=\left\{\lambda\in \left[0, \infty\right]~|~\lambda_{n}^{\ta}(x)=\lambda \text{ for some } x\in \R\right\}.$$ Note that when $\ta=\mathbf{0}$, $\lambda^{\mathbf{0}}_n(\frac{p}{q})=\infty$ for every $\frac{p}{q}\in\Q$ and  by Dirichlet's theorem $\lambda_n^{\mathbf{0}}(x)\geq 1/n$ for every $x\in \R.$ Thus the definition in \cite{BL05} and \cite{BY} matches with the above one when $\ta=\mathbf{0}.$ By Theorem~\ref{thm: main} we get that for almost every $x\in\R$, $\lambda_n^{\ta}(x)=\frac{1}{n}.$ Thus we know $\frac{1}{n}\in \Spec^{\ta}(\lambda_n).$ The following is a special case of Corollary \ref{exact} for $n\geq 2$, and $m=n-1$, $d=1$. Then for $\f(x)=(x,\dots, x^n),$
and $\tau$ satisfying 
    \begin{equation}
        \frac{1}{n}\leq \tau <  \frac{2n^2+n+2}{(n^2+n+1)(2n-1)},
    \end{equation}
  we have
    \begin{equation}
        \dim_H(\{\bx\in\bU~:~\f(\bx)\in \mathcal{S}_n^{\ta}(\tau)\})= \frac{n+1}{\tau+1}-n+1.
    \end{equation}
    This in particular gives the following corollary.
\begin{corollary}
     For every $n\geq 3,$
    \begin{equation}
        \left[ \frac{1}{n}, \frac{1}{n}+\frac{n+1}{n(2n-1)(n^2+n+1)}\right]\subset \Spec^{\ta}(\lambda_n).
    \end{equation}
\end{corollary}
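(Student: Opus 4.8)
The plan is to derive this corollary directly from the immediately preceding displayed assertion (the special case of Corollary~\ref{exact} with $d=1$, $m=n-1$, applied to the Veronese curve $\f(x)=(x,\dots,x^n)$). First I would observe that for any $\tau$ in the admissible range $\tfrac1n\le\tau<\tfrac{2n^2+n+2}{(n^2+n+1)(2n-1)}$, the set $\{x\in\R:\f(x)\in\mathcal{S}_n^{\ta}(\tau)\}$ has positive Hausdorff dimension (indeed $\dim_H=\frac{n+1}{\tau+1}-(n-1)>0$ for $\tau$ in this range, since $\tau<1$ forces $\frac{n+1}{\tau+1}>\frac{n+1}{2}\ge n-1$ when $n\le 3$... more carefully, $\frac{n+1}{\tau+1}-(n-1)>0\iff \tau<\frac{2}{n-1}$, which holds throughout the range). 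In particular the set is nonempty, so there exists $x\in\R$ with $\f(x)\in\mathcal{S}_n^{\ta}(\tau)$, i.e.\ $\lambda_n^{\ta}(x)\ge\tau$. This already shows every such $\tau$ is $\le$ some exponent, but to get $\tau$ itself \emph{in} the spectrum I need an $x$ with $\lambda_n^{\ta}(x)$ \emph{equal} to $\tau$, not just $\ge\tau$.

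The key step is a nested-intersection / dimension argument to pin the exponent exactly. For a fixed target value $\tau_0$ in the range, consider the set
\[
E(\tau_0):=\{x:\f(x)\in\mathcal{S}_n^{\ta}(\tau_0)\}\setminus\bigcup_{k\ge1}\{x:\f(x)\in\mathcal{S}_n^{\ta}(\tau_0+1/k)\}.
\]
Every $x\in E(\tau_0)$ has $\lambda_n^{\ta}(x)=\tau_0$ exactly. It remains to see $E(\tau_0)\ne\emptyset$. By the displayed assertion, $\dim_H\{x:\f(x)\in\mathcal{S}_n^{\ta}(\tau_0)\}=\frac{n+1}{\tau_0+1}-(n-1)$, while for each $k$, provided $\tau_0+1/k$ is still below the upper threshold, $\dim_H\{x:\f(x)\in\mathcal{S}_n^{\ta}(\tau_0+1/k)\}=\frac{n+1}{\tau_0+1/k+1}-(n-1)$, which is \emph{strictly smaller}. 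Hence the countable union being removed has strictly smaller Hausdorff dimension than the full set (using countable stability of $\dim_H$ and the monotone convergence of the thresholds), so $E(\tau_0)$ is nonempty --- in fact of full dimension $\frac{n+1}{\tau_0+1}-(n-1)$. This gives $\tau_0\in\Spec^{\ta}(\lambda_n)$ for every $\tau_0$ with $\tfrac1n\le\tau_0<\tfrac{2n^2+n+2}{(n^2+n+1)(2n-1)}$. A subtlety at the right endpoint: for $\tau_0$ equal to (or within $1/k$ of) the threshold $\tfrac{2n^2+n+2}{(n^2+n+1)(2n-1)}$ the exact-dimension formula may not apply to $\mathcal{S}_n^{\ta}(\tau_0+1/k)$, but here I would instead use the convergence case (Theorem~\ref{Thm: Hausdorff measure}, or merely the upper-bound half via a Borel--Cantelli argument as in Theorem~\ref{thm: main}) to bound $\dim_H\{x:\f(x)\in\mathcal{S}_n^{\ta}(\tau')\}$ from above for all $\tau'$ slightly larger, which still gives a strict dimension drop.

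Finally, I would translate the range $\tfrac1n\le\tau<\tfrac{2n^2+n+2}{(n^2+n+1)(2n-1)}$ of exponents realised into the claimed interval of \emph{spectrum values}. Since the spectrum is the set of realised $\lambda_n^{\ta}(x)$, and we have just shown every $\tau$ in that half-open interval is realised, the spectrum contains $[\tfrac1n,\,\tfrac{2n^2+n+2}{(n^2+n+1)(2n-1)})$; a short arithmetic simplification shows $\tfrac{2n^2+n+2}{(n^2+n+1)(2n-1)}-\tfrac1n=\tfrac{n+1}{n(2n-1)(n^2+n+1)}$, i.e.\ the right endpoint equals $\tfrac1n+\tfrac{n+1}{n(2n-1)(n^2+n+1)}$, and for $n\ge3$ one checks directly that $\tfrac1n$ itself lies below the upper threshold so the interval is non-trivial. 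The closedness at the right endpoint in the stated corollary can be recovered because the endpoint value is the supremum of realised exponents and, by the same nested argument applied with the endpoint as target (handled via the convergence-case upper bound as above), it is itself realised. The main obstacle in this plan is the endpoint bookkeeping --- ensuring a genuine strict Hausdorff-dimension drop for approximation order $\tau_0+1/k$ precisely when $\tau_0$ sits at or near the top of the admissible window, which is why the convergence-side input (Theorem~\ref{Thm: Hausdorff measure}) is needed alongside the divergence-side exact-dimension formula.
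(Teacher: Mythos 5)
Your overall strategy is the right one and is essentially what the paper intends (the paper's own "proof" is the one‑line remark that the displayed exact‑dimension formula gives the corollary). In particular the decomposition
$E(\tau_0)=\{x:\f(x)\in\mathcal{S}_n^{\ta}(\tau_0)\}\setminus\bigcup_{k\ge1}\{x:\f(x)\in\mathcal{S}_n^{\ta}(\tau_0+1/k)\}$
correctly isolates exactly those $x$ with $\lambda_n^{\ta}(x)=\tau_0$. However, the nonemptiness step has a genuine gap. Writing $A_k=\{x:\f(x)\in\mathcal{S}_n^{\ta}(\tau_0+1/k)\}$ and $D=\tfrac{n+1}{\tau_0+1}-(n-1)$, countable stability of Hausdorff dimension gives $\dim_H\big(\bigcup_k A_k\big)=\sup_k\dim_H(A_k)=\sup_k\big(\tfrac{n+1}{\tau_0+1/k+1}-(n-1)\big)=D$, which is the \emph{same} as $\dim_H$ of the full set, not strictly smaller; comparing dimensions therefore cannot, on its own, show $E(\tau_0)\ne\emptyset$. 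The repair is to argue with Hausdorff \emph{measure} at the critical exponent $D$: each $A_k$ has $\dim_H(A_k)<D$, hence $\mathcal{H}^{D}(A_k)=0$, hence $\mathcal{H}^{D}\big(\bigcup_k A_k\big)=0$ by countable subadditivity; on the other hand the divergence part, Theorem~\ref{main: Haus_div}, gives $\mathcal{H}^{D}\big(\f^{-1}(\mathcal{S}_n^{\ta}(\tau_0))\big)=\mathcal{H}^{D}(\bU)=\infty$ when $D<1$ (i.e.\ $\tau_0>\tfrac1n$). Thus $\mathcal{H}^{D}(E(\tau_0))=\infty$ and in particular $E(\tau_0)\ne\emptyset$.

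Two further remarks. Your endpoint treatment is not sound as stated: the exact‑dimension formula (via Corollary~\ref{exact}) holds only for $\tau<\tau^{*}:=\tfrac{2n^2+n+2}{(2n-1)(n^2+n+1)}$, and for $\tau_0=\tau^{*}$ the removal sets $A_k$ live at exponents $\tau^{*}+1/k>\tau^{*}$, where a direct computation shows that condition~\eqref{eqn: conv 3} of Theorem~\ref{Thm: Hausdorff measure} with $s=D^{*}$ is exactly borderline at $\tau^{*}$ and fails for every $\tau'>\tau^{*}$; so invoking the convergence case there does \emph{not} give the $\mathcal{H}^{D^{*}}$‑null bound you need. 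The closed right endpoint requires an additional argument beyond what the displayed assertion supplies. Finally, $\tau_0=\tfrac1n$ also falls outside the open range of Corollary~\ref{exact}; that endpoint lies in $\Spec^{\ta}(\lambda_n)$ for the much simpler reason, noted in the paper, that Theorem~\ref{thm: main} already forces $\lambda_n^{\ta}(x)=\tfrac1n$ for Lebesgue‑almost every $x$, and your write‑up should separate that case out cleanly rather than fold it into the dimension argument.
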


\bigskip

\section{Sharp Quantitative Nondivergence}\label{SQN}

The proofs of the main result will rely on an effective upper bound for the measure of the set $\mathfrak{S}_{\f}(\delta,K,\bT)$ given by
\begin{equation}\label{eqn5.1}
\left\{\bx\in \bU:\exists\;(a_0,\ba)\in\Z\times\Z^n_{\neq\bf0}\;\;\text{such that }\left.
\begin{array}{l}
|a_0+\f(\bx)\ba^\top|<\delta\\[1ex]
\|\nabla \f(\bx)\ba^\top\|<K\\[1ex]
\vert a_i\vert<T_i ~~(1\leq i\leq n) 
\end{array}
\right.\right\},
\end{equation}
where $\ba=(a_1,\dots,a_n)$, $\bT=(T_1,\dots,T_n)$, $\delta,K,T_1,\cdots, T_n$ are positive real parameters, $\f:\bU\to\R^n$ is a map defined on an open set $\bU\subset\R^d$, 
$\nabla$ stands for the gradient of a real-valued function, $\Vert \cdot\Vert$ denotes the sup norm and $\ba^\top$ denotes the transpose of $\ba$. An upper bound on the measure of this set was already established in \cite{BKM}; here we provide an improved estimates which is crucial in order to get a larger range of exponents $\tau(\psi)$ in Theorem~\ref{main: Haus_div} than we could get otherwise. We prove the following theorem which is of independent interest.

\begin{theorem}\label{BKM_new}
Let $\bU\subset\R^d$ be open, $\bx_0\in\bU$ and $\f:\bU\to\R^n$ be $l$--nondegenerate at $\bx_0$. Write $n=m+d$ and suppose that
\begin{equation}\label{Monge0}
\f=(x_1,\dots,x_d,f_1(\bx),\dots,f_m(\bx)),\quad\text{where }\bx=(x_1,\dots,x_d).
\end{equation}
Then there exists a ball $B_0\subset\bU$ centred at $\bx_0$ and a constant $E>0$ depending on $B_0$ and $\f$ such that for any choice of $\delta,K,T_1,\cdots, T_n$ satisfying 
\begin{equation}\label{eqn5.2}
0<\delta\le 1,\qquad T_1,\dots,T_n\ge1,\qquad K>0 \quad\text{and}\quad \delta^{n}<K \frac{T_1\cdots T_n}{\max_i T_i}
\end{equation}
and any ball $B\subset B_0$ of radius $0<r\le1$ we have that
\begin{equation}\label{eqn5.3}
\begin{array}[b]{l}
\displaystyle\rule{0ex}{3ex}\Leb_d\big(\mathfrak{S}_{\f}(\delta,K,\bT)\cap B\big)\ll\;\\[1ex] \displaystyle\ll \delta \prod_{i=1}^d\min\{K,T_i\}\cdot(T_{d+1}\cdots T_n)\Leb_d(B)+ E\left(\delta \min\{K,r^{-1}\}\frac{T_1\cdots T_n}{\max_i T_i}\right)^{\alpha},
\end{array}
\end{equation}
where $\alpha$ is given by \eqref{def:alpha} and the implied constant depends on $l$, $m$ and $d$ only.
\end{theorem}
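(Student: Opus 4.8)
The plan is to recast the set \eqref{eqn5.1} as a statement about short vectors in a family of lattices and to run the Kleinbock--Margulis quantitative nondivergence machinery, essentially following \cite{BKM} but tracking constants carefully enough to separate the ``trivial'' volume term from the error term in \eqref{eqn5.3}. Exploiting the Monge form \eqref{Monge0}, I would first build the $(n+1)\times(n+1)$ unitriangular matrix-valued map $U(\bx)$ whose action on a column $(a_0,\ba)^\top$ returns, in order, $a_0+\f(\bx)\ba^\top$, the $d$ components of $\nabla\f(\bx)\ba^\top$, and $a_{d+1},\dots,a_n$; concretely its first row is $(1,\f(\bx))$ and its last $m$ rows are the standard basis vectors $\be_{d+2},\dots,\be_{n+1}$. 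Setting $g=g_{\delta,K,\bT}=\diag(\delta^{-1},K^{-1},\dots,K^{-1},T_{d+1}^{-1},\dots,T_n^{-1})$ one gets the inclusion
\[
\mathfrak{S}_\f(\delta,K,\bT)\cap B\ \subseteq\ \{\bx\in B:\ g\,U(\bx)\,\Z^{n+1}\text{ contains a nonzero vector of sup-norm }<1\},
\]
while the extra constraints $|a_i|<T_i$ for $i\le d$, discarded by this inclusion, will be reinstated in the direct count below to produce the factors $\min\{K,T_i\}$.

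Next I would, after shrinking to a ball $B_0\ni\bx_0$, apply the Kleinbock--Margulis estimate to $h(\bx):=g\,U(\bx)$ on an arbitrary $B\subset B_0$, which requires verifying two things with constants uniform on $B_0$. First, the \emph{$(C,\alpha)$-good property}: for each $1\le j\le n$, every coordinate of $(gU(\bx))^{\wedge j}\bw$ with $\bw\in\wedge^j\Z^{n+1}$ is a positive scalar times a linear combination of $j\times j$ minors of $U(\bx)$, and each such minor is a sum of products of at most $n+1$ entries of $U(\bx)$ -- each entry a coordinate of $\f$ or a first partial of one -- hence a $C^l$ function whose effective degree is bounded in terms of $l$ and $n$. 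Since $\f$ is $l$-nondegenerate at $\bx_0$, no nonzero such linear combination vanishes to excessively high order at $\bx_0$; a compactness argument over the sphere of coefficient vectors converts this into a uniform lower bound for some low-order derivative, and the Kleinbock--Margulis lemma on $(C,\alpha)$-goodness then yields that all these functions are $(C,\alpha)$-good on $B_0$ with $\alpha$ as in \eqref{def:alpha}. Second, the \emph{nondivergence} condition: for every rational subspace $\{0\}\ne W\subsetneq\R^{n+1}$ one must bound $\sup_{\bx\in B}\|(gU(\bx))^{\wedge\dim W}\bw_W\|$ from below by a parameter $\rho$. Here the explicit shape of $U(\bx)$, the compatibility hypothesis \eqref{eqn5.2} (which keeps the scale $\asymp 1$ inside the admissible range of the estimate and rules out the full-rank direction as the obstruction), and again the $l$-nondegeneracy -- which forbids the $\f$-dependent minors from remaining uniformly small across a ball of radius $r$, and is precisely where $r$ enters through $\min\{K,r^{-1}\}$ -- combine to let one take $\rho$ of the order of a fixed power of $\delta\min\{K,r^{-1}\}\,T_1\cdots T_n/\max_iT_i$, modulo the contribution of ``generic'' integer vectors.

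It remains to assemble the two estimates, splitting according to whether the short lattice vector is ``non-generic'' -- its line trapped in a proper obstruction subspace, so that its $\ba$-part has tangential gradient $\nabla\f(\bx)\ba^\top$ of size $\lesssim\min\{K,r^{-1}\}$ -- or ``generic''. On the non-generic part the Kleinbock--Margulis theorem bounds the measure by a constant times $(1/\rho)^\alpha\Leb_d(B)$, which, with the above $\rho$ and after collecting the $\f$- and $B_0$-dependent constants into $E$, is the second term of \eqref{eqn5.3}. The generic part is handled by a direct estimate: for a fixed admissible generic $\ba$ the set of $\bx\in B$ with $|a_0+\f(\bx)\ba^\top|<\delta$ and $\|\nabla\f(\bx)\ba^\top\|<K$ has measure $\ll\delta\,\Leb_d(B)$ (the linear form $\f(\cdot)\ba^\top$ varying by $\gtrsim1$ across $B$ in the generic regime, so that $\|\f(\bx)\ba^\top+a_0\|_{\R/\Z}<\delta$ cuts out only that much), and the number of admissible $\ba$ is $\ll\prod_{i=1}^d\min\{K,T_i\}\cdot T_{d+1}\cdots T_n$ -- the $a_{d+j}$ range over boxes of sizes $T_{d+j}$, after which each $a_i$ with $i\le d$ is pinned by the gradient condition to a window of length $\asymp K$ and by hypothesis to one of length $<2T_i$. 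Multiplying and summing over $\ba$ gives the first term of \eqref{eqn5.3}, with the elementary but slightly delicate bookkeeping needed to keep its implied constant depending on $l,m,d$ only.

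The heart of the argument, and the main obstacle, is the second step. Establishing the $(C,\alpha)$-good property \emph{uniformly} over all Plücker coordinates with the precise exponent \eqref{def:alpha} is where the nondegeneracy of $\f$ is genuinely exploited; it rests on the bounded-degree structure of the minors of $U(\bx)$ together with the compactness argument making $C$ independent of $W$, and is the analogue here of the central estimate of \cite{BKM}. The other delicate point is pinning down the sharp nondivergence parameter $\rho$, and in particular extracting the factor $\min\{K,r^{-1}\}$ -- the refinement over \cite{BKM} -- which requires quantifying how the radius $r$ constrains the oscillation over $B$ of the tangential coordinates of any lattice vector that could stay short throughout $B$. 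Everything else is bookkeeping with the Kleinbock--Margulis estimate and an elementary lattice-point count.
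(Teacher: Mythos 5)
Your high-level architecture matches the paper's: decompose $\mathfrak{S}_{\f}(\delta,K,\bT)$ according to whether the gradient $\|\nabla\f(\bx)\ba^\top\|$ is below the threshold $\min\{K,r^{-1}\}$ (handled by the BKM quantitative nondivergence estimate with $K$ replaced by $r^{-1}$ -- what you sketch as re-deriving, and the paper cites as Theorem~\ref{thm:KM}) or above it (handled by a direct sum over $\ba$). Your count $\ll\prod_{i=1}^d\min\{K,T_i\}\cdot T_{d+1}\cdots T_n$ of admissible $\ba$ from the Monge form is also correct and is exactly how the paper proceeds.

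However, there is a genuine gap in the generic step, which is precisely the one new ingredient the paper has to supply. You claim that for a fixed $\ba$ with $\|\nabla\f(\bx)\ba^\top\|\gtrsim r^{-1}$, the set of $\bx\in B$ with $|a_0+\f(\bx)\ba^\top|<\delta$ for some $a_0\in\Z$ has measure $\ll\delta\,\Leb_d(B)$ because ``the linear form varies by $\gtrsim 1$ across $B$.'' That reasoning is insufficient: even with a pointwise lower bound on $|\nabla F|$ where $F(\bx)=\f(\bx)\ba^\top$, $F$ is not monotone along coordinate lines, and each time it passes near an integer it contributes a piece of measure $\sim\delta/|F'|$, so the total measure is controlled only once the \emph{number} of such passes is bounded. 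Bounding that number is exactly the content of Proposition~\ref{prop5.1} (after a Fubini reduction to one variable): using $l$-nondegeneracy one finds $k\le l$ with $F^{(k)}$ bounded away from zero, so $F'$ and $F''$ have $O(k)$ zeros, the interval splits into $O(k)$ monotone pieces, and only on each piece does your ``varies by $\gtrsim 1$ so cuts out $\ll\delta$'' argument go through, and even then via a nontrivial telescoping sum of $|F'|^{-1}$; this is then assembled into Theorem~\ref{thm:BKM+}. Without this, your direct estimate does not close. (A smaller omission: for $\|\ba\|$ below the threshold in \eqref{eq5.5+} that argument is inapplicable, and the paper absorbs the contribution of those $\ba$ into the error term via a second application of Theorem~\ref{thm:KM} with a fixed cutoff $T_0$; your sketch does not address this range.)
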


Theorem~\ref{BKM_new} can be regarded as a refinement of the following simplified version of Theorem~1.4 from \cite{BKM}, which is also used in \cite{BY} in this form.

\begin{theorem}[{\cite[Theorem~1.4]{BKM}}]\label{thm:KM}
Let $\bU\subset\R^d$ be open, $\bx_0\in\bU$ and $\f:\bU\to\R^n$ be $l$--nondegenerate at $\bx_0$. Then there exists a ball $B_0\subset\bU$ centred at $\bx_0$ and a constant $E\ge1$ such that for any choice of $\delta,K,T_1,\cdots,T_n$ satisfying \eqref{eqn5.2}
we have that
\begin{equation}\label{eqn5.4}
\Leb_d\big(\mathfrak{S}_{\f}(\delta,K,\bT)\cap B_0\big)\;\le\;E \left(\delta K\frac{T_1\cdots T_n}{\max_i T_i}\right)^{\alpha}\Leb_d(B_0)\,,
\end{equation}
where $\alpha$ is given by \eqref{def:alpha}.
\end{theorem}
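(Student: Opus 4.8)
Since this statement is quoted verbatim (in simplified form) from \cite[Theorem~1.4]{BKM}, no new argument is strictly needed; for the reader's convenience, and to situate it against Theorem~\ref{BKM_new}, I would recall the strategy of \cite{KM,BKM}. The plan is to pass from the combinatorial set $\mathfrak{S}_\f(\delta,K,\bT)$ to a dynamical statement on the space of unimodular lattices in $\R^{n+1}$. First I would attach to each $\bx\in B_0$ the lattice $\Lambda_{\bx}:=g\,u_\f(\bx)\,\Z^{n+1}$, where $u_\f(\bx)$ is the unipotent matrix built from the entries of $\f(\bx)$ and $g=g(\delta,\bT)$ is a diagonal matrix with $\det g=1$ whose entries are prescribed monomials in $\delta,T_1,\dots,T_n$. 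Writing $F_{a_0,\ba}(\by):=a_0+\f(\by)\ba^{\top}$, so that $\nabla F_{a_0,\ba}=\nabla\f\,\ba^{\top}$, the first two conditions defining $\mathfrak{S}_\f(\delta,K,\bT)$ say that $F_{a_0,\ba}$ and its gradient are simultaneously small at $\bx$; combined with $|a_i|<T_i$ they force the vector $g u_\f(\bx)(a_0,\ba)^{\top}$ to be short, so that $\bx\in\mathfrak{S}_\f(\delta,K,\bT)$ implies that $\Lambda_{\bx}$ has a nonzero vector of length below an explicit threshold $\rho_0$ which is a fixed power of $\delta K\frac{T_1\cdots T_n}{\max_i T_i}$; the constraint $\delta^{n}<K\frac{T_1\cdots T_n}{\max_i T_i}$ in \eqref{eqn5.2} is precisely what makes the various thresholds consistent.

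The measure of such $\bx$ is then bounded by the quantitative nondivergence theorem of Kleinbock--Margulis \cite[Theorem~5.2]{KM}, which requires two inputs on a fixed dilate $\hat B_0$ of $B_0$. The first is the $(C,\alpha)$--good property: each function $\bx\mapsto\|g u_\f(\bx)\,\be_{\Gamma}\|$, where $\Gamma$ runs over primitive subgroups of $\Z^{n+1}$ and $\be_{\Gamma}$ is the wedge of a $\Z$--basis of $\Gamma$, must be $(C,\alpha)$--good on $\hat B_0$ with $\alpha$ as in \eqref{def:alpha}. This rests on: (a) since $\f$ is $l$--nondegenerate at $\bx_0$, after shrinking $B_0$ every linear combination of $1$, the $f_i$, and their first-order partial derivatives lies in a space of $(C_0,\tfrac1{d(2l-1)})$--good functions, which is the basic estimate of \cite[\S3]{BKM}; and (b) the components of $\be_{\Gamma}$ are sums of minors, hence polynomials of degree at most $n+1$ in those basic functions, a product of $k$ functions that are $(C_0,\alpha_0)$--good is $(kC_0,\alpha_0/k)$--good, and here $k\le n+1$, so one obtains the exponent $\tfrac1{d(2l-1)(n+1)}=\alpha$; passing to sup-norms of vectors of good functions costs only a constant. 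The second input is the non-concentration estimate $\sup_{\bx\in\hat B_0}\|g u_\f(\bx)\,\be_{\Gamma}\|^{1/\rk\Gamma}\ge\rho$ for every primitive $\Gamma$, with $\rho$ again a fixed power of $\delta K\frac{T_1\cdots T_n}{\max_i T_i}$. This is where $l$--nondegeneracy at $\bx_0$ enters essentially: it forces $\f(\hat B_0)$ to avoid concentration near any proper affine subspace, so none of the functions above can be uniformly small on $\hat B_0$, and the claimed lower bound follows once the scaling built into $g$ is accounted for.

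Given these two inputs, \cite[Theorem~5.2]{KM} yields $\Leb_d\{\bx\in B_0:\Lambda_{\bx}\text{ has a nonzero vector shorter than }\rho_0\}\ll(\rho_0/\rho)^{\alpha}\Leb_d(B_0)$, and substituting the values of $\rho_0$ and $\rho$ gives \eqref{eqn5.4}. I expect the only substantive points to be the non-concentration estimate in the second input -- the step that genuinely exploits nondegeneracy -- together with the bookkeeping that collapses the general flag bound of \cite[Theorem~5.2]{KM} into the single clean term of \eqref{eqn5.4} with the precise exponent \eqref{def:alpha}. It is in refining the first of these, by replacing the lower bound $\rho$ valid on all of $\hat B_0$ with one that is sensitive to the radius of the ball $B$, that the sharpening recorded in Theorem~\ref{BKM_new} becomes possible.
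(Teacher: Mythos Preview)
The paper does not prove this statement at all: Theorem~\ref{thm:KM} is simply quoted as a simplified form of \cite[Theorem~1.4]{BKM} and used as a black box. There is therefore no ``paper's own proof'' to compare your proposal against. Your sketch is a faithful outline of the Kleinbock--Margulis/Bernik--Kleinbock--Margulis argument (encode the inequalities as a short lattice vector, verify the $(C,\alpha)$--good property of the relevant pl\"ucker coordinates via nondegeneracy, check the sup--bound over primitive subgroups, then invoke quantitative nondivergence), and would serve well as an expository aside.

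One remark on your closing sentence: the way the present paper obtains Theorem~\ref{BKM_new} is \emph{not} by going back into the quantitative nondivergence machinery and replacing $\rho$ by a radius--sensitive quantity. Instead the paper uses Theorem~\ref{thm:KM} itself as a black box for the regime $K<r^{-1}$ (and for the small--$\|\ba\|$ tail), and handles the complementary regime $K\ge r^{-1}$ by the elementary one--variable estimate of Proposition~\ref{prop5.1} via Theorem~\ref{thm:BKM+}, summing over the admissible $\ba$. So the refinement comes from a direct counting/Fubini argument layered on top of Theorem~\ref{thm:KM}, not from a sharper nondivergence input. You may want to adjust that final comment accordingly.
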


We can say that the constant $E$ appearing in \eqref{eqn5.3} is related but may be different to the constant $E$ appearing in \eqref{eqn5.4}; see the details in the proof of Theorem \ref{BKM_new}. We note that Theorem~\ref{thm:KM} together with the following statement appearing as Theorem~1.3 in \cite{BKM} is already enough to prove a weaker version of Theorem~\ref{BKM_new} in which one would have $\min\{K,T^{1/2}\}$ with $T=\max_{1\le i\le n} T_i$ in place of the term $\min\{K,r^{-1}\}$ in \eqref{eqn5.3}.

\begin{theorem}[{\cite[Theorem~1.3]{BKM}}]\label{thm:KM2}
Let $\bU\subset\R^d$ be open, $\f:\bU\to\R^n$ be a $C^2$ map.
Let $B$ be a ball of radius $r>0$ such that $2B\subset\bU$, and let
\begin{equation}\label{M_BKM}
L:=\sup_{\substack{\bx\in 2B\\ |\beta|=2}}\|\partial_\beta\f(\bx)\|<\infty\,.
\end{equation}
Then, for any $\ba\in\R^n$ such that
\begin{equation}\label{eq5.5}
\|\ba\|\ge\frac{1}{4nLr^2}
\end{equation}
the set
\begin{equation}
\mathfrak{S}'_{\f}(\delta,\ba):=\left\{\bx\in B:\exists\;a_0\in\Z\;\text{such that}\;\left.\begin{array}{l}
     |a_0+\f(\bx)\ba^\top|<\delta  \\
     \|\nabla\f(\bx)\ba^\top\|\ge\sqrt{ndL\|\ba\|}
\end{array}\right.\right\}
\end{equation}
has measure at most $C_d\delta\Leb_d(B)$, where $C_d>0$ is a constant depending on $d$ only.
\end{theorem}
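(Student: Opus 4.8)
The plan is to prove Theorem~\ref{thm:KM2} by a standard geometry‑of‑numbers / Taylor expansion argument, reducing it to a covering estimate for the sublevel sets of a single affine form and the associated ``expanding'' gradient condition. First I would fix the ball $B$ of radius $r$ with $2B\subset\bU$ and the vector $\ba\in\R^n$ with $\|\ba\|\ge\frac1{4nLr^2}$, and set $\varphi_\ba(\bx):=\f(\bx)\ba^\top$, a real‑valued $C^2$ function on $2B$ whose Hessian is bounded in operator norm by $ndL\|\ba\|$ (using \eqref{M_BKM} and $\|\ba\|_1\le n\|\ba\|$). The point $\bx$ belongs to $\mathfrak S'_\f(\delta,\ba)$ precisely when there is an integer $a_0$ with $|a_0+\varphi_\ba(\bx)|<\delta$ \emph{and} $\|\nabla\varphi_\ba(\bx)\|\ge\sqrt{ndL\|\ba\|}$; the crux is that on the region where the gradient is this large, the function $\varphi_\ba$ is ``non‑degenerate of order $1$ with a quantitative lower bound'', so that the set where $\varphi_\ba$ is within $\delta$ of $\Z$ is thin.

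The key steps, in order, are as follows. (1) Cover $B$ by a bounded‑overlap family of balls $\{B_j\}$ of radius $\rho:=\tfrac12\sqrt{ndL\|\ba\|}\,/\,(ndL\|\ba\|)=\tfrac12(ndL\|\ba\|)^{-1/2}$; the hypothesis $\|\ba\|\ge\frac1{4nLr^2}$ guarantees $\rho\le r$ (up to the dimensional constant), so this is a genuine subdivision of $B$ and the number of such balls is $\ll_d (r/\rho)^d$. (2) On any $B_j$ that meets $\mathfrak S'_\f(\delta,\ba)$, pick a point $\bx_j$ with $\|\nabla\varphi_\ba(\bx_j)\|\ge\sqrt{ndL\|\ba\|}$; by the Hessian bound and $\mathrm{diam}(B_j)=2\rho$, for every $\bx\in B_j$ we have $\nabla\varphi_\ba(\bx)=\nabla\varphi_\ba(\bx_j)+O(ndL\|\ba\|\cdot\rho)$, and our choice of $\rho$ makes the error at most half of $\sqrt{ndL\|\ba\|}$, so $\|\nabla\varphi_\ba(\bx)\|\ge\tfrac12\sqrt{ndL\|\ba\|}$ throughout $B_j$, and moreover the gradient direction is essentially constant on $B_j$. (3) Consequently, along the coordinate direction $e_k$ realizing the largest component of $\nabla\varphi_\ba(\bx_j)$, the partial derivative $\partial_k\varphi_\ba$ has absolute value $\gg (ndL\|\ba\|)^{1/2}/\sqrt d$ on all of $B_j$, so $\varphi_\ba$ restricted to any segment in $B_j$ parallel to $e_k$ is strictly monotone with derivative bounded below by $c(ndL\|\ba\|)^{1/2}$. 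Hence on each such segment the set $\{t:|a_0+\varphi_\ba|<\delta\text{ for some }a_0\in\Z\}$ — being the preimage of a $\delta$‑neighbourhood of $\Z$ under a map with derivative $\gg(ndL\|\ba\|)^{1/2}$ on a segment of length $2\rho\asymp(ndL\|\ba\|)^{-1/2}$, i.e.\ a map whose image has length $\ll1$ — has one‑dimensional measure $\ll\delta(ndL\|\ba\|)^{-1/2}\asymp\delta\rho$. (4) Integrate this one‑dimensional bound over the remaining $d-1$ coordinates of $B_j$ (Fubini) to get $\Leb_d(\mathfrak S'_\f(\delta,\ba)\cap B_j)\ll\delta\rho\cdot\rho^{d-1}=\delta\,\Leb_d(B_j)$, then sum over the $\ll_d(r/\rho)^d$ balls and use bounded overlap to conclude $\Leb_d(\mathfrak S'_\f(\delta,\ba)\cap B)\ll_d\delta\,\Leb_d(B)$, which is the claim with $C_d$ depending only on $d$.

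The main obstacle, and the place requiring care, is Step~(2)–(3): making sure the radius $\rho$ of the subdivision is chosen so that simultaneously (a) the Hessian‑driven variation of $\nabla\varphi_\ba$ over a ball of radius $\rho$ is at most a fixed fraction of the guaranteed lower bound $\sqrt{ndL\|\ba\|}$ (so the gradient stays large \emph{and} roughly fixed in direction), and (b) $\rho\le r$ so that the balls $B_j$ actually fit inside $2B$ where the Hessian bound \eqref{M_BKM} is valid — this is exactly what the hypothesis \eqref{eq5.5} is for, since $\|\ba\|\ge\frac1{4nLr^2}$ rearranges to $(ndL\|\ba\|)^{-1/2}\ll_d r$. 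One must track the numerical constants through these two competing requirements; everything else (the Hessian bound from \eqref{M_BKM}, the monotonicity‑to‑measure estimate along lines, and the Fubini/bounded‑overlap bookkeeping) is routine. A cleaner alternative for Step~(3) is to invoke a one‑variable sublevel‑set lemma of the form: if $g\in C^1([-\rho,\rho])$ with $|g'|\ge c$ everywhere, then $\Leb_1\{t:\dist(g(t),\Z)<\delta\}\le (2\rho\,c^{-1})\cdot(\text{number of integers in an interval of length }2\rho c+2\delta)\cdot\tfrac{2\delta}{c}$, which when $\rho c\asymp1$ collapses to $\ll\delta/c$; I would state and use this as a short self‑contained sublemma rather than citing external results, keeping the proof entirely elementary.
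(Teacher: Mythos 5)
This theorem is quoted from \cite{BKM}; the paper gives no proof of it (it simply cites the source and remarks that the integrality of $\ba$ is never used). Your proposal must therefore stand on its own. The overall strategy --- set $\varphi_\ba:=\f(\cdot)\ba^\top$, bound the Hessian of $\varphi_\ba$ by $ndL\|\ba\|$, subdivide $B$ into balls $B_j$ of radius $\rho\asymp(ndL\|\ba\|)^{-1/2}$ on which $\nabla\varphi_\ba$ varies little, slice along the dominant coordinate direction, apply a one-variable measure estimate, and finish with Fubini and bounded overlap --- is sound and in the spirit of \cite{BKM}; the hypothesis \eqref{eq5.5} is precisely what makes $\rho\ll r$.

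There is, however, a genuine gap in Step~(3) and in the sublemma you invoke. You claim the image of a segment of length $2\rho$ under $\varphi_\ba$ ``has length $\ll1$.'' This does not follow from the hypotheses: the theorem gives only a \emph{lower} bound $\sqrt{ndL\|\ba\|}$ on the gradient, while $c_0:=|\partial_k\varphi_\ba(\bx_j)|$ can be arbitrarily large, so the image length $\asymp\rho c_0$ can be arbitrarily long and hence meet arbitrarily many integers. Correspondingly, your sublemma as stated (with only $|g'|\ge c$) is false, since the count of nearby integers is governed by $\sup|g'|$, not by $\inf|g'|$. The repair is short but must be made explicit: on $B_j$ the Hessian bound controls the \emph{oscillation} of $\partial_k\varphi_\ba$ by (for a slightly smaller $\rho$ than you chose) at most $\tfrac12\sqrt{ndL\|\ba\|}\le\tfrac12 c_0$, so that $\tfrac12 c_0\le|\partial_k\varphi_\ba|\le\tfrac32 c_0$ throughout $B_j$. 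The image of the segment then has length $\le 3\rho c_0$ and meets at most $3\rho c_0+O(1)$ integers, each with preimage of length $\le 4\delta/c_0$; the product is $\ll\rho\delta+\delta/c_0\ll\rho\delta$ since $c_0\ge\sqrt{ndL\|\ba\|}\asymp\rho^{-1}$. This restores $\Leb_d(\mathfrak{S}'_\f(\delta,\ba)\cap B_j)\ll_d\delta\Leb_d(B_j)$ and, after summing over $j$, the claim with a constant depending only on $d$. Apart from this point and the easily fixed numerical constants you already flag, the argument is correct.
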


We note that Theorem~1.3 in
\cite{BKM} is stated for $\ba\in\Z^n$, however nowhere is the proof does it use the fact that $\ba$ is integer, and assuming that $\ba\in\R^n$ is enough.
The proof of Theorem~\ref{BKM_new} will use the following strengthening of Theorem~\ref{thm:KM2}.

\begin{theorem}\label{thm:BKM+}
Let $\bU\subset\R^d$ be open, $\bx_0\in\bU$ and $\f:\bU\to\R^n$ be $l$--nondegenerate at $\bx_0$. Then there exists a ball $B_0$ centred at $\bx_0$ and satisfying $2B_0\subset\bU$, and a constant $C_1>0$ depending on $\bx_0$ such that for any ball $B\subset B_0$ of radius $r>0$ and any $\ba\in\R^n$ satisfying 
\begin{equation}\label{eq5.5+}
\|\ba\|\ge\max\left\{\frac{1}{4nLr^2},\frac{ndL}{C_1^2}\right\}\,,
\end{equation}
where $L$ is given by \eqref{M_BKM}, the set
\begin{equation}\label{eqn5.5}
\mathfrak{S}''_{\f}(\delta,\ba):=\left\{\bx\in \bU:\exists\;a_0\in\Z\;\text{such that}\;\left.\begin{array}{l}
     |a_0+\f(\bx)\ba^\top|<\delta  \\
     \|\nabla\f(\bx)\ba^\top\|\ge r^{-1}
\end{array}\right.\right\}
\end{equation}
satisfies 
$$
\Leb_d(\mathfrak{S}''_{\f}(\delta,\ba)\cap B)\le C_{l,d}\delta\Leb_d(B)\,,
$$
where $C_{l,d}>0$ is a constant depending on $l$ and $d$ only.
\end{theorem}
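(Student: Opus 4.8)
The plan is to deduce Theorem~\ref{thm:BKM+} from Theorem~\ref{thm:KM2} by a covering argument exploiting the $l$-nondegeneracy of $\f$ at $\bx_0$. The key point is that Theorem~\ref{thm:KM2} already gives the desired bound, but with the lower bound $\|\nabla\f(\bx)\ba^\top\|\ge\sqrt{ndL\|\ba\|}$ in place of $\|\nabla\f(\bx)\ba^\top\|\ge r^{-1}$ on the gradient, and with the ball $B$ in place of $\bU$ in the definition of the bad set. The issue is therefore twofold: (i) the threshold $\sqrt{ndL\|\ba\|}$ may be larger than $r^{-1}$, so $\mathfrak{S}''_{\f}(\delta,\ba)\cap B$ could contain points where $r^{-1}\le\|\nabla\f(\bx)\ba^\top\|<\sqrt{ndL\|\ba\|}$ that are not captured by $\mathfrak{S}'_{\f}(\delta,\ba)$; and (ii) we want to allow $a_0$ witnessing membership to be arbitrary (since the condition in \eqref{eqn5.5} is stated with $\bx\in\bU$, not $\bx\in B$). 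Point (ii) is harmless: on a fixed ball $B$ of radius $r\le1$, for $\bx\in B$ the quantity $\f(\bx)\ba^\top$ ranges over an interval whose length is controlled by $r\sup_B\|\nabla\f\|\,\|\ba\|$, so only boundedly many integers $a_0$ can ever satisfy $|a_0+\f(\bx)\ba^\top|<\delta\le1$ for some $\bx\in B$; hence $\mathfrak{S}''_{\f}(\delta,\ba)\cap B$ is a bounded union of sets of the shape handled by Theorem~\ref{thm:KM2}, up to the gradient threshold discrepancy.

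The heart of the matter is point (i), controlling the region
\[
\scR:=\{\bx\in B:\exists\,a_0\in\Z,\ |a_0+\f(\bx)\ba^\top|<\delta,\ r^{-1}\le\|\nabla\f(\bx)\ba^\top\|<\sqrt{ndL\|\ba\|}\}.
\]
Here I would use $l$-nondegeneracy. After shrinking $B_0$ around $\bx_0$, the nondegeneracy hypothesis provides a constant $C_1>0$ and finitely many partial derivatives $\partial_{\beta_1}\f,\dots,\partial_{\beta_N}\f$ of orders $\le l$ such that on $B_0$ the matrix formed by these is uniformly nonsingular — concretely, for every unit vector $\bw\in\R^n$ and every $\bx\in B_0$, $\max_j|\partial_{\beta_j}\f(\bx)\cdot\bw|\ge C_1$. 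Applying this with $\bw=\ba/\|\ba\|$, on the set $\scR$ the zeroth-order term $\f(\bx)\ba^\top$ is within $\delta\le1$ of an integer and the first-order term $\nabla\f(\bx)\ba^\top$ has norm $<\sqrt{ndL\|\ba\|}$; the hypothesis \eqref{eq5.5+}, namely $\|\ba\|\ge ndL/C_1^2$, is exactly arranged so that $\sqrt{ndL\|\ba\|}\le C_1\|\ba\|$, forcing the ``large'' derivative witnessing nondegeneracy to have order $\ge2$. One then runs the standard Kleinbock--Margulis ``$(C,\alpha)$-good functions'' / differentiation argument (as in \cite{BKM,KM}): the function $\bx\mapsto a_0+\f(\bx)\ba^\top$ is small on $\scR$ while one of its higher-order derivatives along a suitable direction is $\gg C_1\|\ba\|$ in norm, which by an iterated mean-value / Taylor estimate bounds $\Leb_d(\scR)$ by $\delta$ times $\Leb_d(B)$ times a constant depending on $l$ and $d$. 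Combining this with Theorem~\ref{thm:KM2} for the complementary region gives the claimed bound.

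Actually, a cleaner route that avoids re-proving the ``good functions'' estimate is to feed the truncated problem back into Theorem~\ref{thm:KM2} itself applied on an auxiliary smaller ball, but I expect the most transparent argument is the direct Taylor-expansion one sketched above, since the quantitative nondivergence machinery is already packaged in the statements we are allowed to cite; the discrepancy region $\scR$ is genuinely governed by a single scalar function $a_0+\f(\bx)\ba^\top$ that is small on $\scR$, so the one-dimensional Kleinbock--Margulis lemma applied along lines through $B$ (and then Fubini over $\R^d$) suffices. In all cases one must be slightly careful that shrinking $B_0$ does not affect $L$ badly: since $L=\sup_{2B,\,|\beta|=2}\|\partial_\beta\f\|$ and $B\subset B_0$, shrinking only decreases $L$, so \eqref{eq5.5+} is stable under the shrinking needed to validate the nondegeneracy estimate, and the final constant $C_{l,d}$ absorbs the number $N$ of witnessing partial derivatives and the order $l$.

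The step I expect to be the main obstacle is making the truncated-gradient region $\scR$ genuinely amenable to a clean measure estimate: one must simultaneously keep track of the integer $a_0$ (which varies with $\bx$, but only over a bounded set as noted), of which higher-order partial derivative is large at a given $\bx$ (a partition of $\scR$ into $N$ pieces), and of the fact that the relevant derivative bound is only a lower bound on a norm, i.e.\ on a maximum of $n$ scalar quantities, so one has to localise further to the coordinate realising the maximum. None of these is deep, but organising them so that the final constant depends only on $l$ and $d$ (and not on $\ba$, $\delta$, $r$, or $\bx_0$ beyond the choice of $B_0$) is the delicate bookkeeping that the proof must handle.
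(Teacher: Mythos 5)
Your high-level strategy is sound --- use $l$-nondegeneracy to force a higher-order derivative to be large wherever the gradient is moderate, then reduce to a one-dimensional measure estimate --- and it is close in spirit to what the paper actually does. But there are two concrete ingredients that the paper supplies and that your sketch leaves genuinely open, and without them the argument does not close.

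First, the direction problem. You say ``one of its higher-order derivatives along a suitable direction is $\gg C_1\|\ba\|$'' and then wave at ``Fubini over $\R^d$''. But $l$-nondegeneracy only guarantees that \emph{some} mixed partial $\partial_\beta\f(\bx_0)\ba^\top$ with $|\beta|\le l$ is large; to reduce to a one-dimensional estimate by slicing along a coordinate line and invoking Fubini, you need a \emph{pure} $k$-th partial $\partial_i^k\f(\bx)\ba^\top$ to be uniformly large along the coordinate $i$ you slice in --- and moreover that this coordinate can be chosen consistently with the direction in which the first partial is $\ge r^{-1}$. The paper resolves this with a genuine lemma (the Claim in \S\ref{proof-of_claim}): for each $\ba$ there is a rotation of the coordinate system around $\bx_0$ making \emph{all} $d$ pure $k$-th partials $\tilde\partial_i^k\f(\bx_0)\ba^\top$ of size $\gg\|\ba\|$, with a uniform constant over $\ba$ obtained by a compactness argument, and then one slices the circumscribed cube of $B$ aligned with these rotated axes. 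Your ``partition of $\scR$ into $N$ pieces according to which witnessing partial is large'' does not by itself recover this, because the witnessing partial need not be a pure one in a usable coordinate direction.

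Second, the one-dimensional lemma you reach for is the wrong tool. The ``$(C,\alpha)$-good function'' bound driven by a lower bound on a $k$-th derivative of the scalar function $F(x)=a_0+\f(\bx)\ba^\top$ gives $\Leb_1\{|F|<\delta\}\ll\delta^{1/k}$, not $\delta$. What makes the linear-in-$\delta$ estimate possible is precisely that on $\mathfrak{S}''_{\f}(\delta,\ba)$ you also retain the lower bound $|F'|\gtrsim r^{-1}=\Leb_1(I)^{-1}$ from the gradient condition, and you must use \emph{both} the $k$-th derivative bound (to control the number of intervals where $|F+a|<\delta$ via zeros of $F'$ and $F''$) and the $F'$ lower bound (to control the measure of each such interval). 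This is exactly the content of Proposition~\ref{prop5.1}, an extension of a lemma of Beresnevich, and it is a nontrivial counting argument rather than an iterated mean-value estimate. Your sketch does not identify this and, as written, would lose a power of $\delta$.

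Two smaller remarks. Your dichotomy is organised by the size of $\|\nabla\f(\bx)\ba^\top\|$ as $\bx$ varies, whereas the paper's dichotomy is by the order $k$ of the witnessing derivative (determined by $\ba$ once and for all): if $k\ge2$ it applies Proposition~\ref{prop5.1} throughout, and if $k=1$ it shows, using \eqref{eq5.5+}, that the threshold of Theorem~\ref{thm:KM2} is automatically met on all of $\hat B$, so Theorem~\ref{thm:KM2} applies directly. Both decompositions are reasonable, but yours additionally needs the rotated-coordinate and Proposition~\ref{prop5.1} machinery on the discrepancy region, so it is not actually shorter. Finally, the ``boundedly many $a_0$'' remark in your point (ii) is both unnecessary (Theorem~\ref{thm:KM2} already quantifies over all $a_0\in\Z$) and incorrect as stated, since $r\|\ba\|$ is not bounded under \eqref{eq5.5+}.
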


We remark that since $\f$ is non-degenerate at $\bx_0$, we necessarily have that $\f\in C^2$ on a neighbourhood of $\bx_0$. This means that the constant $L$, given by \eqref{M_BKM}, is well defined. Furthermore, we can assume that $L$ is non-zero.

The proof of Theorem~\ref{thm:BKM+} will use the following proposition which extends \cite[Proposition 1]{Ber99Acta}.

\begin{proposition}\label{prop5.1}
Let $0<\delta<1$, $k\ge2$, $\Theta\ge1/2$, $I\subset\R$ be an interval and $F:I\to\R$ be a $C^k$ function such that $\inf_{x\in I}|F^{(k)}(x)|>0$. Let $\mathfrak{S}^{(1)}(F,\delta,\Theta,I)$ be the set of $x\in I$ such that
\begin{equation}\label{eqn5.6}
  |F(x)+a|<\delta,\qquad |F'(x)|> \Theta^{-1}\Leb_1(I)^{-1}
\end{equation}for some $a\in\Z$. 
Then
\begin{equation}\label{eqn5.7}
\Leb_1(\mathfrak{S}^{(1)}(F,\delta,\Theta,I))\le 8k\Theta\delta \Leb_1(I)\,.
\end{equation}
\end{proposition}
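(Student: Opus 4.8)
The plan is to prove Proposition~\ref{prop5.1} by a covering argument based on the classical fact that a $C^k$ function with $k$-th derivative bounded away from zero cannot have its graph stay close to a fixed horizontal line on too large a set. First I would reduce to the ``sublevel set'' estimate: if $I^*:=\{x\in I: |F'(x)|>\Theta^{-1}\Leb_1(I)^{-1}\}$ is written as a disjoint union of maximal open subintervals $J$, then it suffices to bound $\Leb_1(\{x\in J: |F(x)+a|<\delta \text{ for some }a\in\Z\})$ on each such $J$ and sum up. On each such $J$, since $F'$ is continuous and does not vanish, $F$ is strictly monotone, so the set where $F+a$ lands in an interval of length $2\delta$ (for a fixed $a$) is itself an interval, and the number of integers $a$ that are relevant is controlled by the total variation of $F$ on $J$, i.e. by $|F(\sup J)-F(\inf J)|$.

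The key quantitative input is a lower bound for $|F'|$ on a definite portion of each $J$, which is where the hypothesis $\inf_{x\in I}|F^{(k)}(x)|>0$ and the factor $k$ in \eqref{eqn5.7} enter. I would apply (an iterated form of) the standard lemma that if $|G^{(j)}|\ge \lambda$ on an interval of length $\ell$ then $|G^{(j-1)}|\ge \lambda\ell/(2\cdot\text{something})$ on at least half of it, or equivalently use the mean value theorem $k-1$ times starting from $F^{(k)}$ to produce, on a subinterval of $J$ of comparable length, a point where $|F'|$ is large, and then control how fast $F$ changes. Concretely, on an interval $J$ of length $\ell$, the change $|F(\sup J)-F(\inf J)|$ can be estimated from above using $\sup_J|F'|$, while the contribution of each integer level $a$ to the measure is at most the length of the preimage under the monotone map $F|_J$ of an interval of length $2\delta$; chaining these, the total measure over $J$ is $\ll k\,\Theta\,\delta\,\Leb_1(I)$-ish per unit of length of $J$, and summing over the (at most countably many) components $J\subset I$ gives the full interval $I$ back, producing the bound $8k\Theta\delta\Leb_1(I)$ with an explicit constant.

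More carefully, I expect the cleanest route is: on each maximal component $J$ where $|F'|>\Theta^{-1}\Leb_1(I)^{-1}$, monotonicity gives $\Leb_1(\{x\in J:\exists a\in\Z,\ |F(x)+a|<\delta\})\le 2\delta\cdot(\text{number of integers in an interval of length }|F(\sup J)-F(\inf J)|+2\delta)/(\inf_J|F'|)$ — wait, that double-counts; rather, pulling back the $2\delta$-neighbourhoods of $\Z$ under the monotone $C^1$ map $F|_J$ and using $|F'|>\Theta^{-1}\Leb_1(I)^{-1}$, one gets directly that this measure is at most $2\delta\Theta\Leb_1(I)\cdot(\lceil\text{total variation of }F\text{ on }J\rceil+1)/(\text{total variation over }I)$... the bookkeeping needs the $C^k$ hypothesis precisely to bound the number of components $J$ and the oscillation of $F$: since $F^{(k)}$ does not vanish, $F'$ has at most $k-1$ zeros, hence $I^*$ has at most $k$ components, and on each the above monotone pull-back bound applies with the union of the $2\delta$-neighbourhoods having total $F$-measure at most the oscillation of $F$ plus $2\delta$ times the number of relevant integers. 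I anticipate the main obstacle is precisely this combinatorial bookkeeping — making the dependence on $k$ come out as the clean linear factor $8k$ rather than something worse — and handling the boundary effects where a component $J$ of $I^*$ abuts the endpoints of $I$ or where $|F'|$ is only barely above the threshold; the cited \cite[Proposition 1]{Ber99Acta} presumably does exactly this for a special case, so I would follow its structure and simply track the constants through the extension to general $C^k$.
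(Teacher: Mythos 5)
There is a genuine gap, and it is precisely in the ``bookkeeping'' that you acknowledge leaving open. Your plan is to decompose $I$ into the maximal components $J$ of the superlevel set $I^*=\{|F'|>\Theta^{-1}\Leb_1(I)^{-1}\}$, and on each $J$ pull back the $2\delta$-neighbourhoods of $\Z$ under the monotone map $F|_J$, bounding the contribution by $2\delta$ divided by the threshold, times the number of relevant integers $a$. But the number of relevant $a$'s on $J$ is (up to $\pm 1$) the oscillation of $F$ on $J$, and there is no upper bound whatsoever on $F'$ in the hypotheses -- only a lower bound on $F^{(k)}$ -- so this oscillation can be made arbitrarily large with all the hypotheses (including $\Leb_1(I)$) fixed. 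Consequently the product ``(per-$a$ measure) $\times$ (number of $a$'s)'' is not bounded by any quantity of the form $C(k)\Theta\delta\Leb_1(I)$, and your proposed estimate ``$\le 2\delta\Theta\Leb_1(I)\cdot(\lceil\text{osc}_J F\rceil+1)/(\cdots)$'' cannot be turned into the claimed conclusion. The issue is not a boundary effect or a constant to be tracked; it is that the per-$a$ estimate must use the \emph{local} size of $|F'|$ near the preimage of $a$, and the resulting sum $\sum_a|F'(\xi_a)|^{-1}$ needs a spacing argument, not a count.

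The idea the paper uses and your sketch is missing is this: decompose $I$ not by zeros of $F'$ (nor by crossings of $|F'|$ with the threshold) but by the zeros of \emph{both} $F'$ \emph{and} $F''$. Since $F^{(k)}$ is nonvanishing, $F'$ and $F''$ together have at most $2k-3$ zeros, giving at most $2k-2$ pieces $I_j$ on each of which $F'$ is sign-definite \emph{and monotone}. Sign-definiteness forces consecutive test points $\alpha_{a_1,i_1}<\alpha_{a_2,i_2}$ in a piece to come from \emph{distinct} integers, so $|F(\alpha_{a_1,i_1})-F(\alpha_{a_2,i_2})|\ge\tfrac12$ (using $\delta<\tfrac14$). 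Monotonicity of $F'$ then upgrades the mean value theorem to $|F'(\alpha_{a_2,i_2})|^{-1}\le 2(\alpha_{a_2,i_2}-\alpha_{a_1,i_1})$, so the sum $\sum_{A_j}|F'(\alpha_{a,i})|^{-1}$ \emph{telescopes} to $\le 2\Leb_1(I_j)+\Theta\Leb_1(I)$ (the lone extra term absorbed via the threshold), which is independent of the oscillation of $F$. Summing over $j\le 2k-2$ and feeding this into the elementary per-component bound $\Leb_1(\sigma_i(a))\le 2\delta|F'(\alpha_{a,i})|^{-1}$ gives $8k\Theta\delta\Leb_1(I)$. Without the monotonicity of $F'$ on the pieces, the step $|F'(\alpha_{a_2,i_2})|^{-1}\le 2(\alpha_{a_2,i_2}-\alpha_{a_1,i_1})$ fails, and that is exactly the step your covering-by-components-of-$I^*$ scheme cannot supply.
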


\begin{proof}
Without loss of generality, we will assume that $\delta<\tfrac14$ as otherwise \eqref{eqn5.7} is trivially true. Let $\sigma(a)$ be the set of $x\in I$ satisfying \eqref{eqn5.6} for a given $a$. Clearly, $\sigma(a)$ can be written as a union of non-empty disjoint intervals $\sigma(a)=\cup_i\sigma_i(a)$. In what follows we will assume without loss of generality that this union is minimal, that is $\sigma_i(a)\cup \sigma_j(a)$ is not an interval if $i\neq j$. Fix any $\ve>0$ and  for each choice of $a$ and $i$, fix any $\alpha_{a,i}\in \sigma_i(a)$ such that
\begin{equation}\label{eqn5.8}
\inf_{x\in \sigma_i(a)}|F'(x)|\ge \frac{|F'(\alpha_{a,i})|}{1+\ve}\,.
\end{equation}
By the Mean Value Theorem, \eqref{eqn5.6} and \eqref{eqn5.8}, $\Leb_1(\sigma_i(a))\le 2(1+\ve)\delta|F'(\alpha_{a,i})|^{-1}$. Therefore,
\begin{equation}\label{eqn5.9}
\Leb_1(\mathfrak{S}^{(1)}(F,\delta,I))\le\sum_{a}\sum_i \Leb_1(\sigma_i(a))\le 2(1+\ve)\delta\sum_{a}\sum_i|F'(\alpha_{a,i})|^{-1}\,.
\end{equation}
Since $\inf_{x\in I}|F^{(k)}(x)|>0$, $F'$ and $F''$ have $s\le(k-1)+(k-2)=2k-3$ zeros inside $I$. Thus $I$ can be written as a disjoint union of subintervals $I_0,\dots,I_s$ such that the above zeros are one of the endpoints of these subintervals. For each $j$, let $A_j$ be the set of all $\alpha_{a,i}\in I_j$. Thus $\cup_{a}\cup_{i}\{\alpha_{a,i}\}=\cup_j A_j.$ Note that on each interval $I_j$, $F'$ is monotonic and does not change sign. For simplicity we will assume that $F'$ is increasing (the decreasing case is considered exactly the same way) and positive. Therefore $F$ is increasing. Then, we claim that if $\alpha_{a_1,i_1}\le\alpha_{a_2,i_2}$ are two consecutive elements in $I_j$ then we must have that $a_1\neq a_2$. To prove this, assume for the moment that the claim is not true and let $\alpha_{a,i_1}\le\alpha_{a,i_2}$ be two consecutive elements in $I_j$. Let $c_1$ be the right most end point of $\sigma_{i_1}(a)$, $c_2$ be the left most end point of $\sigma_{i_2}(a).$ By the minimality of the union $\sigma(a)=\cup_i\sigma_i(a)$, we must have that $c_1<c_2$ and $\vert F(c_i)+a\vert=\delta$ for $i=1,2.$ This implies that $F'$ must vanish between $c_1$ and $c_2$, contrary to the fact that $F'$ is not zero inside $I_j$, thus proving the claim. Next, using the inequalities $\delta<\tfrac14$ and $|F(\alpha_{a_\ell,i_\ell})+a_\ell|<\delta$ for $\ell=1,2$, we obtain that
$$
\tfrac12\le|a_1-a_2|-2\delta\le |F(\alpha_{a_1,i_1})-F(\alpha_{a_2,i_2})|\,.
$$
Then, using the Mean Value Theorem and the assumption that $F'$ is increasing, we get that
$$
\tfrac12\le|F(\alpha_{a_1,i_1})-F(\alpha_{a_2,i_2})|\le |F'(\alpha_{a_2,i_2})|(\alpha_{a_2,i_2}-\alpha_{a_1,i_1})\,,
$$
whence
$$
|F'(\alpha_{a_2,i_2})|^{-1}\le 2(\alpha_{a_2,i_2}-\alpha_{a_1,i_1})\,.
$$
On summing the latter over $\alpha_{a_2,i_2}\in I_j$ and using the fact that the sum of right hand side is $\le 2\Leb_1(I_j)$, we get that
$$
\sum_{\alpha_{a,i}\in A_j}|F'(\alpha_{a,i})|^{-1}\le 2\Leb_1(I_j)+\max_{\alpha_{a,i}\in A_j}|F'(\alpha_{a,i})|^{-1}
\stackrel{\eqref{eqn5.6}}{\le} 2\Leb_1(I_j)+\Theta \Leb_1(I)\,.
$$
Finally, since the number of different intervals $I_j$ is $\le 2k-2$ and $\sum_j \Leb_1(I_j)=\Leb_1(I)$, we get that
$$
\sum_{a}\sum_i|F'(\alpha_{a,i})|^{-1}\le 2 \Leb_1(I)+(2k-2)\Theta\Leb_1(I)\le (2k+2)\Theta \Leb_1(I)\,,
$$ since $\Theta>\frac{1}{2}.$
Combining this with \eqref{eqn5.9} and letting $\ve\to0$ gives the required bound.
\end{proof}

\medskip

\begin{proof}[Proof of Theorem~\ref{thm:BKM+}]
Since $\f$ is $l$-non-degenerate at $\bx_0$, there is a constant $C_0>0$ such that for any $\ba\in\R^n$ with $\|\ba\|\ge1$ there exists a multi-index $\beta$ with $1\le|\beta|=k\le l$ such that
$$
|\partial_\beta\f(\bx_0)\ba^\top|\ge C_0 \|\ba\|\,.
$$
This can be explained as follows. First, let us consider $$\mathcal{F}:=\{g=\f(\cdot)\ba^\top~|~\ba\in\R^n, \Vert \ba\Vert=1\}.$$
By the compactness of $\mathcal{F}$ and the nondegeneracy of $\f$ at $\bx_0$, we have that
$$\inf_{g\in\mathcal{F}}\max_{\vert\beta\vert\leq l}\vert \partial_{\beta}g(\bx_0)\vert$$ is attained at some $g$ and is nonzero. Thus we can take $C_0= \inf_{g\in\mathcal{F}}\max_{\vert\beta\vert\leq l}\vert \partial_{\beta}g(\bx_0)\vert. $ The subsequent part of the proof will use the following 

\medskip

\noindent\textbf{Claim:} There exists a rotation of the coordinate systems in $\R^d$ around $\bx_0$, which may depend on $\ba$, such  that $|\tilde\partial_i^k\f(\bx_0)\ba^\top|\ge 2C_1\|\ba\|$ for $i=1,\dots,d$ for some $C_1>0$ independent of $\ba$, where $\tilde\partial_i^k$ refers to derivation in the rotated coordinate system.

\medskip

For completeness we will provide a proof of this claim in \S\ref{proof-of_claim}. For now we continue with the proof of Theorem~\ref{thm:BKM+}.
By continuity, there is an open neighbourhood $\bU_0\subset\bU$ of $\bx_0$ such that for every $\ba\in\R^n\setminus\{\vv0\}$ there is an appropriate rotation of the coordinate systems around $\bx_0$ such that 
\begin{equation}\label{eqn5.10}
  |\tilde\partial_i^k\f(\bx)\ba^\top|\ge C_1\|\ba\|\quad\text{for $i=1,\dots,d$, and for all $\bx\in \bU_0$}.
\end{equation}
Now let $B_0$ be a (Euclidean) ball centred at $\bx_0$ such that whenever a ball $B$ lies in $B_0$, any cube $\hat B$ circumscribed around $B$ is contained in $\bU_0$.

Fix any ball $B\subset B_0$ and let $r$ denote the radius of $B$. Fix any $\ba\in\R^n$  with $\|\ba\|\ge1$ and let $\hat B$ be the 
cube circumscribed around $B$ which is aligned with the rotated coordinate system for which \eqref{eqn5.10} holds. By the choice of $B_0$ we have that $\hat B\subset \bU_0$.
Clearly,
\begin{equation}\label{eq5.11}
\Leb_d(\mathfrak{S}''_{\f}(\delta,\ba)\cap B)\le
\Leb_d(\mathfrak{S}''_{\f}(\delta,\ba)\cap \hat B)\,.
\end{equation} 
Since the rotation of the coordinate system rotates the gradient, the second inequality in \eqref{eqn5.5} implies that $\|\tilde\nabla \f(\bx)\ba^\top\|\ge C_2 r^{-1}$ in the new coordinate system, where $0<C_2\le1$ depends on $d$ only. The condition on the gradient means that for some $1\le i\le d$ we have that $|\tilde\partial_i\f(\bx)\ba^t|\ge C_2r^{-1}$. 

\medskip

Now, with the view to using Proposition~\ref{prop5.1} we perform a reduction to dimension one. Indeed,
by Fubini's theorem, the right hand side of \eqref{eq5.11} is bounded by 
\begin{equation}\label{eq5.11b}
d(2r)^{d-1}\sup_{\bx'} \Leb_1(\mathfrak{S}^{(1)}(F_{\bx'},\delta,\Theta,I))\,,
\end{equation}
where $\bx'$ is the projection of $\bx=(x_1,\dots, x_d)$ obtained by removing $x_i$ for some $i$ and $F_{\bx'}(x_i)=\f(x_1,x_2,\dots,x_d)\ba^\top$ is defined on an interval $I$ of length $\Leb_1(I)=2r$ and $\Theta=C_2^{-1}$. Suppose that $k\ge2$. Then, by Proposition~\ref{prop5.1} and the fact that $k\le l$, we get that \eqref{eq5.11b} is bounded by $8dlC_2^{-1}\delta (2r)^{d}$, which implies the required result.

It remains to consider the case $k=1$. First, note that by \eqref{eqn5.10}, we have $\Vert \tilde\nabla\f(\bx)\ba^\top\Vert \geq C_1 \Vert \ba\Vert^{1/2} \Vert \ba\Vert^{1/2}\stackrel{\eqref{eq5.5+}}{\geq} C_1 \frac{\sqrt{ndL\Vert\ba\Vert}}{C_1}\geq r^{-1}$ for all $\bx\in \hat{B}$. Thus the second inequality in the definition of $\mathfrak{S}''_{\f}(\delta,\ba)\cap \hat B$ is redundant, and moreover using \eqref{eq5.5+} the aforementioned set is the same as $\mathfrak{S}'_{\f}(\delta,\ba)\cap \hat B$. Thus, we apply Theorem~\ref{thm:KM2} to complete the proof. 
\end{proof}

\medskip

\begin{proof}[Proof of Theorem~\ref{BKM_new}]
If $K<r^{-1}$ then $\min\{K,r^{-1}\}=K$ and \eqref{eqn5.3} becomes an immediate consequence of Theorem~\ref{thm:KM}.
Thus for the rest of the proof we will assume that $K\ge r^{-1}$. In this case, since $\delta^n\leq 1\leq r^{-1} \frac{T_1\cdots T_n}{\max_i T_i}$, by Theorem~\ref{thm:KM} again we have that
\begin{equation}\label{eqn5.11}
\Leb_d\big(\mathfrak{S}_{\f}(\delta,r^{-1},\bT)\big)\;\ll E\left(\delta \min\{K,r^{-1}\}\frac{T_1\cdots T_n}{\max_i T_i}\right)^{\alpha}
\end{equation}
while
\begin{equation}\label{eqn5.12}
\mathfrak{S}_{\f}(\delta,K,\bT)\setminus \mathfrak{S}_{\f}(\delta,r^{-1},\bT)\subset\bigcup_{\ba}\mathfrak{S}''_{\f}(\delta,\ba)\,,
\end{equation}
where the union is taken over all $\ba=(a_1,\cdots,a_n)\in\Z^n$ such that for some $\bx\in B_0$
\begin{equation}\label{eqn5.13}
\|\nabla \f(\bx)\ba^\top\|<K\quad\text{and}\quad\vert a_i\vert\le T_i\quad (1\le i\le n)\,.
\end{equation}
By the right hand side of \eqref{eqn5.13}, there are at most $\ll T_{d+1}\cdots T_n$ choices for $a_{d+1},\dots,a_n$. In turn, in view of \eqref{Monge}, by the left and right hand sides of \eqref{eqn5.13}, for each fixed choice of $a_{d+1},\dots,a_n$, there are at most $\ll \prod_{i=1}^d\min\{K,T_i\}$ choices for $a_1,\dots,a_d$. Thus, the union in \eqref{eqn5.12} contains  $\ll \prod_{i=1}^d\min\{K,T_i\}T_{d+1}\cdots T_{n}$ sets. In the case \eqref{eq5.5+} is satisfied, Theorem~\ref{thm:BKM+} is applicable and the corresponding contribution from the union in \eqref{eqn5.12} coincides with the first term of the estimate in \eqref{eqn5.3}. Finally, the union in the right of \eqref{eqn5.12} taken over $\ba$ such that
\begin{equation}\label{s}
\|\ba\|\le\max\left\{\frac{1}{4nLr^2},\frac{ndL}{C_1^2}\right\}
\end{equation} is obviously contained in
$\mathfrak{S}_{\f}(\delta,T_0,T_0,\cdots, T_0)$
with $$
T_0= C_0\max\left\{1,\frac{1}{4nLr^2},\frac{ndL}{C_1^2}\right\}\ge1
$$ 
for a suitably chosen constant $C_0$ depending on $\f$ only. In this last case we again apply Theorem~\ref{thm:KM}, and obtain an upper bound $\ll_{\f, B} E\delta^{\alpha}\leq E\left(\delta \min\{K,r^{-1}\}\frac{T_1\cdots T_n}{\max_i T_i}\right)^{\alpha}$ since $K\geq r^{-1}\geq 1$.
\end{proof}

\subsection{Proof of the Claim}\label{proof-of_claim}

Here we give a proof of the Claim stated within the proof of Theorem~\ref{thm:BKM+}. This will be done in two steps: first we show the existence of a suitable constant for a given $\ba$, then we prove that the constant can be chosen to be independent of $\ba$. 

\medskip

\noindent\textbf{Step 1:} Let us fix $\ba\in\R^n\setminus \{\mathbf{0}\}$. First, we show that there exists a rotation of coordinate system (depending on $\ba$) around $\bx_0$ such that $|\tilde\partial_i^k\f(\bx_0)\ba^\top|\neq 0$ for every $i=1,\dots, d.$ Here $\tilde\partial$ denotes differentiation in the new coordinate system. To this end, given $A=(a_{ij})\in \mathrm{GL}_d(\R)$, differentiating the map $g:=\f(\cdot)\ba^\top$ we have, by the chain rule, that
              	\begin{equation*}\label{lin_sys1}
              	\begin{array}{rcr}
              	\partial_{1}^{k}g\circ A(A \inv \bx_0) &=\sum_{\alpha=(i_1,\dots, i_d), \vert \alpha\vert =k}  a_{11}^{i_1}\cdots a_{d1}^{i_d} \ \partial_{\alpha} g(\bx_0)\,, \\
              		
  \vdots\qquad\quad       \\
              		
             		\partial_{d}^{k}g\circ A(A\inv \bx_0) &= \sum_{\alpha=(i_1,\dots,i_d), \vert \alpha\vert =k}  a_{1d}^{i_1}\cdots a_{dd}^{i_d} \ \partial_{\alpha} g(\bx_0)\,. 
             		\end{array}
              		\end{equation*}
We wish to find  $A\in \mathrm{O}_n(\R) $ such that $x'_i\neq 0$ for all $i=1,\dots, d$ where

$$\begin{array}{rcr}
              x'_1&=& P(a_{11},\dots,a_{d1})\,,  \\
              \vdots\; \\
              x'_d&=& P(a_{1d},\dots,a_{dd}) \,,
              \end{array}
              $$ and  
              $$P(x_1,\dots, x_d)= \sum_{i_1+\cdots+i_d=k}  x^0_{(i_1,\dots,i_d)} x_1^{i_1}\cdots x_d^{i_d},$$ and $x^0_{(i_1,\dots,i_d)}=\partial_{(i_1,\dots, i_d)} g(\bx_0).$
              Note that $P$ is a homogeneous polynomial of degree $k$. Since $ \partial_{ \beta} g(\bx_0)\neq 0$, i.e., $x^0_{\beta}\neq 0$, $P $ is a nonzero polynomial. Also, since $P$ is homogeneous, if $P(\by)\neq 0$ for some $\by=(y_1,\dots, y_d)\neq \mathbf{0},$ we also have $P(\by')\neq 0$ for $\by'=\by/\Vert\by\Vert.$
              
We prove by induction that for any $s\leq d$ we can construct nonzero orthonormal vectors $\bv_1,\dots, \bv_s$ such that $\prod_{i=1}^s P(\bv_i)\neq 0.$ When $s=1$, the statement follows since $P$ is a nonzero and homogeneous. Suppose $\bv_1, \cdots, \bv_s, s<d$ are constructed such that $\prod_{i=1}^{s} P(\bv_i)\neq 0.$ Since $P$ is nonzero at each of these vectors, there exists $W_i, i=1,\dots,s$ open balls centered at $\bv_i$ of radius $r_i$ such that $P$ is nonzero at each point in $W_i$ for $i=1,\dots,s$. Let $\mathcal{L}$ be the subspace that is perpendicular to each $\bv_i.$ Let us take any unit vector (length $1$) $\bv\in \mathcal{L}$, and $V:=\{cO\bv~|~\Vert O-\mathrm{I}\Vert<r, O\in\mathrm{O}(n), c>0\}$ an open neighborhood of $\bv$, where $r<\min r_i.$ Since $P$ is nonzero polynomial, there exists a unit vector $\bw\in V$ such that $P(\bw)\neq 0.$ Thus $\bw=O\bv$ for some $O\in \mathrm{O}(n)$ with $\Vert O-\mathrm{I}\Vert<r$. Then for $i=1,\dots,s$, $\Vert O\bv_i-\bv_i\Vert<r<r_i$, hence $\bv_i'=O\bv_i\in W_i$. By the construction of $W_i$, $P(\bv_i')\neq 0$ for $i=1,\dots, s.$ Since $\bv_1,\dots, \bv_s,\bv$ are orthonormal, $\bv_1',\dots,\bv_s',\bw$ are also orthonormal. 
Let us denote $\bw=\bv_{s+1}'$. Thus we constructed orthonormal vectors $\bv_i', i=1,\dots, s+1$ such that $\prod_{i=1}^{s+1} P(\bv_i')\neq 0.$ 

To summarize the above, we have that 
for every $\ba\in\R^n\setminus \{0\},$ there exists $O_{\ba}\in O_n(\R)$ such that $\tilde\partial_{i}^k \f(\bx_0)\ba^\top:=\partial_{i}^k \f\circ O_{\ba}(O_{\ba}^{-1} \bx_0)\ba^\top\neq 0,$ for all $i=1,\dots,d.$ This implies 
\begin{equation}\label{eqn:nonzero}\min_{i=1}^d\vert \partial_{i}^k \f\circ O_{\ba}(O_{\ba}^{-1} \bx_0)\ba^\top\vert>0.\end{equation} This concludes the first part of the claim.

\medskip
          
\noindent\textbf{Step 2:} Finally, we show the existence of $C_1>0$ as in the claim. Let 
$$C_1:=\inf_{\Vert\ba\Vert=1} \sup_{O\in O_n(\R)} \min_{i=1}^d\vert\partial_{i}^k \f\circ O(O^{-1} \bx_0)\ba^\top\vert.$$ 
The goal is to show that  $C_1>0.$
We prove this by contradiction. If $C_1=0$, then there exists a  sequence $\ba_m$ with $\Vert \ba_m\Vert=1$ such that $$\sup_{O\in O_n(\R)} \min_{i=1}^d\vert\partial_{i}^k \f\circ O(O^{-1} \bx_0)\ba_m^\top\vert<\frac{1}{m}.$$ By the compactness of the unit sphere $\{\ba\in\R^n \;|\; \Vert\ba\Vert=1\}$, there exists a convergent subsequence $\ba_{m_l}\to \ba$, where $\Vert \ba\Vert=1$, such that  
$$\sup_{O\in O_n(\R)} \min_{i=1}^d\vert\partial_{i}^k \f\circ O(O^{-1} \bx_0)\ba^\top\vert=0.$$ This contradicts \eqref{eqn:nonzero} and completes the proof of the claim.

\section{Auxiliary statements and definitions}

\subsection{Some geometry of numbers} Before moving on to the proofs of the main results, we recall some auxiliary facts about lattices. In what follows, given a lattice $\Lambda$ in $\R^{n+1}$, 
$$
\delta_1(\Lambda),\dots,\delta_{n+1}(\Lambda)
$$
will denote Minkowski's {\em successive minima} of the closed unit Euclidean ball centered at $\mathbf{0}$ on the lattice $\Lambda$.
For $g\in \GL_{n+1}(\R),$ by Minkowski's second theorem, we have that 
$$\delta_{1}(g\Z^{n+1}) \cdots \delta_{n+1}(g\Z^{n+1})\asymp_n \vert\det g\vert\,.
$$
In particular,
\begin{equation}\label{eqn: Mincon}
\frac{\vert \det g\vert}{ \delta_1(g\Z^{n+1})}\ll_n \delta_{n+1}(g\Z^{n+1})^n.\end{equation}
For any $g\in \GL_{n+1}(\R),$ we define the dual matrix $g^\star=\sigma^{-1}(g^{\top})^{-1}\sigma$. 
Here and elsewhere, $A^\top$ denotes the transpose of the matrix/row/column $A$, $\sigma=\sigma_{n+1}$ and 
$$
\sigma_k = \begin{bmatrix}
  0 &  \dots & 1 \\
  \vdots  & \iddots  & \vdots \\
  1 &  \dots & 0
\end{bmatrix}
$$ is the long Weyl element of $\GL_k(\R)$, where $k$ is the size of the matrix.

We state the following lemma that appears in \cite[Lemma 3.3]{BY}. We note that it was stated for $g\in\SL_{n+1}(\R)$ in \cite{BY} but it is true for $g\in\GL_{n+1}(\R)$ for the same reasons  given in \cite{BY}, in particular by Theorems~21.5~and~23.2 from \cite{MR2335496}. 

\begin{lemma}\label{dual lemma}
    Let $g\in \mathrm{GL}_{n+1}(\R)$ then 
    \begin{equation}
        \delta_1(g\Z^{n+1}) \delta_{n+1}(g^\star \Z^{n+1}) \asymp_{n} 1. 
    \end{equation}
\end{lemma}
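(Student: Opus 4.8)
The plan is to deduce Lemma~\ref{dual lemma} from the basic transference principle relating successive minima of a lattice and of its dual. Recall that for a lattice $\La$ in $\R^{n+1}$ with (Euclidean) dual lattice $\La^*$, Mahler's transference inequality gives
\begin{equation}\label{eq:mahler-transf}
\delta_i(\La)\,\delta_{n+2-i}(\La^*)\asymp_n 1\qquad(1\le i\le n+1),
\end{equation}
where $\delta_i$ denotes the $i$-th successive minimum with respect to the unit Euclidean ball. This is exactly the content of Theorems~21.5 and~23.2 in \cite{MR2335496} cited in the excerpt. Taking $i=1$ in \eqref{eq:mahler-transf} yields $\delta_1(\La)\,\delta_{n+1}(\La^*)\asymp_n 1$.

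Next I would identify $g^\star\Z^{n+1}$ with the Euclidean dual of $g\Z^{n+1}$, up to an isometry. By definition $g^\star=\sigma^{-1}(g^\top)^{-1}\sigma$, and $\sigma=\sigma_{n+1}$ is the long Weyl element, which is an orthogonal involution ($\sigma^\top=\sigma=\sigma^{-1}$). The Euclidean dual of the lattice $g\Z^{n+1}$ is $(g^\top)^{-1}\Z^{n+1}$ (the standard fact that the dual of $g\Z^{n+1}$ is $(g^{-\top})\Z^{n+1}$). Hence $g^\star\Z^{n+1}=\sigma^{-1}\big((g^\top)^{-1}\Z^{n+1}\big)$, and since $\sigma^{-1}$ is orthogonal, it preserves Euclidean norms, so it preserves the unit ball and therefore preserves all successive minima: $\delta_j(g^\star\Z^{n+1})=\delta_j\big((g^\top)^{-1}\Z^{n+1}\big)$ for every $j$. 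In particular $\delta_{n+1}(g^\star\Z^{n+1})=\delta_{n+1}\big((g\Z^{n+1})^*\big)$.

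Combining the two steps: with $\La=g\Z^{n+1}$ and $\La^*=(g^\top)^{-1}\Z^{n+1}$ its Euclidean dual, the $i=1$ case of \eqref{eq:mahler-transf} gives $\delta_1(g\Z^{n+1})\,\delta_{n+1}\big((g^\top)^{-1}\Z^{n+1}\big)\asymp_n 1$, and the orthogonality of $\sigma$ lets us replace the second factor by $\delta_{n+1}(g^\star\Z^{n+1})$, yielding the claimed $\delta_1(g\Z^{n+1})\,\delta_{n+1}(g^\star\Z^{n+1})\asymp_n 1$. The case $g\in\GL_{n+1}(\R)$ versus $\SL_{n+1}(\R)$ makes no difference here since \eqref{eq:mahler-transf} holds for arbitrary full-rank lattices, the absolute value of the determinant cancelling out of the homogeneous quantity $\delta_1\delta_{n+1}$ in the relevant sense.

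The argument is essentially a bookkeeping exercise once \eqref{eq:mahler-transf} is granted; the only point requiring a little care — and the one I would flag as the main (if modest) obstacle — is the precise normalisation in the identification of $g^\star\Z^{n+1}$ with the Euclidean dual lattice, namely checking that $\sigma$ is genuinely orthogonal (so that it does not distort the unit ball used to define the $\delta_j$) and that the convention for the dual matrix matches the convention for the dual lattice. Since $\sigma$ has entries $0$ and $1$ with exactly one $1$ in each row and column (a permutation matrix), orthogonality is immediate, so this obstacle is easily dispatched, and one may simply cite \cite{MR2335496} as the excerpt already does.
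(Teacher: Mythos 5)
Your proof is correct and follows the same route the paper intends: Mahler's transference theorem for the Euclidean unit ball, together with the observation that $\sigma$ is an orthogonal (permutation) matrix fixing $\Z^{n+1}$, so that $g^\star\Z^{n+1}$ is the Euclidean dual lattice $(g^\top)^{-1}\Z^{n+1}$ up to an isometry. The paper itself gives no argument and merely cites \cite[Lemma 3.3]{BY} and Theorems 21.5 and 23.2 of \cite{MR2335496}; your writeup simply makes that citation explicit.
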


\subsection{Further assumptions and notation}
Write $n=d+m$ with $d,m\in\Z$. We will assume throughout that $1\le d<n$ as otherwise the manifold is essentially $\R^n$.
As we already mentioned before, in view of the nonsingularity assumption on $\f$, by the Implicit Function Theorem, there will also be no loss of generality in assuming that $\f$ has the following form 
\begin{equation}\label{Monge}
\f=(\bx,f(\bx)),\quad\text{where }f(\bx)=(f_1(\bx),\dots, f_m(\bx))\text{ and } \bx=(x_1,\dots,x_d)\,.
\end{equation} Since $d<n$ and $\f$ is nondegenerate, we must have that $\f$ is at least $C^2$ in the context of our theorems.  Furthermore, by shrinking $\bU$ if necessary, we can assume without loss of generality that for some constant $M\ge 1$
 \begin{equation}\label{condi2}
    \max_{1\leq k\leq m}\max_{1\leq i,j\leq d}\sup_{\bx\in\bU}\max \left\{\vert {\partial_i\mathit{f}_k}(\bx)\vert,\vert {\partial^2_{i,j}\mathit{f}_k}(\bx)\vert\right\} \leq M.
 \end{equation}
For a vector $\by\in\R^n$ and  $1\leq i\leq n$, we denote the $i$-th coordinate of $\by$ by $y_i$.  Given $\bx\in\R^k$ for some $k$, by $B(\bx,r)$ (resp. $B[\bx,r]$), we denote the open ball (resp. closed ball) in $\R^k$ centred at $\bx$ of radius $r$, which is obtained using the sup norm on $\R^k$. By $\Vert \cdot\Vert$, we denote the sup norm in $\R^k.$ By $[~\cdot~]^k$ we denote the hyper-cube (i.e. ball with sup norm) in $\R^k$ of the specified radius and centred at $\bf0$. For example, $[\pi]^k$ will stand for $[-\pi,\pi]^k$.

Throughout, $\mathrm{I}_k$ will denote the identity matrix of $\GL_k(\R)$. Now we recall some matrices introduced in \cite{BY}. For every $\bx\in\bU$ define
$$
J(\bx)=\big[\partial_i\fff_j(\bx)\big]_{1\leq i\leq d,\, 1\leq j \leq m}\,, $$ 
which is the Jacobian of the map $\bx\to f(\bx)=(f_1(\bx),\dots,f_m(\bx))$.
Further, following \cite{BY} for any given $\bx\in\bU$ define the following elements of $\SL_{n+1}(\R)$:
\begin{align*}
&z(\bx)=\begin{bmatrix}
     \mathrm{I}_m & -\sigma_m^{-1}J(\bx)\sigma_d & 0\\
     0 & \mathrm{I}_d & 0\\
     0 & 0 & 1
\end{bmatrix},\\[2ex]
&u({\bx})=\begin{bmatrix}
     \mathrm{I}_n & \sigma_n^{-1}\f(\bx)^\top\\
     \mathbf{0} & 1
\end{bmatrix}\,,\\[2ex]
&u_1(\bx)=z(\bx) u(\bx)\,.
\end{align*}
It can be easily seen that $u_1(x)$ is the following upper triangular matrix:\begin{equation}\label{def_u_1}
u_1(\bx)=\begin{bmatrix}
    1 & 0 & \cdots & 0 & -\partial_d f_m(\bx) & \cdots &  -\partial_1 f_m(\bx) & f_{m}(\bx)- \sum_{i=1}^d x_i\partial_i f_m(\bx)\\
    \vdots & \vdots & \vdots & \vdots & \vdots & \vdots & \vdots & \vdots \\
    0 & 0 & \cdots & 1 & -\partial_d f_1(\bx) & \cdots &  -\partial_1 f_1(\bx) & f_{1}(\bx)- \sum_{i=1}^d x_i\partial_i f_1(\bx)\\
    0 & 0 & \cdots & 0 & 1 & 0 & \cdots & x_{d}\\
     \vdots & \vdots & \vdots & \vdots & \vdots & \vdots & \vdots & \vdots \\
     0 & 0 & \cdots & 0 & 0 &  \cdots & 1 &  x_{1}\\
     0 & 0 & \cdots & 0 & 0 &  \cdots & 0 &  1    
\end{bmatrix}.\end{equation}

Let us also introduce the following set closely related to the counting function $N^\star_{\ta}(B;\varepsilon, t)$. For every $\varepsilon>0, B\subset \bU\subset \R^d$ and $t\in \N$, we define 
\begin{equation}\label{eq:N with one var}
  \mathcal{N}_{\ta}(t,\varepsilon, B):=\left\{(q,\bp_d,\bp_m)\in \N\times \Z^d\times\Z^m~\left|~\begin{aligned} &\frac{\bp_d+\ta_d}{q}\in B,\; e^{t-1}\leq q\leq e^t\\
  &\left\Vert q\fff\left(\frac{\bp_d+\ta_d}{q}\right)-\ta_m-\bp_m\right\Vert
  \leq \varepsilon\end{aligned}
  \right.
  \right\}\,,
\end{equation}
where $f$ is given as in \eqref{Monge} and $\ta=(\ta_d,\ta_m)$ with $\ta_d\in\R^d$ and $\ta_m\in\R^m$.
When $t>0$ is large enough and $B\subset \bU,$ one can see using Taylor's expansion that
\begin{equation}\label{connection} \#\mathcal{N}_{\ta}(t,\varepsilon, B)\ll N^\star_{\ta}(B;\varepsilon, t).\end{equation} 


\section{Lower bounds}

First we recall some definitions from \cite{BVVZ2018} adapted to our needs and notation. 
Given $\varepsilon>0$, $\rho>0$, $t\in\N$ and $B\subset \bU$, define
\begin{equation}
    \Delta_{\ta}(t,\varepsilon, B, \rho):=\bigcup_{(q,\bp_d,\bp_m)\in \mathcal{N}_{\ta}(t,\varepsilon, B)} B\left(\frac{\bp_d+\ta_d}{q},\rho\right),
\end{equation}
where $\mathcal{N}_{\ta}(t,\varepsilon, B)$ is given by \eqref{eq:N with one var}. Further, given also $\cc>0$, define
$$
a_{\varepsilon, t, \cc}:=\cc^{-1}\diag\{\underbrace{\varepsilon,\dots,\varepsilon}_m,\underbrace{(\varepsilon^m e^t)^{-1/d},\dots,(\varepsilon^m e^t)^{-1/d}}_d,\cc^{n+1}e^t\}
$$
and the set
\begin{equation}
    \mathfrak{G}(\cc, t, \varepsilon):=\{\bx\in \bU~:~ \delta_1(a_{\varepsilon, t, \cc}^{-1} u_1(\bx)\Z^{n+1})\ge \cc\}.
\end{equation}

The matrix $u_1(x)$ coincides with the matrix $G$ defined in \cite{BVVZ2018} up to some permutations of rows and columns and sign change in some columns. Hence, the set $\mathfrak{G}(\cc, t, \varepsilon)$ given above coincides with the set $\mathcal{G}(c,Q,\psi)$ defined in \cite{BVVZ2018} upon setting $c=\cc^{n+1}$, $Q=e^t$ and $\psi=\varepsilon$. In particular, we can now restate Corollary 2.2 from \cite{BVVZ2018} as follows.

\begin{lemma}\label{lemma:BVVZ2.1}
Let $\f,\bU$ be as before. Suppose that $\cc\in(0,1]$, $M$, $\tilde t$, $\tilde\varepsilon>0$ are given such that \begin{equation}\label{eqn_lower_bound 2 on eps}
        \tilde\varepsilon\geq K_0 e^{-\frac{\tilde t(d+2)}{2m+d}},\qquad \text{ with } K_0\gg_{n,d}\frac{1}{\cc^{n+1}}\left( 1+ \frac{M}{\cc^{n+1} }\right). 
    \end{equation}
Suppose that \eqref{condi2} holds. Let $t$, $\varepsilon$ and $\rho$ be defined from the following equations: \begin{equation}\label{eqn: t and eps}
    e^t=\frac{e^{\tilde t}}{4(n+1)},\qquad  \varepsilon=\frac{\tilde\varepsilon}{(1+ \frac{Md^2}{\cc^{n+1}})\frac{n+1}{\cc^{n+1}}},
\end{equation}
\begin{equation}
\rho=\frac{1}{2\cc^{n+1}}\left(\varepsilon^m e^{t(d+1)}\right)^{-1/d} = C_0\left(\tilde\varepsilon^m e^{\tilde t(d+1)}\right)^{-1/d},
\end{equation} 
where $C_0>0$ depends on $n$, $m$, $d$, $M$ and $\cc$ only.
Then for any ball $B\subset \bU,$ we have that 
\begin{equation}
     \mathfrak{G}(\cc, t, \varepsilon)\cap B^{\rho}\subset \Delta_{\ta}(\tilde t, \tilde\varepsilon, B, \rho),
\end{equation}
    where $B^\rho$ contains all $x\in B$ such that $B(x,\rho)\subset B.$
\end{lemma}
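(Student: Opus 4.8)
The plan is to derive this as essentially a translation of \cite[Corollary 2.2]{BVVZ2018} into the present notation, via the dictionary already set up in the paragraph preceding the statement. First I would make the identification of objects precise: the matrix $u_1(\bx)$ agrees with the matrix $G=G(\bx)$ of \cite{BVVZ2018} up to a fixed permutation of rows/columns and sign changes in some columns, so that $\delta_1\big(a_{\varepsilon,t,\cc}^{-1}u_1(\bx)\Z^{n+1}\big)=\delta_1\big(\tilde a\, G(\bx)\Z^{n+1}\big)$ for the correspondingly permuted diagonal matrix $\tilde a$; under the substitutions $c=\cc^{n+1}$, $Q=e^t$, $\psi=\varepsilon$ this $\tilde a$ is exactly the diagonal matrix used in \cite{BVVZ2018} to define $\mathcal{G}(c,Q,\psi)$, and hence $\mathfrak{G}(\cc,t,\varepsilon)=\mathcal{G}(\cc^{n+1},e^t,\varepsilon)$. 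Next I would check that the counting set $\mathcal{N}_{\ta}(t,\varepsilon,B)$ of \eqref{eq:N with one var} matches the rational-point set appearing in \cite[Corollary 2.2]{BVVZ2018}: a point $(q,\bp_d,\bp_m)$ lies in $\mathcal{N}_{\ta}$ precisely when $\tfrac{\bp_d+\ta_d}{q}\in B$, $e^{t-1}\le q\le e^t$, and the Monge form $\f=(\bx,f(\bx))$ makes $\|q\f(\tfrac{\bp_d+\ta_d}{q})-\ta-\widetilde{\bp}\|\le\varepsilon$ equivalent to the displayed condition on $f$; this is the inhomogeneous analogue of the set used in \cite{BVVZ2018}, and the union $\Delta_{\ta}(t,\varepsilon,B,\rho)$ is then literally their $\rho$-neighbourhood of these points.

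With the dictionary in place, the content of the lemma is the inclusion $\mathfrak{G}(\cc,t,\varepsilon)\cap B^\rho\subset\Delta_{\ta}(\tilde t,\tilde\varepsilon,B,\rho)$, which is the assertion of \cite[Corollary 2.2]{BVVZ2018} after the parameter rescaling $\tilde t\to t$, $\tilde\varepsilon\to\varepsilon$ given by \eqref{eqn: t and eps}. The point of that rescaling is standard: the set $\mathfrak{G}$ is defined with the ``clean'' parameters $(t,\varepsilon)$ (so that $a_{\varepsilon,t,\cc}$ has determinant a fixed power of $\cc$ and the successive-minimum threshold is $\cc$), whereas the Dirichlet-type box argument producing an actual rational point near $\bx$ naturally works with slightly inflated parameters $(\tilde t,\tilde\varepsilon)$ — the factor $4(n+1)$ in $e^{\tilde t}=4(n+1)e^t$ absorbs the loss in passing from ``$q\le e^t$'' type constraints to dyadic blocks and from the ball $B$ to $B^\rho$, and the constant relating $\tilde\varepsilon$ to $\varepsilon$ absorbs the bounds \eqref{condi2} on the first and second derivatives of $f$ (this is where $M$ and $d$ enter). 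I would then verify that the lower bound \eqref{eqn_lower_bound 2 on eps} on $\tilde\varepsilon$, with $K_0\gg_{n,d}\cc^{-(n+1)}(1+M\cc^{-(n+1)})$, is exactly the hypothesis ``$\psi\ge K_0 Q^{-(d+2)/(2m+d)}$'' of \cite[Corollary 2.2]{BVVZ2018} under the above substitutions, and that the formula for $\rho$ is their choice of $\rho$ rewritten: indeed $\rho=\tfrac{1}{2\cc^{n+1}}(\varepsilon^m e^{t(d+1)})^{-1/d}$, and substituting \eqref{eqn: t and eps} turns $\varepsilon^m e^{t(d+1)}$ into a fixed constant (depending on $n,m,d,M,\cc$) times $\tilde\varepsilon^m e^{\tilde t(d+1)}$, giving the stated $\rho=C_0(\tilde\varepsilon^m e^{\tilde t(d+1)})^{-1/d}$.

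The only genuinely non-routine point is making the identification of $u_1(\bx)$ with the matrix $G$ of \cite{BVVZ2018} sufficiently careful that the diagonal rescaling is transported correctly: one must track which block of $a_{\varepsilon,t,\cc}$ (the $\varepsilon$'s, the $(\varepsilon^m e^t)^{-1/d}$'s, or the $\cc^{n+1}e^t$) acts on which coordinate after the permutation, and confirm that sign changes in columns do not affect $\delta_1$ of the resulting lattice (they don't, being realised by an element of $\mathrm{GL}_{n+1}(\Z)$, in fact by $\pm1$ on coordinates). I expect this bookkeeping, rather than any new estimate, to be the main obstacle; once it is done, the inclusion is a verbatim application of \cite[Corollary 2.2]{BVVZ2018}. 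I would therefore structure the proof as: (i) state the correspondence $u_1\leftrightarrow G$ and deduce $\mathfrak{G}(\cc,t,\varepsilon)=\mathcal{G}(\cc^{n+1},e^t,\varepsilon)$; (ii) match $\mathcal{N}_{\ta}$ and $\Delta_{\ta}$ with the objects of \cite{BVVZ2018}; (iii) check that hypotheses \eqref{eqn_lower_bound 2 on eps}, \eqref{eqn: t and eps} and the formula for $\rho$ are precisely the rescaled hypotheses of \cite[Corollary 2.2]{BVVZ2018}; (iv) quote that corollary to conclude.
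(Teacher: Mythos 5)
Your proposal is correct and takes essentially the same approach as the paper: the paper gives no separate proof of this lemma but simply notes the dictionary $u_1(\bx)\leftrightarrow G(\bx)$, $c=\cc^{n+1}$, $Q=e^t$, $\psi=\varepsilon$, and then declares the lemma to be a restatement of \cite[Corollary 2.2]{BVVZ2018}. Your step-by-step unpacking of that dictionary (and the rescaling $(t,\varepsilon)\mapsto(\tilde t,\tilde\varepsilon)$, the role of $M$ from \eqref{condi2}, and the observation that permutations and sign changes preserve $\delta_1$) is precisely the bookkeeping the paper leaves implicit.
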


By \eqref{eqn: Mincon}, for any matrix $g\in\mathrm{SL}_{n+1}(\R)$ if $\delta_1(g\Z^{n+1})\le \cc$ then
$$
\delta_{n+1}(g\Z^{n+1})\gg_n \cc^{-1/n}\,.
$$
Hence
\begin{align}
\nonumber\bU\setminus \mathfrak{G}(\cc, t, \varepsilon)&=
\{\bx\in \bU~:~ \delta_1(a_{\varepsilon, t, \cc}^{-1} u_1(\bx)\Z^{n+1})<\cc\}\\[2ex]
\nonumber&\subset\{\bx\in \bU~:~ \delta_{n+1}(a_{\varepsilon, t, \cc}^{-1} u_1(\bx)\Z^{n+1})\gg_n  \cc^{-1/n}\}\\[2ex]
&\stackrel{\text{Lemma } \ref{dual lemma}}{\subset}\{\bx\in \bU~:~ \delta_{1}((a_{\varepsilon, t, \cc}^{-1})^* u_1^*(\bx)\Z^{n+1})\ll_n  \cc^{1/n}\}\,.\label{eq5.8}
\end{align}
Note that
$$
(a_{\varepsilon, t, \cc}^{-1})^\star=\sigma^{-1}a_{\varepsilon, t, \cc}\sigma=
\cc^{-1}\diag\{\cc^{n+1}e^t,\underbrace{(\varepsilon^m e^t)^{-1/d},\dots,(\varepsilon^m e^t)^{-1/d}}_d,\underbrace{\varepsilon,\dots,\varepsilon}_m\},
$$
and observe that $u_1(\bx)^\star$ is given by 
    \begin{equation}\label{eqn:dual u_1(x)}
        u_1^\star(\bx)=\begin{bmatrix}
            1 & -\bx & -\fff(\bx)\\
            0 & \mathrm{I}_d & J(\bx)\\
            0 & 0   & \mathrm{I}_m
        \end{bmatrix}\,.
    \end{equation}
Therefore, there is a constant $c_n>0$ depending on $n$ only, and $c_{n,\f}>0$ depending on $n$ and $\f$ such that \eqref{eq5.8} is contained in $\mathfrak{S}_{\f}(\delta,K,\bT)$ with
$$
\delta=\cc^{1/n}c_n\cc^{-n}e^{-t},\quad K=\cc^{1/n}c_n\cc(\varepsilon^m e^t)^{1/d},\quad T_1=\dots=T_n=\cc^{1/n}c_{n,\f} \cc\varepsilon^{-1}\,.
$$ 
Now we state the key proposition in this section, which follows from Theorem \ref{BKM_new} following the discussion above.

\begin{proposition}\label{key_propo}
Let $\f:\bU\subset \R^d \to\R^n$ be a map of the form \eqref{Monge}, where $\bU$ is an open set in $\R^d.$ Suppose that $\f$ is nondegenerate at $\bx_0\in\bU$. Let $0<\kappa<1$. Then there exists a sufficiently small neighborhood $V\subset\bU$ of $\bx_0$, and constants $0<\cc<1$ and $K_0>0$ such that for any ball $B\subset V$, there exists $t_0=t(B, n,\f,\kappa)$ such that for all $t\geq t_0$, and $K_0 e^{-\eta t}<\varepsilon<1$,  where $\eta$ is as in \eqref{def:eta}, we have that 
    \begin{equation}
        \Leb_d(B\setminus  \mathfrak{G}(\cc, t, \varepsilon))\leq \kappa \Leb_d(B)\,.
    \end{equation}
\end{proposition}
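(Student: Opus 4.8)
The plan is to deduce Proposition~\ref{key_propo} directly from Theorem~\ref{BKM_new} via the chain of inclusions established in the discussion preceding the statement. Recall that \eqref{eq5.8} shows that $\bU\setminus\mathfrak{G}(\cc,t,\varepsilon)$ is contained in a set of the form $\mathfrak{S}_{\f}(\delta,K,\bT)$ with the explicit choices
$$
\delta=c_n\cc^{1/n-n}e^{-t},\qquad K=c_n\cc^{1/n+1}(\varepsilon^m e^t)^{1/d},\qquad T_1=\dots=T_n=c_{n,\f}\cc^{1/n+1}\varepsilon^{-1}.
$$
So the first step is to fix $\kappa\in(0,1)$, choose $\cc=\cc(\kappa)$ small (to be pinned down at the end), and verify that for these parameters the hypotheses \eqref{eqn5.2} of Theorem~\ref{BKM_new} hold once $t$ is large and $\varepsilon$ is in the stated range. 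The constraints $0<\delta\le1$, $T_i\ge1$ are immediate for $t$ large (and $\varepsilon<1$); the inequality $\delta^n<K\frac{T_1\cdots T_n}{\max_i T_i}=K\,T_1^{n-1}$ amounts, after substituting, to a bound of the form $e^{-nt}\ll \cc^{?}\varepsilon^{m/d-(n-1)}e^{t/d}$, which holds for all large $t$ since $\varepsilon>K_0e^{-\eta t}$ with $\eta$ as in \eqref{def:eta} keeps $\varepsilon^{m/d-(n-1)}$ from decaying too fast. I would check this case by case ($d>1$ versus $d=1$), since the definition of $\eta$ is tailored precisely so that the error term in \eqref{eqn5.3} becomes negligible.

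The second step is to feed these parameters into the bound \eqref{eqn5.3} of Theorem~\ref{BKM_new}, applied to a ball $B\subset B_0$ of radius $r$. The main term $\delta\prod_{i=1}^d\min\{K,T_i\}\cdot(T_{d+1}\cdots T_n)\Leb_d(B)$ becomes, after substitution, $\ll c_n c_{n,\f}^m \cc^{\gamma}\Leb_d(B)$ for some fixed exponent $\gamma=\gamma(n,d)>0$ coming from the powers of $\cc^{1/n}$; the key point is that the scale-dependent factors $e^{-t}$, $(\varepsilon^m e^t)^{1/d}$, $\varepsilon^{-1}$ combine to a bounded constant. More precisely $\delta\cdot(\varepsilon^m e^t)^{1/d}\cdot\varepsilon^{-m}$ — which is essentially what appears once one notes $\min\{K,T_i\}$ is $K$ in the relevant regime — telescopes to a $t$- and $\varepsilon$-independent quantity times a positive power of $\cc$. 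Thus choosing $\cc$ small forces this main term below $\tfrac{\kappa}{2}\Leb_d(B)$. For the error term $E(\delta\min\{K,r^{-1}\}\frac{T_1\cdots T_n}{\max_i T_i})^{\alpha}$, after substitution one gets $E\,(\text{const}\cdot\varepsilon^{?}e^{-ct})^{\alpha}$ with a strictly negative net power of $e^t$ once $\varepsilon>K_0e^{-\eta t}$; hence for $t\ge t_0(B,n,\f,\kappa)$ this is $\le\tfrac{\kappa}{2}\Leb_d(B)$ as well. Adding the two bounds gives $\Leb_d(B\setminus\mathfrak{G}(\cc,t,\varepsilon))\le\kappa\Leb_d(B)$.

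I expect the main obstacle to be purely bookkeeping: tracking the various powers of $\cc^{1/n}$, $c_n$, $c_{n,\f}$, $E$ and $M$ through the substitutions and confirming that (i) the main term's net power of $\cc$ is genuinely positive, so that shrinking $\cc$ helps rather than hurts, and (ii) the exponent of $e^t$ in the error term is negative for the full range $\varepsilon>K_0e^{-\eta t}$ — this last point is exactly why $\eta=\frac{1}{n-1}$ when $d>1$ and $\eta=\frac{3}{2n-1}$ when $d=1$, and it is where the sharper error term $\min\{K,r^{-1}\}$ in Theorem~\ref{BKM_new} (as opposed to the cruder $\min\{K,T^{1/2}\}$) is essential. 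The choice of the neighbourhood $V$ is forced to be $V\subset B_0$, the ball furnished by Theorem~\ref{BKM_new}, and the constant $\cc$ depends only on $\kappa$ (and $n$, $d$, $\f$), as required. One should also note that $\cc\le1$ is needed to justify the passage through \eqref{eqn: Mincon} and Lemma~\ref{dual lemma}, which is already built into the setup.
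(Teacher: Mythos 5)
Your plan is the paper's plan: the paper dispatches Proposition~\ref{key_propo} in a single sentence by citing Theorem~\ref{BKM_new} together with the chain of inclusions \eqref{eq5.8} and the explicit $\delta,K,T_i$ that follow it, and your bookkeeping for the main term is correct — the $t,\varepsilon$-dependent factors in $\delta\prod_i\min\{K,T_i\}\,T_{d+1}\cdots T_n$ telescope to a constant times $\cc^{1+1/n}$ (the case $\min\{K,T_i\}=T_i$ works just as well, since there $e^{-t}\varepsilon^{-n}\le 1$), so shrinking $\cc$ kills it.

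One point in your analysis of the error term is not right, though, and it matters precisely because it is where the cutoff $\eta$ enters. You assert that after substitution the quantity inside $(\cdot)^\alpha$ has a \emph{strictly negative} net power of $e^t$ once $\varepsilon>K_0e^{-\eta t}$. For $d>1$ this is false at the boundary: there the relevant regime (for fixed $B$ and large $t$) has $\min\{K,r^{-1}\}=r^{-1}$, the exponent of $\cc$ in $\delta\,r^{-1}T_1^{n-1}$ is exactly $0$, and $e^{-t}\varepsilon^{-(n-1)}$ evaluated at $\varepsilon=K_0e^{-t/(n-1)}$ equals $K_0^{-(n-1)}$ — a constant, not a decaying exponential. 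So the error term does not tend to $0$ as $t\to\infty$; it is only \emph{bounded} by $E\bigl(c\,r^{-1}K_0^{-(n-1)}\bigr)^{\alpha}$, and making this $\le\tfrac{\kappa}{2}\Leb_d(B)$ requires taking $K_0$ large (and, as written, large in a way that depends on $r=\mathrm{rad}(B)$, since $\Leb_d(B)\asymp r^d$ while the error bound carries no $\Leb_d(B)$ factor). In other words, for $d>1$ the value $\eta=\tfrac1{n-1}$ is exactly the critical exponent at which the error term stops decaying, and the freedom used to control it is the lower-bound constant $K_0$ rather than letting $t\to\infty$; your appeal to "for $t\ge t_0$ this is $\le\kappa/2\Leb_d(B)$" does not by itself close this. (For $d=1$ the picture is different: there $\min\{K,r^{-1}\}=K$ in the relevant range, $\delta K T_1^{n-1}$ collapses to $c\,\cc^{(n+1)/n}$, and the choice $\eta=\tfrac{3}{2n-1}$ is what guarantees $K$ stays bounded so that the $\min$ really is $K$; so there $\cc$ rather than $K_0$ does the work.) You should redo the error-term estimate separating the cases $d=1$ and $d>1$, using the $\min$ genuinely rather than replacing it by one branch, and make explicit which of $\cc$, $K_0$, $t_0$ absorbs the bound in each case.
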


\begin{theorem}\label{main_div_1}
Let $\f:\bU\subset \R^d \to\R^n$ be a map of the form \eqref{Monge}, where $\bU$ is an open set in $\R^d$, $\ta\in\R^n$. Suppose $\f$ is nondegenerate at $\bx_0\in\bU.$  Then there exists a small open ball $V$ centered at $\bx_0$, and constants $C_0, K_0>0$ such that for every ball $B\subset V$, there exists $t_0=t_0(B,n, \f)$ such that for every $t\geq t_0$ and $\varepsilon\in(0,1)$ satisfying 
\begin{equation}\label{eqn: cond on eps}
\varepsilon> K_0 e^{-\eta t} \quad \text{with }\eta \text{ as in } \eqref{def:eta}
\end{equation}
we have that
\begin{equation}
    \Leb_d(\Delta_{\ta}(t, \varepsilon, B, \rho))\geq  \tfrac{1}{2}\Leb_d(B),
\end{equation}
where \begin{equation}\label{eqn: define of rho}
    \rho=\left(\frac{C_{0}}{\varepsilon^{m}e^{(d+1)t}}\right)^{-\frac{1}{d}}.
\end{equation}
\end{theorem}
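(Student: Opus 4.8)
The plan is to derive Theorem~\ref{main_div_1} from Proposition~\ref{key_propo} and Lemma~\ref{lemma:BVVZ2.1} by a straightforward combination. First I would fix $\kappa=\tfrac14$ and apply Proposition~\ref{key_propo} to the nondegenerate point $\bx_0$; this produces a neighbourhood $V\subset\bU$ of $\bx_0$, constants $0<\cc<1$ and $K_0'>0$, and for each ball $B\subset V$ a threshold $t_0'=t_0'(B,n,\f,\tfrac14)$ such that
\begin{equation*}
\Leb_d\big(B\setminus \mathfrak{G}(\cc,t,\varepsilon)\big)\le \tfrac14\Leb_d(B)
\end{equation*}
whenever $t\ge t_0'$ and $K_0'e^{-\eta t}<\varepsilon<1$. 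The constant $\cc$ here is fixed once and for all, and I would also record the constant $M$ from \eqref{condi2} (valid after possibly shrinking $\bU$, hence $V$).

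Next I would address the mismatch between the parameters $(t,\varepsilon,\rho)$ appearing in $\mathfrak{G}$ and $\Delta_\ta$ in Lemma~\ref{lemma:BVVZ2.1} versus the $(\tilde t,\tilde\varepsilon)$ in its hypothesis. Given the target pair $(t,\varepsilon)$ that we wish to feed into $\mathfrak{G}(\cc,t,\varepsilon)$, I would set $\tilde t$ and $\tilde\varepsilon$ by inverting \eqref{eqn: t and eps}, i.e. $e^{\tilde t}=4(n+1)e^{t}$ and $\tilde\varepsilon=(1+\tfrac{Md^2}{\cc^{n+1}})\tfrac{n+1}{\cc^{n+1}}\varepsilon$; these are the quantities with respect to which $\Delta_\ta(\tilde t,\tilde\varepsilon,B,\rho)$ is expressed, while $\rho$ is exactly \eqref{eqn: define of rho} with $C_0$ an appropriate multiple of $\cc^{n+1}$ (matching $\rho=\tfrac{1}{2\cc^{n+1}}(\varepsilon^m e^{t(d+1)})^{-1/d}$). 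I would then check that the lower bound \eqref{eqn_lower_bound 2 on eps} on $\tilde\varepsilon$, namely $\tilde\varepsilon\ge K_0 e^{-\tilde t(d+2)/(2m+d)}$, is implied by the hypothesis \eqref{eqn: cond on eps} for a suitable final constant $K_0$; since $\tfrac{d+2}{2m+d}\le \tfrac1{n-1}$ for $d\ge1$ (as $2m+d=2n-d\ge n-1$... more precisely one checks $\tfrac{d+2}{2m+d}\le\eta$ in both the $d>1$ and $d=1$ cases), the exponential decay rate allowed by $\eta$ is at least as fast, so $\varepsilon>K_0e^{-\eta t}$ indeed forces $\tilde\varepsilon$ above the required threshold once $K_0$ absorbs the constant $K_0$ of Lemma~\ref{lemma:BVVZ2.1} and the bounded factors relating $(t,\varepsilon)$ to $(\tilde t,\tilde\varepsilon)$. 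This comparison of exponents is the one spot requiring a small case check and is the main (minor) obstacle; everything else is bookkeeping.

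With both inputs available, the conclusion follows: by Lemma~\ref{lemma:BVVZ2.1}, $\mathfrak{G}(\cc,t,\varepsilon)\cap B^{\rho}\subset \Delta_\ta(\tilde t,\tilde\varepsilon,B,\rho)$, so
\begin{equation*}
\Leb_d\big(\Delta_\ta(\tilde t,\tilde\varepsilon,B,\rho)\big)\ge \Leb_d\big(\mathfrak{G}(\cc,t,\varepsilon)\cap B^{\rho}\big)\ge \Leb_d(B^\rho)-\Leb_d\big(B\setminus\mathfrak{G}(\cc,t,\varepsilon)\big)\ge \Leb_d(B^\rho)-\tfrac14\Leb_d(B).
\end{equation*}
Finally I would note that $\rho\to0$ as $t\to\infty$ (because $\varepsilon>K_0e^{-\eta t}$ keeps $\varepsilon^m e^{(d+1)t}\to\infty$), so for all $t$ larger than some $t_0=t_0(B,n,\f)\ge t_0'$ we have $\Leb_d(B\setminus B^\rho)\le\tfrac14\Leb_d(B)$, whence $\Leb_d(B^\rho)\ge\tfrac34\Leb_d(B)$ and therefore $\Leb_d(\Delta_\ta(\tilde t,\tilde\varepsilon,B,\rho))\ge\tfrac12\Leb_d(B)$. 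Relabelling $\tilde t\mapsto t$ and $\tilde\varepsilon\mapsto\varepsilon$ (which only rescales $K_0$ and $C_0$ by the fixed bounded factors above) gives precisely the statement of Theorem~\ref{main_div_1}.
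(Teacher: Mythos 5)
Your proposal follows the same route as the paper's own proof (Proposition~\ref{key_propo} + Lemma~\ref{lemma:BVVZ2.1}, then a measure count), and the overall structure is sound, but the key inequality is stated in the wrong direction. You write that $\tfrac{d+2}{2m+d}\le\eta$, with $\tfrac{d+2}{2m+d}\le\tfrac1{n-1}$ when $d>1$; however, using $2m+d=2n-d$, the inequality $\tfrac{d+2}{2n-d}\le\tfrac1{n-1}$ is equivalent to $dn\le 2$, which fails for all $d\ge 2$, $n\ge 3$ (the only case it can hold is $d=1$, $n=2$). The correct inequality is the opposite one, $\eta\le\tfrac{d+2}{2m+d}$ (with equality when $d=1$), which is equivalent to $dn\ge 2$ and does hold. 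This is also the inequality the paper invokes: ``since $e^{-\eta t}\ge e^{-\frac{t(d+2)}{2m+d}}$''.

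Your verbal explanation compounds the error. You argue that ``the exponential decay rate allowed by $\eta$ is at least as fast, so $\varepsilon>K_0e^{-\eta t}$ indeed forces $\tilde\varepsilon$ above the required threshold'' — but if $\eta$ were the larger exponent, $e^{-\eta t}$ would be the \emph{smaller} quantity, so the hypothesis $\varepsilon>K_0 e^{-\eta t}$ would be \emph{weaker}, not stronger, and would not imply $\tilde\varepsilon\ge K_0 e^{-\tilde t(d+2)/(2m+d)}$. What one actually needs is that the hypothesis on $\varepsilon$ be \emph{at least as restrictive} as the one demanded by Lemma~\ref{lemma:BVVZ2.1}, i.e.\ $e^{-\eta t}\gg e^{-\tfrac{t(d+2)}{2m+d}}$, i.e.\ $\eta\le\tfrac{d+2}{2m+d}$. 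Once the inequality and the accompanying logic are corrected, the remainder of your argument (fixing $\kappa=\tfrac14$, inverting \eqref{eqn: t and eps}, absorbing bounded factors into $K_0$ and $C_0$, showing $\rho\to0$ to get $\Leb_d(B\setminus B^\rho)\le\tfrac14\Leb_d(B)$ for large $t$, and relabelling) matches the paper's proof and is correct.
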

\begin{proof}
    Suppose from Proposition \ref{key_propo} we get $V\subset \bU$ centered at $\bx_0$, and $K^1_0>0,\delta>0$ for $\kappa= \frac{1}{3}$. Let us take $K_0>K^1_0$ and $K_0$ satisfies \eqref{eqn_lower_bound 2 on eps}. Using Proposition \ref{key_propo}, for every ball $B\subset V$, there exists $t_0>0$ such that for all $t\geq t_0$, 
\begin{equation}\label{eqn:upper bound complement}
 \Leb_d(B\setminus \mathfrak{G}(\delta, t, \varepsilon) )\leq \tfrac{1}{3} \Leb_d(B).
\end{equation}
Now applying Lemma \ref{lemma:BVVZ2.1}, for $\tilde\varepsilon$ and $\tilde t$ as in \eqref{eqn: t and eps}, since $e^{-\eta t}\geq e^{-\frac{t(d+2)}{2m+d}}$ we get that
\begin{equation}\label{lower_bound}
\Leb_d(\Delta(\tilde t, \tilde\varepsilon, B, \rho)) \geq \Leb_d(\mathfrak{G}(\delta, t, \varepsilon)\cap B^{\rho})\geq \Leb_d(B\cap \mathfrak{G}(\delta, t, \varepsilon))-\Leb_d(B\setminus B^{\rho}).
\end{equation}
By \eqref{eqn: cond on eps} and \eqref{eqn: define of rho}, we have that $\rho\to0$ as $t\to\infty$. Hence $\Leb_d(B\setminus B^{\rho})\to0$ as $t\to\infty$ and hence for all sufficiently large $t$ we have that $\Leb_d(B\setminus B^{\rho})\le\tfrac16\Leb_d(B)$. Combining this with \eqref{eqn:upper bound complement} and \eqref{lower_bound}, we get that
   $$
\Leb_d(\Delta(\tilde t, \tilde\varepsilon, B, \rho)) \geq \tfrac{1}{2}\Leb_d(B)\,.
$$
Since $\varepsilon\asymp \tilde \varepsilon$ and $e^t\asymp e^{\tilde t}$, the required result follows.
\end{proof}


As a corollary of the above theorem, we get the following:
\begin{corollary}\label{coro: lower}
    Let $\f,\ta,\bU, B, \bx_0, \varepsilon, t, C_0$ be the same as in Theorem~\ref{main_div_1}. Then 
    \begin{equation}
        \# \mathcal{N}_{\ta}(t,\varepsilon, B) \geq  \frac{\varepsilon^m e^{(d+1)t}}{2 V_dC_0} \Leb_d(B),
    \end{equation}
    where $V_d:=\Leb_d(B(0,1)).$
\end{corollary}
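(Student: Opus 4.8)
The plan is to read off the bound directly from Theorem~\ref{main_div_1} by a covering/subadditivity argument. By the very definition of $\Delta_{\ta}(t,\varepsilon, B,\rho)$, this set is a union of the balls $B\big(\tfrac{\bp_d+\ta_d}{q},\rho\big)$ over $(q,\bp_d,\bp_m)\in\mathcal{N}_{\ta}(t,\varepsilon, B)$. The number of \emph{distinct} balls occurring here is at most $\#\mathcal{N}_{\ta}(t,\varepsilon, B)$ (distinct triples may share the same rational point $\tfrac{\bp_d+\ta_d}{q}$, but that only makes the count smaller), and, by translation invariance and homogeneity of $\Leb_d$, each of these balls has measure $\Leb_d\big(B(\cdot,\rho)\big)=V_d\rho^d$. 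Hence by countable subadditivity of Lebesgue measure,
$$\Leb_d\big(\Delta_{\ta}(t,\varepsilon, B,\rho)\big)\le \#\mathcal{N}_{\ta}(t,\varepsilon, B)\cdot V_d\,\rho^d\,.$$

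Next I would apply Theorem~\ref{main_div_1}: under the stated hypotheses on $\f$, $\bU$, $B$, $\bx_0$, $t$ and $\varepsilon$ (in particular \eqref{eqn: cond on eps}), with $\rho$ given by \eqref{eqn: define of rho}, it yields $\Leb_d\big(\Delta_{\ta}(t,\varepsilon, B,\rho)\big)\ge \tfrac12\Leb_d(B)$. Combining this with the previous display gives $\tfrac12\Leb_d(B)\le \#\mathcal{N}_{\ta}(t,\varepsilon, B)\cdot V_d\,\rho^d$. Finally, substituting the value $\rho^d=C_0/(\varepsilon^m e^{(d+1)t})$ coming from \eqref{eqn: define of rho} and rearranging produces the claimed inequality $\#\mathcal{N}_{\ta}(t,\varepsilon, B)\ge \dfrac{\varepsilon^m e^{(d+1)t}}{2V_dC_0}\Leb_d(B)$.

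There is essentially no genuine obstacle here beyond bookkeeping: the argument is just a union bound, and overlaps among the balls $B\big(\tfrac{\bp_d+\ta_d}{q},\rho\big)$ are harmless because we only need an \emph{upper} bound on the measure of their union. The one point that requires care is tracking the normalisation of $\rho$ so that $V_d\rho^d$ is exactly $V_dC_0/(\varepsilon^m e^{(d+1)t})$; this is precisely how the constant $C_0$ in Theorem~\ref{main_div_1} enters the final estimate.
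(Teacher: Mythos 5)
Your proof is correct and is precisely the union‑bound argument the paper implicitly invokes when it states the corollary "as a corollary of the above theorem" without further elaboration. One small caveat worth flagging: the exponent in \eqref{eqn: define of rho} appears to carry a sign typo ($-1/d$), and you have rightly read it as $\rho^d = C_0/(\varepsilon^m e^{(d+1)t})$, which is the only interpretation consistent with Lemma~\ref{lemma:BVVZ2.1} and with the claimed conclusion.
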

\subsection{Proof of Theorem~\ref{mainthm: lower bound}} Combining Corollary \ref{coro: lower} and \eqref{connection} the proof follows. 
\subsection{ Proof of Theorem~\ref{main: Haus_div}}
 
The proof of Theorem~\ref{main: Haus_div} goes exactly the same as \cite[pp. 196-199]{Ber12} and using Theorem~\ref{main_div_1}. We should remark that the range of $s$ is determined by the range of $\varepsilon$ in Theorem~\ref{main_div_1}.

\subsection{ Proof of divergence part in Theorem~\ref{thm: main}}\label{proof: div}
    Let us take  $$\psi_1(q):= \max\left\{ \psi(q), q^{-(
    \eta-\zeta)}\right\},$$ where $\eta-\frac{1}{n}>\zeta>0$ and $\eta$ is as in \eqref{def:eta}. Then  $\psi_1(q)^{1/\eta} q\to\infty$ and  $\sum\psi(q)^n=\infty$ implies that $\sum \psi_1(q)^n=\infty.$ Since $\eta-\zeta> \frac{1}{n}$, by \cite[Corollary A]{BVsim}, $\Leb_d\left( \f^{-1}(\mathcal{S}_n^{\ta}(q\to q^{-\eta}))\right)=0.$ Also note that $\f^{-1}(\mathcal{S}_n^{\ta}(\psi_1))= \f^{-1}(\mathcal{S}_n^{\ta}(\psi))\cap \f^{-1}(\mathcal{S}_n^{\ta}(q \to q^{-\eta})).$ Now applying Theorem~\ref{main: Haus_div} for $s=d$, and $\psi_1$ yields the divergence part of Theorem~\ref{thm: main}.

\bigskip

\section{Upper bounds}

\subsection{Preliminaries}\label{section: definition}\label{sec:new major}
For $\bx_0\in\bU, 1>\varepsilon>0,$ $t>0$, let us denote $\Delta_t(\bx_0):= B\left[\bx_0, \left(\frac{\varepsilon}{e^t}\right)^{\frac{1}{2}}\right]\subset\R^d$. 

Also, let
$$g_{\varepsilon,t}:=\phi\diag(\underbrace{\varepsilon^{-1},\dots,\varepsilon^{-1}}_{n}, e^{-t}),$$
\begin{equation}\label{relations} \phi:= \left(\varepsilon^n e^t\right)^{1/n+1},  h:=\frac{dt}{2(n+1)}, \end{equation}
and 
\begin{equation}\label{def: b}
b_t:=\diag\{\underbrace{e^{\frac{dt}{2(n+1)}},\ldots,e^{\frac{dt}{2(n+1)}}}_m, \underbrace{e^{\frac{-(m+1)t}{2(n+1)}},\ldots, e^{\frac{-(m+1)t}{2(n+1)}}}_d, e^{\frac{dt}{2(n+1)}} \}\in \mathrm{SL}_{n+1}(\R).
\end{equation}
Let us define $d_\varepsilon=\diag(\underbrace{1,\dots,1}_m, \underbrace{\varepsilon^{1/2},\dots,\varepsilon^{1/2}}_d, 1)$ for $\varepsilon>0$. 
The following equation follows from matrix multiplication.

\begin{equation}\label{lemma: matrix multiplication} g_{\varepsilon,t}z({\bx})g_{\varepsilon,t}^{-1}= z({\bx}). 
\end{equation}

Now we make  
modified definitions of generic and special parts from \cite{BY}. Let $d_{\varepsilon}, g_{\varepsilon,t}, b_t, u_1(\bx)$ be as above. We define `raw nongeneric' set to be 
$$
\mathfrak{M}^{new}_0(\varepsilon,t):=\{\bx\in \bU~|~\delta_{n+1}(d_{\varepsilon}b_t g_{\varepsilon,t}u_1(\bx)\Z^{n+1})>\phi e^{h}\}.
$$
 
Note the above definition uses a new matrix $d_{\varepsilon}$ that was not used in \cite{BY}; see \cite[Equation 4.18]{BY}.

Then we define the special part to be a thickening of the above raw set, namely, 
\begin{equation}\label{def: minor}\mathfrak{M}^{new}(\varepsilon,t):= \bigcup_{\bx\in \mathfrak{M}^{new}_0(\varepsilon,t)} B(\bx, \varepsilon^{1/2} e^{-t/2})\cap\bU.\end{equation} 
Note that the above thickening radius is also different than the same in \cite[Equation 4.19]{BY}.
Lastly, let us recall that the generic part is the complement of the special part, i.e., 
$$\bU\setminus \mathfrak{M}^{new}(\varepsilon,t).$$

 Similar to \cite[Lemma 1.4]{BY} we have,
\begin{lemma}\label{l1}
If $\f(\bx)\in \mathcal{S}^{\ta}_n(\psi)$ then there are infinitely many $t\in\N$ such that \begin{equation}
    \left\Vert \f(\bx)-\frac{\bp+\ta}{q}\right\Vert<\frac{\psi(e^{t-1})}{e^{t-1}}
\end{equation} for some $(q, \bp)\in \Z^{n+1}$ with $e^{t-1}\leq q<e^t$.
\end{lemma}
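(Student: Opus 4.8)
The plan is to unwind the definition of $\mathcal{S}^{\ta}_n(\psi)$ and extract an exponential scale $t$ from the denominator $q$. Suppose $\f(\bx)\in\mathcal{S}^{\ta}_n(\psi)$. By \eqref{Spsi} there are infinitely many pairs $(\bp,q)\in\Z^n\times\N$ with $\|\f(\bx)-\tfrac{\bp+\ta}{q}\|\le\tfrac{\psi(q)}{q}$; since $\psi(q)\to0$ and $\psi<1$, only finitely many of these can share a given value of $q$ (for fixed $q$ the number of admissible $\bp$ is bounded), so the denominators $q$ occurring are unbounded. First I would, for each such pair with $q$ large, let $t=t(q)\in\N$ be the unique integer with $e^{t-1}\le q<e^t$; as $q\to\infty$ along the sequence, $t\to\infty$, so infinitely many distinct values of $t$ arise.

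The key step is then to replace $\tfrac{\psi(q)}{q}$ by $\tfrac{\psi(e^{t-1})}{e^{t-1}}$ using monotonicity. Since $q\ge e^{t-1}$ and $\psi$ is non-increasing, $\psi(q)\le\psi(e^{t-1})$; combined with $q\ge e^{t-1}$ this gives
\begin{equation*}
\left\Vert \f(\bx)-\frac{\bp+\ta}{q}\right\Vert\le\frac{\psi(q)}{q}\le\frac{\psi(e^{t-1})}{e^{t-1}}.
\end{equation*}
Here one has to be slightly careful to get a strict inequality as stated; if the original inequality is not already strict one can absorb the slack by noting $q<e^t$ forces $q\le e^t-1<e^t$, or simply observe that because $\psi$ is non-increasing and eventually strictly less than the next dyadic-type value the bound can be made strict, or pass to $\psi(e^{t-1})$ which is $>\psi(q)$ whenever $q>e^{t-1}$ strictly (the boundary case $q=e^{t-1}$ being a single value of $q$, hence contributing only finitely many pairs, which can be discarded). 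So for all but finitely many of the chosen pairs the strict inequality holds, and with $e^{t-1}\le q<e^t$ this is exactly the assertion.

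I do not anticipate a serious obstacle here; this is essentially the standard dyadic pigeonholing that converts a Diophantine condition into one indexed by an integer time parameter, mirroring \cite[Lemma~1.4]{BY}. The only mild point of care is ensuring the strictness of the displayed inequality and that genuinely infinitely many distinct integers $t$ are produced (rather than one $t$ repeated), both of which follow from the unboundedness of the denominators $q$ together with the fact that each fixed $q$ accounts for only finitely many solutions.
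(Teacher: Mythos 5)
Your proposal is correct and matches the intended argument; the paper itself only remarks that the lemma follows from the monotonicity of $\psi$, in the same way as Lemma~1.4 of \cite{BY}. One small remark on your strictness discussion: the claim that $\psi(e^{t-1})>\psi(q)$ for $q>e^{t-1}$ need not hold for a merely non-increasing $\psi$; the strictness of the displayed inequality actually comes from the denominator, since for $t\ge 2$ the quantity $e^{t-1}$ is irrational so the integer $q\ge e^{t-1}$ satisfies $q>e^{t-1}$ strictly, giving $\psi(q)/q\le\psi(e^{t-1})/q<\psi(e^{t-1})/e^{t-1}$, while the finitely many pairs with $q=1$ (hence $t=1$) can be discarded exactly as you note.
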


The proof of the above lemma uses the fact that $\psi$ is monotonically decreasing. Our next lemma follows from simple matrix multiplication. 
\begin{lemma}\label{lemma: dani3}
Let $\f, \bU$ be as before. Suppose for some $\bx\in \bU,$ and $\varepsilon>0$, and $t\in\N,$ there exists $(q, \bp)\in\Z\times \Z^{n}$ such that,
\begin{equation}\label{eqn: number th1}
    \begin{aligned}
&\left\vert q \fff_j(\bx)-\sum_{i=1}^d\partial_i\fff_j(\bx)(qx_{i}-p_i)-p_{d+j}\right\vert\ll_{\f, n}\varepsilon,~ 1\leq j\leq m,\\
& \vert q x_{i}-p_i\vert\ll_{\f, n}\varepsilon^{\frac{1}{2}} e^{t/2},~ 1\leq i\leq d,\\
& \vert q\vert \ll_{\f, n} e^t.
    \end{aligned}
\end{equation}
Then \begin{equation}
     g_{\varepsilon,t}u_1(\bx)(-\bp\sigma_n, q)
 \in [c\phi]^m\times [c \phi(\eptm)]^d\times [c\phi].
\end{equation} where  $\phi,\varepsilon,t$ are related as \eqref{relations} and $c$ depends on $\f$ and $n$. 
\end{lemma}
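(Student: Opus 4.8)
The statement to prove is Lemma~\ref{lemma: dani3}, which asserts that if $\bx\in\bU$, $\varepsilon>0$, $t\in\mathbb{N}$, and $(q,\bp)\in\mathbb{Z}\times\mathbb{Z}^n$ satisfy the three systems of inequalities \eqref{eqn: number th1}, then the vector $g_{\varepsilon,t}u_1(\bx)(-\bp\sigma_n, q)$ lies in the box $[c\phi]^m\times[c\phi(\eptm)]^d\times[c\phi]$.

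\textit{Plan of proof.} The lemma is asserted to follow from ``simple matrix multiplication'', so the approach is essentially a direct computation. First I would write out $u_1(\bx)(-\bp\sigma_n,q)^\top$ explicitly using the matrix form \eqref{def_u_1} of $u_1(\bx)$. Note that $\sigma_n$ reverses the order of the coordinates of $\bp$, so $(-\bp\sigma_n,q)^\top$ is the column vector whose entries are $-p_n,\dots,-p_1,q$ (up to the bookkeeping of how $\bp=(p_1,\dots,p_n)$ is indexed against the $m+d$ block structure, with the first $m$ entries $p_{d+1},\dots,p_{d+m}$ coming from the $f$-coordinates and the next $d$ entries $p_1,\dots,p_d$ coming from the linear coordinates). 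Carrying out the product row by row against \eqref{def_u_1}: the first $m$ rows produce entries of the form $-p_{d+j}-\sum_{i=1}^d(-p_i)\partial_i f_j(\bx) + q\big(f_j(\bx)-\sum_{i=1}^d x_i\partial_i f_j(\bx)\big)$, which regroups exactly as $q f_j(\bx)-\sum_{i=1}^d\partial_i f_j(\bx)(qx_i-p_i)-p_{d+j}$; the next $d$ rows produce $qx_i-p_i$; and the last row produces $q$. Thus $u_1(\bx)(-\bp\sigma_n,q)^\top$ has entries precisely the quantities appearing on the left-hand sides of the three lines of \eqref{eqn: number th1} (in the order: $m$ ``$f$-type'' entries, $d$ ``linear-type'' entries, one ``$q$'' entry).

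Next I would apply $g_{\varepsilon,t}=\phi\,\diag(\varepsilon^{-1},\dots,\varepsilon^{-1},e^{-t})$ to this vector. The first $n=m+d$ coordinates get multiplied by $\phi\varepsilon^{-1}$ and the last by $\phi e^{-t}$. Hence: the $m$ $f$-type entries, which by the first line of \eqref{eqn: number th1} are $\ll_{\f,n}\varepsilon$ in absolute value, become $\ll_{\f,n}\phi$; the $d$ linear-type entries, which by the second line are $\ll_{\f,n}\varepsilon^{1/2}e^{t/2}$, become $\ll_{\f,n}\phi\varepsilon^{-1}\varepsilon^{1/2}e^{t/2}=\phi\varepsilon^{-1/2}e^{t/2}=\phi(\et)^{-1/2}=\phi\,\eptm$; and the last entry, which by the third line is $\ll_{\f,n}e^t$, becomes $\ll_{\f,n}\phi e^{-t}e^t=\phi$. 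Collecting these three bounds and choosing $c$ to be the maximum of the (finitely many) implied constants — all depending only on $\f$ and $n$ — gives the claimed membership in $[c\phi]^m\times[c\phi(\eptm)]^d\times[c\phi]$, using that each $[\,\cdot\,]^k$ denotes a sup-norm box of the stated radius. One should double-check the consistency with \eqref{relations}, namely $\phi=(\varepsilon^n e^t)^{1/(n+1)}$, though $\phi$ enters only as an overall scalar and its explicit value plays no role in the estimate.

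\textit{Main obstacle.} There is no serious analytic obstacle; the only thing that requires care is the combinatorial bookkeeping of the permutation $\sigma_n$ and the block decomposition $n=m+d$ — matching which coordinate of $\bp$ pairs with which row of $u_1(\bx)$ in \eqref{def_u_1}, and confirming that the sign conventions ($-\bp\sigma_n$ versus the entries $-\partial_i f_j$ in \eqref{def_u_1}) conspire so that the cross terms combine into $qx_i-p_i$ with the correct sign rather than $p_i-qx_i$. Once the row-by-row expansion is written down carefully and seen to reproduce exactly the left-hand sides of \eqref{eqn: number th1}, the rest is the one-line scaling argument above.
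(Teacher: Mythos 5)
Your proof is correct and is exactly the computation the paper is alluding to when it says the lemma is "obvious from the definition of $u_1(\bx)$ in \eqref{def_u_1}": the paper gives no further detail, and your row-by-row expansion of $u_1(\bx)(-\bp\sigma_n,q)^\top$, matching the three blocks of entries against the three lines of \eqref{eqn: number th1} and then scaling by $g_{\varepsilon,t}=\phi\,\diag(\varepsilon^{-1},\dots,\varepsilon^{-1},e^{-t})$ to get the bounds $\phi$, $\phi\eptm$, $\phi$, is precisely what is needed. The bookkeeping of the permutation $\sigma_n$ and the block indexing is handled correctly.
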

The proof is obvious from the definition of $u_1(\bx)$ in \eqref{def_u_1}. We assume $\f,\bU, n, m, d, \ta$ be as in Theorem~\ref{thm: main}. By \eqref{Monge} and \eqref{condi2} the following lemma follows easily.

\begin{lemma}\label{lemma: Dani}
Suppose for some $\bx\in\bU,$ $t>0,\varepsilon>0,$ \begin{equation}\label{eqn: inhomo}\Vert q\f(\bx)-\bp+\ta\Vert<\varepsilon,\end{equation} for some $(q,\bp)\in\Z^{n+1}$, with $0<q\leq e^t$. 
Then,
\begin{equation}\label{eqn: Dani1}
    \begin{aligned}
&\left\vert q \fff_j(\bx)-\sum_{i=1}^d\partial_i\fff_j(\bx)(q x_{i}-p_i+\theta_i)-p_{d+j} +\theta_{d+j}\right\vert\ll_{\f,n}\varepsilon,~1\leq j\leq m,\\
& \vert q x_{i}-p_i+\theta_i\vert\leq\varepsilon,~1\leq i\leq d,\\
& \vert q\vert \leq e^t.
    \end{aligned}
\end{equation}

\end{lemma}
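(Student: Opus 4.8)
The plan is to prove Lemma~\ref{lemma: Dani} by a direct computation, treating the inhomogeneous shift $\ta$ as an error term absorbed into the right-hand sides. First I would unpack the hypothesis \eqref{eqn: inhomo}: the sup-norm bound $\Vert q\f(\bx)-\bp+\ta\Vert<\varepsilon$ means that each coordinate satisfies $|q f_i(\bx)-p_i+\theta_i|<\varepsilon$. Using the Monge form \eqref{Monge}, the first $d$ coordinates of $\f(\bx)$ are just $x_1,\dots,x_d$, which immediately gives $|q x_i-p_i+\theta_i|<\varepsilon$ for $1\le i\le d$ — this is the second line of \eqref{eqn: Dani1}. The third line, $|q|\le e^t$, is just the standing assumption $0<q\le e^t$ restated.

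The substance is the first line. For $1\le j\le m$, the $(d+j)$-th coordinate bound gives $|q f_j(\bx)-p_{d+j}+\theta_{d+j}|<\varepsilon$. To reach the stated form I would add and subtract $\sum_{i=1}^d \partial_i f_j(\bx)(q x_i - p_i+\theta_i)$:
\begin{equation*}
q f_j(\bx)-\sum_{i=1}^d\partial_i f_j(\bx)(q x_{i}-p_i+\theta_i)-p_{d+j}+\theta_{d+j}
=\big(q f_j(\bx)-p_{d+j}+\theta_{d+j}\big)-\sum_{i=1}^d\partial_i f_j(\bx)(q x_{i}-p_i+\theta_i).
\end{equation*}
The first bracket is bounded by $\varepsilon$ by the coordinate bound above. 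For each term in the sum, $|\partial_i f_j(\bx)|\le M$ by \eqref{condi2}, while $|q x_i-p_i+\theta_i|<\varepsilon$ from the $i$-th coordinate bound already established. Hence the sum is bounded by $dM\varepsilon$, and the triangle inequality yields
$$\Big|q f_j(\bx)-\sum_{i=1}^d\partial_i f_j(\bx)(q x_{i}-p_i+\theta_i)-p_{d+j}+\theta_{d+j}\Big|\le (1+dM)\varepsilon\ll_{\f,n}\varepsilon,$$
which is exactly the first line of \eqref{eqn: Dani1}.

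There is essentially no obstacle here — the lemma is a bookkeeping step that rewrites a sup-norm inequality in a form adapted to the triangular matrix $u_1(\bx)$ of \eqref{def_u_1}, and the only inputs are the Monge normalisation \eqref{Monge} and the derivative bound \eqref{condi2}. The mildly delicate point, if any, is simply to be careful that the implied constant in $\ll_{\f,n}$ genuinely depends only on $\f$ (through $M$) and on $n$ (through $d<n$), which the computation above makes transparent. I would therefore present the proof in three short displays corresponding to the three lines of \eqref{eqn: Dani1}, with the add-and-subtract trick being the only non-trivial manipulation.
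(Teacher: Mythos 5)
Your proof is correct and is precisely the ``easy'' argument the paper has in mind: the paper gives no proof beyond the remark that the lemma follows from the Monge form \eqref{Monge} and the derivative bound \eqref{condi2}, and your three displays (coordinate-wise unpacking, restating $q\le e^t$, and the add-and-subtract step bounded by $(1+dM)\varepsilon$) are exactly what that remark is pointing at.
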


\begin{lemma}\label{lemma:useful}
Let $1\geq \varepsilon>0, t>0$. Suppose $\bx\in \Delta_t(\bx_0):= B\left[\bx_0, \left(\frac{\varepsilon}{e^t}\right)^{\frac{1}{2}}\right]\subset\R^d$, and $(\bp,q)\in\Z^{n+1}$, with $1\leq q\leq e^t$ such that $\Vert \f(\bx)-\frac{\bp+\ta}{q}\Vert<\frac{\varepsilon}{e^t}.$ Then 
\begin{equation}
    \begin{aligned}
&\left\vert q \fff_j(\bx_0)-\sum_{i=1}^d\partial_i\fff_j(\bx_0)(q x_{0,i}-p_i+\theta_i)-p_{d+j} +\theta_{d+j}\right\vert\ll_{\f, n}\varepsilon, ~1\leq j\leq m,\\
& \vert q x_{0,i}-p_i+\theta_i\vert\leq 2\varepsilon^{1/2} e^{t/2},~ 1\leq i\leq d,\\
& \vert q\vert \leq e^t.
    \end{aligned}
\end{equation}

\end{lemma}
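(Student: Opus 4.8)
The plan is to derive Lemma~\ref{lemma:useful} from Lemma~\ref{lemma: Dani} applied at the point $\bx$, and then to carry the resulting inequalities from $\bx$ back to the fixed centre $\bx_0$ by Taylor expansion, using \eqref{condi2} together with the fact that the radius $\ept$ of $\Delta_t(\bx_0)$ is calibrated precisely so that every transfer error is $O(\varepsilon)$. First I would observe that, since $1\le q\le e^t$, multiplying $\|\f(\bx)-\frac{\bp+\ta}{q}\|<\varepsilon/e^t$ by $q$ gives $\|q\f(\bx)-\bp-\ta\|<\varepsilon$, so that the hypothesis of Lemma~\ref{lemma: Dani} holds at $\bx$ (with $-\ta$ in place of $\ta$; the sign in front of each $\theta_\ell$ is immaterial for what follows and will not be tracked). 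Recalling $\f=(\bx,f(\bx))$, Lemma~\ref{lemma: Dani} then yields $|q|\le e^t$, $|qx_i-p_i+\theta_i|\le\varepsilon$ for $1\le i\le d$, and
\begin{equation*}
\left| qf_j(\bx)-\sum_{i=1}^d\partial_if_j(\bx)(qx_i-p_i+\theta_i)-p_{d+j}+\theta_{d+j}\right|\ll_{\f,n}\varepsilon\qquad(1\le j\le m).
\end{equation*}

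Next I would pass from $\bx$ to $\bx_0$, using $\|\bx-\bx_0\|\le\ept$. By Taylor's formula and \eqref{condi2}, $|f_j(\bx)-f_j(\bx_0)-\sum_{i=1}^d\partial_if_j(\bx_0)(x_i-x_{0,i})|\le\tfrac12 Md^2\|\bx-\bx_0\|^2\le\tfrac12 Md^2\varepsilon e^{-t}$, and by the mean value theorem $|\partial_if_j(\bx)-\partial_if_j(\bx_0)|\le Md\,\ept$. For the second assertion of the lemma, $q|x_i-x_{0,i}|\le e^t\ept=\varepsilon^{1/2}e^{t/2}$, so $|qx_{0,i}-p_i+\theta_i|\le|qx_i-p_i+\theta_i|+q|x_i-x_{0,i}|\le\varepsilon+\varepsilon^{1/2}e^{t/2}\le 2\varepsilon^{1/2}e^{t/2}$, using $\varepsilon\le\varepsilon^{1/2}\le\varepsilon^{1/2}e^{t/2}$. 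For the first assertion, adding and subtracting $qf_j(\bx)$ and the corresponding linear terms expresses $qf_j(\bx_0)-\sum_{i=1}^d\partial_if_j(\bx_0)(qx_{0,i}-p_i+\theta_i)-p_{d+j}+\theta_{d+j}$ as a sum of three pieces: the analogous quantity at $\bx$, shown above to be $\ll_{\f,n}\varepsilon$; the term $-q\big(f_j(\bx)-f_j(\bx_0)-\sum_{i=1}^d\partial_if_j(\bx_0)(x_i-x_{0,i})\big)$, which is $\le\tfrac12 Md^2\varepsilon$ since $q\le e^t$; and $\sum_{i=1}^d(\partial_if_j(\bx)-\partial_if_j(\bx_0))(qx_i-p_i+\theta_i)$, each of whose $d$ summands is $\le Md\,\ept\cdot\varepsilon\le Md\varepsilon$ because $\ept\le1$. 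Summing these gives $\ll_{\f,n}\varepsilon$, and $|q|\le e^t$ is the hypothesis; this completes the proof.

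I do not expect a genuine obstacle here, as this is purely a bookkeeping lemma. The one thing to monitor is that every transfer error is really $O(\varepsilon)$, and this is forced by the shape of $\Delta_t(\bx_0)$: its radius $\ept$ makes the second-order Taylor remainder times $q$ of size $\asymp\|\bx-\bx_0\|^2e^t\asymp\varepsilon$, while the variation of the first derivatives times the size-$\varepsilon$ linear terms has size $\asymp\|\bx-\bx_0\|\cdot\varepsilon\le\varepsilon$. One should also ensure at the outset that $\bx$ and the segment joining $\bx_0$ to $\bx$ lie in $\bU$, so that $\f(\bx)$ and the Taylor and mean value estimates make sense; this holds after shrinking the neighbourhood of $\bx_0$, since $\ept\le1$ and, in the applications of the lemma, $t$ is large and hence $\ept$ correspondingly small.
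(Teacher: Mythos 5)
Your proof is correct and follows essentially the same route as the paper's: apply Lemma~\ref{lemma: Dani} at $\bx$ to obtain \eqref{eqn: Dani1}, then pass from $\bx$ to $\bx_0$ using Taylor expansion of $f_j$ and of $\partial_i f_j$ together with \eqref{condi2}, exploiting the calibrated radius $\ept$ of $\Delta_t(\bx_0)$ so that both transfer errors are $O(\varepsilon)$. Your side remark about the sign of $\ta$ is apt — the hypothesis \eqref{eqn: inhomo} of Lemma~\ref{lemma: Dani} is stated with a $+\ta$ while the present hypothesis forces a $-\ta$, but as you note this is immaterial (one can replace $\ta$ by $-\ta$ throughout), and the paper likewise glosses over it.
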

\begin{proof}
    Using the hypothesis of this lemma, by Lemma \ref{lemma: Dani}, \eqref{eqn: Dani1} holds.   One can write $\bx=\bx_0+\left(\varepsilon e^{-t}\right)^{1/2}\bx'$, where $\Vert \bx'\Vert\leq 1.$ For $1\leq i\leq d,$
    $$\begin{aligned}&\vert q x_{0,i}-p_i+\theta_i\vert\\ &<\vert q x_{i}-p_i+\theta_i\vert+ \vert q\vert \vert x_i-x_{0,i}\vert\\
    &\leq \varepsilon+ e^t \ept\leq 2\varepsilon^{1/2} e^{t/2}.\end{aligned}$$

    Also, by \eqref{eqn: Dani1}, for $1\leq j\leq m,$
$$\begin{aligned}
&\left\vert q \fff_j(\bx_0)-\sum_{i=1}^d\partial_i\fff_j(\bx_0)(q x_{0,i}-p_i+\theta_i)-p_{d+j} +\theta_{d+j}\right\vert\\
& \ll_{\f,n}\varepsilon+\sum_{i=1}^d\vert \partial_i\fff_j(\bx_0)-\partial_i\fff_j(\bx) \vert \vert qx_i-p_i+\theta_i\vert +\vert q\vert \left(\varepsilon e^{-t} \right)\\
& \ll_{\f,n} \varepsilon+\ept \varepsilon+ \varepsilon\\
& \ll_{\f,n}\varepsilon.
\end{aligned}
$$

To conclude some of the inequalities above, we use Taylor's expansion, $\fff_j(\bx_0)=\fff_j(\bx)+ \sum_{i=1}^d \partial_i\fff_j(\bx)(x_{0,i}-x_i)+\sum_{\beta\in\N^2,\vert \beta\vert=2}\hat\fff_{j,\beta}(\bx)(\bx_{0}-\bx)_\beta,$ where for $\by\in \R^d,$ $\by_\beta=y_k y_l$ if $\beta=(k,l)\in \N^2, \vert \beta\vert:=k+l=2.$ Similarly, we also use that $
\partial_{i}\fff_j(\bx_0)=\partial_i\fff_j(\bx)+ \sum_{k=1}^d \hat\fff_{i,j,k}(\bx)(x_{0,k}-x_k)$ and for every $\bx\in\bU$, \begin{equation}\label{taylorconsequence}
\vert \hat\fff_{i,j,k}(\bx)\vert \leq  \max_{\by\in\bU} \vert \partial_{k,i}^2\fff_j(\by)\vert \text{ and }
\vert \hat\fff_{i,\beta}(\bx)\vert \leq \max_{\by\in\bU}\vert \partial_{\beta}\fff_i(\by)\vert.\end{equation}
\end{proof}

\subsection{Generic part}
Let us recall the definition of $\mathfrak{M}^{new}(\varepsilon,t)$ from \S \ref{sec:new major}, and we continue with the same notations as in the previous sections.
\begin{lemma}\label{lemma: inhomo count}
Let $B$ be a ball in $\bU.$ For all large enough $t>0,$ $0<\varepsilon<1$ for every $\bx_0\in B\cap (\bU\setminus \mathfrak{M}^{new}(\varepsilon,t)),$ we have 
$$N_{\ta}(\Delta_t(\bx_0)\cap B;\varepsilon, t)\ll_{n,\f} \varepsilon^n e^t (\varepsilon e^{-t})^{\frac{-d}{2}}.$$

\end{lemma}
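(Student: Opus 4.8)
The plan is to convert the counting problem for $N_{\ta}(\Delta_t(\bx_0)\cap B;\varepsilon,t)$ into a lattice‑point counting problem in $\R^{n+1}$ via the matrices $g_{\varepsilon,t}$, $b_t$, $d_\varepsilon$ and $u_1(\bx)$, and then use the lower bound on the successive minima that defines the generic set $\bU\setminus\mathfrak{M}^{new}(\varepsilon,t)$ to control how many lattice points can fall into the relevant box. First I would take a pair $(\bp,q)\in\Z^{n+1}$ with $1\le q\le e^t$ counted by $N_{\ta}$, i.e. some $\bx\in\Delta_t(\bx_0)\cap B$ with $\|\f(\bx)-\tfrac{\bp+\ta}{q}\|<\varepsilon e^{-t}$. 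By Lemma~\ref{lemma:useful}, this forces the inhomogeneous inequalities \eqref{eqn: Dani1}-type estimates at the \emph{center} $\bx_0$: $|qx_{0,i}-p_i+\theta_i|\ll\varepsilon^{1/2}e^{t/2}$, the $m$ ``curved'' inequalities with error $\ll_{\f,n}\varepsilon$, and $q\le e^t$. Then, exactly as in Lemma~\ref{lemma: dani3}, applying $g_{\varepsilon,t}u_1(\bx_0)$ to the integer vector $\widetilde{\bp}:=(-\bp\sigma_n,q)$ (or its inhomogeneous shift) lands it in a box of the shape $[c\phi]^m\times[c\phi(\varepsilon e^{-t})^{-1/2}]^d\times[c\phi]$.

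Next I would apply the additional diagonal conjugations $b_t$ and $d_\varepsilon$. Since $b_t$ is the diagonal matrix of \eqref{def: b} and $d_\varepsilon=\diag(1,\dots,1,\varepsilon^{1/2},\dots,\varepsilon^{1/2},1)$, multiplying the box above by $d_\varepsilon b_t$ turns it into a box all of whose side lengths are comparable to $\phi e^{h}$ (this is precisely the arithmetic behind the choices $h=\tfrac{dt}{2(n+1)}$ and the shape of $b_t$, $d_\varepsilon$: the $m$ top coordinates get stretched by $e^{h}$, the $d$ middle coordinates $\phi(\varepsilon e^{-t})^{-1/2}$ get multiplied by $\varepsilon^{1/2}e^{-(m+1)t/(2(n+1))}$, which again gives $\ll\phi e^{h}$, and the last coordinate $\phi$ gets multiplied by $e^{h}$). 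So every integer vector counted by $N_{\ta}$ produces a point of the lattice $\Lambda_{\bx_0}:=d_\varepsilon b_t g_{\varepsilon,t}u_1(\bx_0)\Z^{n+1}$ (up to the bounded inhomogeneous shift coming from $\ta$) lying in the ball $B[\,\ast,\,C\phi e^{h}\,]$ for a constant $C=C(n,\f)$. Now the generic condition $\bx_0\in\bU\setminus\mathfrak{M}^{new}(\varepsilon,t)$ means $\bx_0\notin\mathfrak{M}^{new}_0(\varepsilon,t)$ — here I need the slight subtlety that the thickening radius $\varepsilon^{1/2}e^{-t/2}$ of $\mathfrak{M}^{new}$ matches the radius of $\Delta_t(\bx_0)$, so $\bx_0$ itself is genuinely outside $\mathfrak{M}^{new}_0$ — hence $\delta_{n+1}(\Lambda_{\bx_0})\le\phi e^{h}$.

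With $\delta_{n+1}(\Lambda_{\bx_0})\le\phi e^{h}$ and $\det\Lambda_{\bx_0}=1$, a standard geometry‑of‑numbers counting bound (the number of lattice points of a unimodular lattice in a ball of radius $R$ is $\ll_n \prod_{j}\max\{1,R/\delta_j\}$, and here $R\asymp\phi e^{h}\asymp\delta_{n+1}$ while the other minima are only smaller, so all factors are $\asymp 1$ except possibly a bounded number of them controlled by $R/\delta_1$) gives that the number of such lattice points, and hence $N_{\ta}(\Delta_t(\bx_0)\cap B;\varepsilon,t)$, is $\ll_{n,\f}(\phi e^{h})^{n+1}/\det\Lambda_{\bx_0}=(\phi e^{h})^{n+1}$. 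Finally I would unwind the definitions: $\phi^{n+1}=\varepsilon^n e^t$ by \eqref{relations}, and $(e^{h})^{n+1}=e^{(n+1)h}=e^{dt/2}=(\varepsilon e^{-t})^{-d/2}\varepsilon^{d/2}$; multiplying, $(\phi e^{h})^{n+1}=\varepsilon^n e^t\cdot e^{dt/2}=\varepsilon^n e^t(\varepsilon e^{-t})^{-d/2}\varepsilon^{d/2}$, which is $\ll \varepsilon^n e^t(\varepsilon e^{-t})^{-d/2}$ since $\varepsilon<1$. This is exactly the claimed bound.

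The main obstacle I anticipate is \emph{not} the geometry‑of‑numbers counting step, which is routine once the lattice and the box are in hand, but rather the bookkeeping that shows the box $d_\varepsilon b_t(\text{box from Lemma~\ref{lemma: dani3}})$ really is (comparable to) a single ball of radius $\phi e^{h}$ — in particular verifying that the middle $d$ coordinates, which carry the awkward factor $(\varepsilon e^{-t})^{-1/2}$, are tamed exactly by the combination of the $\varepsilon^{1/2}$ in $d_\varepsilon$ and the negative exponent $e^{-(m+1)t/(2(n+1))}$ in $b_t$, and that the inhomogeneous shift by $\ta$ (absorbed through $u_1(\bx_0)$ as in Lemmas~\ref{lemma: Dani}–\ref{lemma:useful}) only moves the box by a bounded amount and therefore does not affect the count beyond the implied constant. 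A secondary point requiring care is ensuring that the lower bound $\delta_{n+1}(\Lambda_{\bx_0})\le \phi e^h$ is the right inequality to use: we need an \emph{upper} bound on the largest successive minimum to bound the point count in a ball of radius $\asymp\phi e^h$, and this is precisely what ``$\bx_0\notin\mathfrak{M}^{new}_0(\varepsilon,t)$'' delivers.
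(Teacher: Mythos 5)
Your strategy is essentially the same as the paper's: reduce the inequalities to the center $\bx_0$ via Lemma~\ref{lemma:useful}, push through the dilation $d_\varepsilon b_t g_{\varepsilon,t}u_1(\bx_0)$ so that the relevant vectors land in a box of side $\asymp\phi e^{h}$, and then use $\delta_{n+1}\le\phi e^{h}$ (which holds because $\bx_0\notin\mathfrak{M}^{new}(\varepsilon,t)\supset\mathfrak{M}^{new}_0(\varepsilon,t)$) to control the count. The one structural difference is that the paper subtracts two solutions $(\bp,q)$ and $(\bp',q')$ to produce genuine lattice points of $\Lambda_{\bx_0}:=d_\varepsilon b_t g_{\varepsilon,t}u_1(\bx_0)\Z^{n+1}$, thereby eliminating the inhomogeneous shift $\ta$, whereas you propose to count points of the coset of $\Lambda_{\bx_0}$ directly; these are equivalent up to a constant, so that variation is fine.

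However, there is a concrete error in the final step: you assert $\det\Lambda_{\bx_0}=1$ and conclude the count is $\ll(\phi e^{h})^{n+1}$. But while $b_t$, $g_{\varepsilon,t}$ and $u_1(\bx_0)$ are all unimodular, $d_\varepsilon=\diag(1,\dots,1,\varepsilon^{1/2},\dots,\varepsilon^{1/2},1)$ has determinant $\varepsilon^{d/2}$, so $\det\Lambda_{\bx_0}=\varepsilon^{d/2}$. The geometry-of-numbers bound then gives $\ll(\phi e^{h})^{n+1}/\det\Lambda_{\bx_0}=(\phi e^{h})^{n+1}\varepsilon^{-d/2}=\varepsilon^n e^t(\varepsilon e^{-t})^{-d/2}$, which is \emph{exactly} the stated bound, with no slack left. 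Your bound $(\phi e^{h})^{n+1}$ is smaller by a factor of $\varepsilon^{d/2}$ than what the argument actually delivers, and is not justified by the lattice-point count; the fact that you then throw away $\varepsilon^{d/2}<1$ to recover the stated inequality masks the error rather than repairing it. Relatedly, your parenthetical reasoning that ``all factors are $\asymp 1$'' in $\prod_j\max\{1,R/\delta_j\}$ is off: the correct statement is simply that since $R\gg\delta_{n+1}\ge\dots\ge\delta_1$, the product is $R^{n+1}/\prod_j\delta_j\asymp R^{n+1}/\det\Lambda_{\bx_0}$ by Minkowski's second theorem, and it is precisely this $\det$ factor that supplies the missing $\varepsilon^{-d/2}$.
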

\begin{proof} 
Without loss of generality we assume $N_{\ta}(\Delta_t(\bx_0)\cap B;\varepsilon, t)>1.$
Then take two distinct $(\bp,q)$ and $(\bp',q')$ in  $\Z^{n}\times \N$ such that $1\leq q,q'\leq e^{t}$
and  for some $\bx,\bx'\in \Delta_t(\bx_0)\cap B$, 
$$ \left\Vert \f(x)-\frac{\bp+\ta}{q}\right\Vert\leq \frac{\varepsilon}{e^t}, \text{ and } \left\Vert \f(x')-\frac{\bp'+\ta}{q'}\right\Vert\leq \frac{\varepsilon}{e^t}.$$
By Lemma \ref{lemma:useful}, we have 
\begin{equation}\label{eqn: inhomo q}
    \begin{aligned}
&\left\vert q \fff_j(\bx_0)-\sum_{i=1}^d\partial_i\fff_j(\bx_0)(q x_{0,i}-p_i+\theta_i)-p_{d+j}+\theta_{d+j}\right\vert\ll_{\f,n}\varepsilon, ~1\leq j\leq m,\\
& \vert q x_{0,i}-p_i+\theta_i\vert\leq 2\varepsilon^{1/2} e^{t/2},~ 1\leq i\leq d,\\
& \vert q\vert \leq e^t,
    \end{aligned}
\end{equation}
and 
\begin{equation}\label{eqn: inhomo q'}
    \begin{aligned}
&\left\vert q' \fff_j(\bx_0)-\sum_{i=1}^d\partial_i\fff_j(\bx_0)(q' x_{0,i}-p'_i+\theta_i)-p'_{d+j}+\theta_{d+j}\right\vert\ll_{\f,n}\varepsilon,~ 1\leq j\leq m,\\
& \vert q' x_{0,i}-p'_i+\theta_i\vert\leq 2\varepsilon^{1/2} e^{t/2},~1\leq i\leq d,\\
& \vert q'\vert \leq e^t.
    \end{aligned}
\end{equation}
Now by subtracting the previous two equations, we get 
\begin{equation}\label{eqn: inhomo q''}
    \begin{aligned}
&\left\vert q'' \fff_j(\bx_0)-\sum_{i=1}^d\partial_i\fff_j(\bx_0)(q'' x_{0,i}-p^{''}_i)-p''_{d+j}\right\vert\ll_{\f, n}\varepsilon,~ 1\leq j\leq m,\\
& \vert q'' x_{0,i}-p''_i\vert\ll 4\varepsilon^{1/2} e^{t/2},~1\leq i\leq d,\\
& \vert q''\vert \leq 2 e^t,
    \end{aligned}
\end{equation}
where $q''= q-q'$ and $\bp''=\bp-\bp'.$
 
 Now by Lemma \ref{lemma: dani3} we have, 
 \begin{equation}\label{eqn: homofinish1}
 g_{\varepsilon,t}u_1(\bx_0)(-\bp''\sigma_n, q'')
 \in [c\phi]^m\times [c \phi(\eptm)]^d\times [c\phi],
 \end{equation} where $c$ depends on $\f$ and $n.$
 From \eqref{eqn: homofinish1}, and by the definition of $b_t$ as in \S \ref{section: definition}, we have 
 $$b_t g_{\varepsilon,t} u_1(\bx_0)(-\bp''\sigma_n,q'')\in \underbrace{[c_1\phi e^h]^m\times [c_1\phi e^h \varepsilon^{-1/2}]^d\times [c_1 \phi e^h]}_{\Omega} \cap~ b_t g_{\varepsilon,t} u_1(\bx_0)\Z^{n+1} ,$$ where $c_1>0$ depends on $n,\f.$ 
 
 Now applying $d_\varepsilon$ we get 
$$
d_{\varepsilon}b_t g_{\varepsilon, t}u_1(\bx_0)(-\bp'' \sigma_n, q'')\in \underbrace{[c_1\phi e^h]^m \times [c_1 \phi  e^h]^d\times [c_1 \phi e^h]}_{\Omega^\star} \cap~ d_{\varepsilon} b_t g_{\varepsilon,t} u_1(\bx_0)\Z^{n+1}.
$$ Since $\bx_0\in \bU\setminus \mathfrak{M}^{new}(\varepsilon, t)$, we have 
$$
\delta_{n+1}(d_{\varepsilon}b_t g_{\varepsilon, t}u_1(\bx_0)\Z^{n+1})\leq \phi e^h.
$$
Hence $c_2\Omega^\star$ for some large $c_2>0$ will contain the fundamental domain of the lattice $d_\varepsilon b_t g_{\varepsilon,t} u_1(\bx_0)\Z^{n+1}$. 
Therefore, 
$$\# c_2 \Omega^\star \cap d_{\varepsilon} b_t g_{\varepsilon, t}u_1(\bx_0)\Z^{n+1}\ll_{n,\f} \frac{\Leb_{n+1}( \Omega^\star)}{\det d_{\varepsilon}}= \frac{\phi^{n+1}  e^{h(n+1)}}{\varepsilon^{d/2}}= \phi^{n+1} \varepsilon^{-d/2} e^{h(n+1)}= \varepsilon^n e^t (\varepsilon e^{-t})^{-d/2}.$$
Thus the number of such $(\bp'',q'')$ is $\ll \varepsilon^n e^t (\varepsilon e^{-t})^{-d/2}$, and note that we should get at least $ N_{\ta}(\Delta_t(\bx_0)\cap B;\varepsilon,t)-1$ many 
(distinct) of them, which therefore concludes the lemma.
\end{proof}


The proof of the following lemma is exactly the same as in \cite[Lemma 5.4]{BY}. 

\begin{lemma}\label{lemma: covering count}
For sufficiently large $t>0,$
$$N_{\ta}(B\setminus \mathfrak{M}^{new}(\varepsilon,t); \varepsilon,t)\leq 2 {(\varepsilon e^{-t})}^{-d/2}\Leb_d(B) \max_{\bx_0\in B\cap (\bU\setminus \mathfrak{M}^{new}(\varepsilon,t))}N_{\ta}(\Delta_t(\bx_0);\varepsilon, t).
$$ 

\end{lemma}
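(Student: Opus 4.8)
The statement to prove is Lemma~\ref{lemma: covering count}: for sufficiently large $t$,
$$
N_{\ta}(B\setminus \mathfrak{M}^{new}(\varepsilon,t); \varepsilon,t)\leq 2 (\varepsilon e^{-t})^{-d/2}\Leb_d(B) \max_{\bx_0\in B\cap (\bU\setminus \mathfrak{M}^{new}(\varepsilon,t))}N_{\ta}(\Delta_t(\bx_0);\varepsilon, t).
$$
The idea is a straightforward covering argument at scale $\ell:=(\varepsilon e^{-t})^{1/2}$, which is exactly the radius appearing in the definition of $\Delta_t(\bx_0)=B[\bx_0,\ell]$. Every rational point $(\bp,q)$ counted by $N_{\ta}(B\setminus\mathfrak{M}^{new}(\varepsilon,t);\varepsilon,t)$ comes with a witness $\bx\in (B\setminus\mathfrak{M}^{new}(\varepsilon,t))$ with $\|\f(\bx)-\tfrac{\bp+\ta}{q}\|\le\varepsilon e^{-t}$ and $1\le q\le e^t$. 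First I would fix a finite collection of points $\bx_0^{(1)},\dots,\bx_0^{(M)}\in B$ such that the balls $B[\bx_0^{(j)},\ell]$ cover $B$, and such that each witness $\bx$ falls in some $\Delta_t(\bx_0^{(j)})$ with $\bx_0^{(j)}$ lying in $B\cap(\bU\setminus\mathfrak{M}^{new}(\varepsilon,t))$ — this last point is where the \emph{thickening} in the definition \eqref{def: minor} of $\mathfrak{M}^{new}(\varepsilon,t)$ is used: since $\bx\notin\mathfrak{M}^{new}(\varepsilon,t)$, and the thickening radius there is $\varepsilon^{1/2}e^{-t/2}=\ell$, any point within distance $\ell$ of $\bx$ that lies in the raw set $\mathfrak{M}^{new}_0(\varepsilon,t)$ would force $\bx\in\mathfrak{M}^{new}(\varepsilon,t)$, a contradiction; hence the centre $\bx_0^{(j)}$ may be taken outside the raw set, and in particular outside $\mathfrak{M}^{new}(\varepsilon,t)$, so that Lemma~\ref{lemma: inhomo count} applies to it.

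**Carrying it out.** Concretely, I would take a maximal $\ell$-separated subset $\{\bx_0^{(j)}\}_{j=1}^M$ of $B\cap(\bU\setminus\mathfrak{M}^{new}(\varepsilon,t))$; by maximality the balls $B[\bx_0^{(j)},\ell]$ cover $B\cap(\bU\setminus\mathfrak{M}^{new}(\varepsilon,t))$, hence cover all witnesses $\bx$. A standard volume/packing bound gives
$$
M\le \frac{\Leb_d\big(B+B[\mathbf{0},\ell]\big)}{\Leb_d(B[\mathbf{0},\ell/2])}\le 2(\varepsilon e^{-t})^{-d/2}\Leb_d(B)
$$
for $t$ large enough (so that $\ell$ is small relative to the radius of $B$ and the inflation of $B$ by $\ell$ costs at most a factor $2$; the power of $\ell$ comes from $\Leb_d(B[\mathbf{0},\ell/2])\asymp_d \ell^d$, and the constant is absorbed into the ``$2$'' after taking $t$ large). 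Finally, every $(\bp,q)$ counted on the left is counted by $N_{\ta}(\Delta_t(\bx_0^{(j)});\varepsilon,t)$ for at least one $j$, so
$$
N_{\ta}(B\setminus\mathfrak{M}^{new}(\varepsilon,t);\varepsilon,t)\le \sum_{j=1}^M N_{\ta}(\Delta_t(\bx_0^{(j)});\varepsilon,t)\le M\cdot \max_{\bx_0\in B\cap(\bU\setminus\mathfrak{M}^{new}(\varepsilon,t))}N_{\ta}(\Delta_t(\bx_0);\varepsilon,t),
$$
and combining with the bound on $M$ gives the claim.

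**The main obstacle.** None of the steps is deep; the one point that needs genuine care — and the reason the statement is nontrivial and its proof ``exactly the same as \cite[Lemma~5.4]{BY}'' — is the bookkeeping that lets one replace an arbitrary witness $\bx$ by a nearby centre $\bx_0$ that still lies in the \emph{generic} set $\bU\setminus\mathfrak{M}^{new}(\varepsilon,t)$, so that Lemma~\ref{lemma: inhomo count} is applicable to $\Delta_t(\bx_0)$. This is precisely what the thickening by radius $\varepsilon^{1/2}e^{-t/2}$ in \eqref{def: minor} is designed to guarantee, and matching that thickening radius to the covering scale $\ell=(\varepsilon e^{-t})^{1/2}$ is the crux. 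The secondary technical point is ensuring $t$ is large enough that $\Delta_t(\bx_0^{(j)})\subset\bU$ and that the volume inflation constant is at most $2$; both are harmless for $t\ge t_0(B,n,\f)$.
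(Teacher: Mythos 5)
Your proof is correct and is the covering argument the paper refers to (\cite[Lemma~5.4]{BY}): take a maximal $(\varepsilon e^{-t})^{1/2}$-separated subset of $B\cap(\bU\setminus\mathfrak{M}^{new}(\varepsilon,t))$; a packing/volume bound shows it has at most $2(\varepsilon e^{-t})^{-d/2}\Leb_d(B)$ elements once $t$ is large (so that $(1+\ell/2r)^d\le 2$, where $r$ is the radius of $B$), its radius-$(\varepsilon e^{-t})^{1/2}$ balls cover all witnesses, and a union bound finishes. One small correction to your motivational discussion (first and last paragraphs): the step ``outside the raw set, and in particular outside $\mathfrak{M}^{new}(\varepsilon,t)$'' is backwards, since $\mathfrak{M}^{new}_0(\varepsilon,t)\subset\mathfrak{M}^{new}(\varepsilon,t)$ -- lying outside the raw set does not place a point outside the thickening. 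Your rigorous second paragraph sidesteps this entirely by drawing the centres directly from $B\cap(\bU\setminus\mathfrak{M}^{new}(\varepsilon,t))$, which is what makes Lemma~\ref{lemma: inhomo count} applicable to each; the thickening radius in \eqref{def: minor} is chosen so that the Taylor-expansion argument in Lemma~\ref{inclusion} goes through, not for this covering step.
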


As a consequence of Lemma \ref{lemma: inhomo count}, and Lemma \ref{lemma: covering count}, we have the following proposition.
\begin{proposition}\label{Propo: semi}
    Let $\f$, $\bU$ be as before. Then for any $0<\varepsilon<1,$
    for any ball $B\subset \bU$, and all sufficiently large $t>0,$
    we have 
    \begin{equation}
        N_{\ta}(B\setminus \mathfrak{M}^{new}(\varepsilon,t); \varepsilon,t)\ll_{n,\f} \varepsilon^m e^{(d+1)t}\Leb_d(B).
    \end{equation}
\end{proposition}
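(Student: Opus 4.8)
The plan is to combine the two lemmas exactly as the text suggests, so this is really a bookkeeping argument rather than a new idea.

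\medskip

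\noindent\textbf{Step 1: Apply the covering count.} Fix a ball $B\subset\bU$ and take $t$ large enough for both Lemma~\ref{lemma: inhomo count} and Lemma~\ref{lemma: covering count} to apply. By Lemma~\ref{lemma: covering count},
$$
N_{\ta}(B\setminus\mathfrak{M}^{new}(\varepsilon,t);\varepsilon,t)\;\le\;2(\varepsilon e^{-t})^{-d/2}\Leb_d(B)\cdot\max_{\bx_0\in B\cap(\bU\setminus\mathfrak{M}^{new}(\varepsilon,t))}N_{\ta}(\Delta_t(\bx_0);\varepsilon,t).
$$

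\noindent\textbf{Step 2: Insert the pointwise bound.} For each admissible $\bx_0$, Lemma~\ref{lemma: inhomo count} gives $N_{\ta}(\Delta_t(\bx_0)\cap B;\varepsilon,t)\ll_{n,\f}\varepsilon^n e^t(\varepsilon e^{-t})^{-d/2}$; since $\Delta_t(\bx_0)\subset\Delta_t(\bx_0)\cap(\text{a slightly enlarged ball})$, and the counting set only shrinks under intersection with $B$, one may freely replace $\Delta_t(\bx_0)$ by $\Delta_t(\bx_0)\cap B$ (or note Lemma~\ref{lemma: inhomo count} is stated for $\Delta_t(\bx_0)\cap B$ already, so we use that form and the same max). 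Substituting,
$$
N_{\ta}(B\setminus\mathfrak{M}^{new}(\varepsilon,t);\varepsilon,t)\;\ll_{n,\f}\;(\varepsilon e^{-t})^{-d/2}\Leb_d(B)\cdot\varepsilon^n e^t(\varepsilon e^{-t})^{-d/2}\;=\;\varepsilon^n e^t(\varepsilon e^{-t})^{-d}\Leb_d(B).
$$

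\noindent\textbf{Step 3: Simplify the exponent.} Compute $\varepsilon^n e^t(\varepsilon e^{-t})^{-d}=\varepsilon^{n-d}e^{t}e^{dt}=\varepsilon^{m}e^{(d+1)t}$, using $n-d=m$. This yields exactly $N_{\ta}(B\setminus\mathfrak{M}^{new}(\varepsilon,t);\varepsilon,t)\ll_{n,\f}\varepsilon^m e^{(d+1)t}\Leb_d(B)$, which is the assertion of Proposition~\ref{Propo: semi}.

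\medskip

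There is essentially no obstacle here: the two input lemmas do all the work, and the only thing to be careful about is that the ``large enough $t$'' thresholds in Lemma~\ref{lemma: inhomo count} and Lemma~\ref{lemma: covering count} can be taken simultaneously (they depend only on $B$, $n$, $\f$), and that the domain of the max in Lemma~\ref{lemma: covering count} matches the domain over which Lemma~\ref{lemma: inhomo count} is valid — both range over $\bx_0\in B\cap(\bU\setminus\mathfrak{M}^{new}(\varepsilon,t))$. The arithmetic with the exponents of $\varepsilon$ and $e^t$ is the only computation, and it is the one displayed in Step~3.
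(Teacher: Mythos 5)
Your proposal is correct and follows exactly the paper's intended route: the paper proves the proposition by the one-line remark that it follows from combining Lemma~\ref{lemma: inhomo count} and Lemma~\ref{lemma: covering count}, and your Steps~1--3 carry out that combination with the right exponent arithmetic ($\varepsilon^n e^t(\varepsilon e^{-t})^{-d}=\varepsilon^m e^{(d+1)t}$). One small point worth tightening: the aside about ``freely replacing'' $\Delta_t(\bx_0)$ by $\Delta_t(\bx_0)\cap B$ is phrased in the wrong direction (intersecting with $B$ only \emph{decreases} the count, which would not by itself let you upgrade Lemma~\ref{lemma: inhomo count}'s bound to $N_{\ta}(\Delta_t(\bx_0);\varepsilon,t)$); the clean resolution of the mismatch is that the covering argument behind Lemma~\ref{lemma: covering count} in fact produces the $\max$ over $N_{\ta}(\Delta_t(\bx_0)\cap B;\varepsilon,t)$, since any rational point it captures has a nearby $\bx$ lying in $B\setminus \mathfrak{M}^{new}(\varepsilon,t)\subset B$, and therefore in $\Delta_t(\bx_i)\cap B$ for some covering ball $\Delta_t(\bx_i)$.
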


\subsection{Nongeneric part} First, for some matrices introduced in previous sections, we compute their dual matrices. For $\varepsilon>0, t>0,$ 
\begin{equation}
g_{\varepsilon,t}^\star= \phi^{-1}\diag(e^t,\underbrace{\varepsilon,\dots,\varepsilon}_{n}),
    \end{equation}
    \begin{equation}\label{eqn:dual u_1(x)again}
        u_1^\star(\bx)=\begin{bmatrix}
            1 & -\bx & -\fff(\bx)\\
            0 & \mathrm{I}_d & J(\bx)\\
            0 & 0   & \mathrm{I}_m
        \end{bmatrix},
    \end{equation}
$$d^\star_\varepsilon=\diag(1,  \underbrace{\varepsilon^{-1/2},\dots,\varepsilon^{-1/2}}_d, \underbrace{1,\dots,1}_m),$$ and $$b^\star_t=\diag(e^{-h},  \underbrace{e^{\frac{(m+1)t}{2(n+1)}},\dots, e^{\frac{(m+1)t}{2(n+1)}}}_d, \underbrace{e^{-h},\dots,e^{-h}}_m).$$

The next lemma shows that the new special part is inside the set defined in \eqref{eqn5.1}.

\begin{lemma}\label{inclusion}
For $\varepsilon>0, t>0$ and $\f$ as before,
   $$ \mathfrak{M}^{new}(\varepsilon,t)\subset \mathfrak{S}_{\f}( ce^{-t}, c\varepsilon^{-1/2} e^{-t/2}, c\varepsilon^{-1}),$$ where $c$ depends on $n,\f,$ and the above sets are defined in \eqref{eqn5.1} and \eqref{def: minor}.
\end{lemma}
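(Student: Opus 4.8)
The plan is to unwind the definition of $\mathfrak{M}^{new}(\varepsilon,t)$, dualise the condition on $\delta_{n+1}$ that cuts out $\mathfrak{M}^{new}_0(\varepsilon,t)$ via Lemma~\ref{dual lemma}, and then transport the resulting Diophantine inequalities from the centre of a thickening ball to the nearby point $\bx$ by Taylor's formula. First I would fix $\bx\in\mathfrak{M}^{new}(\varepsilon,t)$, choose $\bx^*\in\mathfrak{M}^{new}_0(\varepsilon,t)$ with $\|\bx-\bx^*\|<\varepsilon^{1/2}e^{-t/2}$, and set $M:=d_\varepsilon b_t g_{\varepsilon,t}u_1(\bx^*)$, so that $\delta_{n+1}(M\Z^{n+1})>\phi e^{h}$. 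Since $g\mapsto g^\star$ is a homomorphism and an involution, $M^\star=d_\varepsilon^\star b_t^\star g_{\varepsilon,t}^\star u_1^\star(\bx^*)$, and Lemma~\ref{dual lemma} applied to $M^\star$ gives $\delta_1(M^\star\Z^{n+1})\ll_n\delta_{n+1}(M\Z^{n+1})^{-1}<(\phi e^{h})^{-1}$. Multiplying the three diagonal matrices recorded above and using $h=\tfrac{dt}{2(n+1)}$, so that $e^{-h-\frac{(m+1)t}{2(n+1)}}=e^{-t/2}$, one checks that $d_\varepsilon^\star b_t^\star g_{\varepsilon,t}^\star=\phi^{-1}\diag\big(e^{t-h},\underbrace{\varepsilon^{1/2}e^{\frac{(m+1)t}{2(n+1)}},\dots}_{d},\underbrace{\varepsilon e^{-h},\dots}_{m}\big)$.

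Next I would take a shortest nonzero vector of $M^\star\Z^{n+1}$, say $M^\star(v_0,\bw,\bz)^\top$ with $(v_0,\bw,\bz)\in(\Z\times\Z^d\times\Z^m)\setminus\{\vv0\}$, and expand $u_1^\star(\bx^*)(v_0,\bw,\bz)^\top$ using \eqref{eqn:dual u_1(x)again}. Reading off the three coordinate blocks of the bound $\|M^\star(v_0,\bw,\bz)^\top\|\ll_n(\phi e^{h})^{-1}$ yields
\[
|v_0-\bx^*\!\cdot\bw-f(\bx^*)\!\cdot\bz|\ll e^{-t},\qquad \|\bw^\top+J(\bx^*)\bz^\top\|\ll\varepsilon^{-1/2}e^{-t/2},\qquad \|\bz\|\ll\varepsilon^{-1}.
\]
Setting $a_0:=v_0$ and $\ba:=(-\bw,-\bz)\in\Z^n$ and recalling $\f=(\bx,f(\bx))$, one has $a_0+\f(\bx^*)\ba^\top=v_0-\bx^*\!\cdot\bw-f(\bx^*)\!\cdot\bz$ and $\nabla\f(\bx^*)\ba^\top=-\big(\bw^\top+J(\bx^*)\bz^\top\big)$; also $|a_i|\ll\varepsilon^{-1}$ for $1\le i\le d$ follows from the second and third inequalities together with \eqref{condi2}, and $\ba\neq\vv0$, since otherwise $v_0$ would be a nonzero integer with $|v_0|\ll e^{-t}<1$. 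Hence $\bx^*\in\mathfrak{S}_{\f}(c_0e^{-t},c_0\varepsilon^{-1/2}e^{-t/2},c_0\varepsilon^{-1})$ for some $c_0=c_0(n,\f)$.

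The last step is to transfer these inequalities from $\bx^*$ to $\bx$, and this is the heart of the matter. For the gradient, the mean value theorem and the second-derivative bound of \eqref{condi2} give $\|\nabla\f(\bx)\ba^\top-\nabla\f(\bx^*)\ba^\top\|\ll\|\bz\|\,\|\bx-\bx^*\|\ll\varepsilon^{-1}\cdot\varepsilon^{1/2}e^{-t/2}=\varepsilon^{-1/2}e^{-t/2}$. For the linear form one must not estimate $\sum_i(x_i-x_i^*)a_i$ crudely (it has size $\asymp\varepsilon^{-1/2}e^{-t/2}$, which is far too large); instead one Taylor-expands each $f_j$ to second order about $\bx^*$ and regroups, getting
\[
\f(\bx)\ba^\top-\f(\bx^*)\ba^\top=\sum_{i=1}^{d}(x_i-x_i^*)\big(\nabla\f(\bx^*)\ba^\top\big)_i+O\!\big(\|\bz\|\,\|\bx-\bx^*\|^{2}\big),
\]
whose two terms are $\ll\varepsilon^{1/2}e^{-t/2}\cdot\varepsilon^{-1/2}e^{-t/2}=e^{-t}$ and $\ll\varepsilon^{-1}\cdot\varepsilon e^{-t}=e^{-t}$. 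Putting this together, $\bx\in\mathfrak{S}_{\f}(ce^{-t},c\varepsilon^{-1/2}e^{-t/2},c\varepsilon^{-1})$ for a suitable $c=c(n,\f)$ absorbing all implied constants, which is the desired inclusion.

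The hard part is precisely this last transfer: to keep the Taylor remainder of the linear form within $O(e^{-t})$ one is forced to regroup through $\nabla\f(\bx^*)\ba^\top$, and this works only because membership in $\mathfrak{M}^{new}_0$ makes the dual lattice short in a way that pins $\|\nabla\f(\bx^*)\ba^\top\|$ down to size $\varepsilon^{-1/2}e^{-t/2}$ — which is exactly what the extra rescaling factor $d_\varepsilon$ and the thickening radius $\varepsilon^{1/2}e^{-t/2}$ are engineered to give; the dual-matrix bookkeeping and the use of Lemma~\ref{dual lemma} are routine by comparison. I would also record in passing that $t$ is taken large enough that the segment $[\bx^*,\bx]$ lies in $\bU$ and that $|v_0|\ll e^{-t}$ forces $v_0=0$ when $\ba=\vv0$.
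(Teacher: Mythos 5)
Your proposal is correct and follows essentially the same route as the paper: pass to the dual lattice via Lemma~\ref{dual lemma} and the multiplicativity of $\star$, read off the three coordinate blocks from $\delta_1(M^\star\Z^{n+1})\ll(\phi e^h)^{-1}$ to get the inequalities at a raw point $\bx^*\in\mathfrak{M}^{new}_0(\varepsilon,t)$, and then transport them to $\bx$ using a second‑order Taylor expansion. Your explicit regrouping $\f(\bx)\ba^\top-\f(\bx^*)\ba^\top=\sum_i(x_i-x_i^*)\big(\nabla\f(\bx^*)\ba^\top\big)_i+O(\|\bz\|\|\bx-\bx^*\|^2)$ is exactly the mechanism behind the paper's displayed bound $e^{-t}+\varepsilon^{-1/2}e^{-t/2}\cdot\varepsilon^{1/2}e^{-t/2}+\varepsilon^{-1}(\varepsilon^{1/2}e^{-t/2})^2\ll e^{-t}$, and the dual‑matrix computations match. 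One small observation: you justify $\ba\neq\vv0$ by noting that otherwise $|v_0|\ll e^{-t}<1$ forces $v_0=0$, contradicting that $(v_0,\bw,\bz)$ is a shortest \emph{nonzero} vector; the paper instead asserts $0<\|\ba_m\|$ without comment, so your version is actually a bit cleaner on that point (both implicitly use that $t$, and effectively $\varepsilon e^t$, are large enough).
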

\begin{proof}
    First note that, if $\bx_0\in \mathfrak{M}_0^{new}(\varepsilon,t)$ then 
    $$\delta_{n+1}(d_\varepsilon b_t g_{\varepsilon, t} u_1(\bx_0)\Z^{n+1})>\phi e^{h}, \text{ where } h=\frac{dt}{2(n+1)}.$$
Then by Lemma \ref{dual lemma}, and the fact that $(g_1 g_2)^\star=g_1^\star g_2^\star,$ $$\delta_1(d^\star_\varepsilon b^\star_t g^\star_{\varepsilon, t} u^\star_1(\bx_0)\Z^{n+1})\ll_{n}\phi^{-1} e^{-h}.$$
    This implies there exists $(a_0, \ba)\in \Z^{n+1}\setminus \mathbf{0}$ such that 
    
    \begin{equation}\label{raw_BKM}
    \begin{aligned}
    &\vert a_0+\f(\bx_0)\ba^\top\vert\ll_{n} e^{-t}\\
    & \Vert \nabla\f(\bx_0)\ba^\top\Vert\ll_n \varepsilon^{-1/2} e^{-t/2}\\
    & 0<\Vert \ba_m\Vert \ll_n \varepsilon^{-1},
    \end{aligned}
    \end{equation} where $\ba_m=(a_{d+1},\dots, a_n).$
Note that using the second and third inequalities in \eqref{raw_BKM} and by 
\eqref{Monge} we get $\Vert \ba\Vert \ll_{n, \f} \varepsilon^{-1}.$
    
    Now for any $\bx\in \mathfrak{M}^{new}(\varepsilon,t),$ there exists $\bx_0\in \mathfrak{M}_0^{new}(\varepsilon,t)$ such that $\Vert \bx-\bx_0\Vert\leq \varepsilon^{1/2} e^{-t/2}$, we have by Taylor's expansion, 
    $$\vert a_0+\f(\bx)\ba^\top\vert \ll_{n,\f} e^{-t}+ \varepsilon^{-1/2} e^{-t/2} \varepsilon^{1/2} e^{-t/2}+ \varepsilon^{-1} (\varepsilon^{1/2} e^{-t/2})^2\ll_{n,\f} e^{-t}.$$
    Similarly, using Taylor's expansion we get, 
    $$
    \Vert \nabla\f(\bx)\ba^\top\Vert\ll_{n,\f} \varepsilon^{-1/2} e^{-t/2}+ \varepsilon^{-1} \varepsilon^{1/2} e^{-t/2}\ll_{n,\f}
    \varepsilon^{-1/2} e^{-t/2}.
    $$
    Thus the lemma follows.

\end{proof}
Using Lemma \ref{inclusion} and Theorem~\ref{thm:KM} we have the following proposition.

\begin{proposition}\label{Bound on minor}
    Let $\bU$ be an open set in $\R^d$, $\f:\bU\to\R^n$ be a map satisfying \eqref{Monge}. Let $\f$ be $l$-nondegenerate at $\bx_0\in \bU.$ Then there exists a ball $B_0$ centered at $\bx_0$, constants $K_0, t_0$ depending on $n,\f, B_0$ such that  for $t\geq t_0,$
    $$
    \Leb_{d}(\mathfrak{M}^{new}(\varepsilon, t)\cap B_0)\ll K_0 (e^{-t} \varepsilon^{-1/2} e^{-t/2} \varepsilon^{-(n-1)})^{\alpha}\Leb_d(B_0)= (e^{3t/2} \varepsilon^{n-1/2})^{-\alpha}\Leb_d(B_0),
    $$ where $\alpha= \frac{1}{d(2l-1)(n+1)}.$
\end{proposition}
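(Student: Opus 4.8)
The plan is to deduce the proposition directly from Lemma~\ref{inclusion} together with the Kleinbock--Margulis quantitative nondivergence estimate stated as Theorem~\ref{thm:KM}. Since $\f$ is $l$-nondegenerate at $\bx_0$, Theorem~\ref{thm:KM} supplies a ball $B_0\subset\bU$ centred at $\bx_0$ and a constant $E\ge1$; I would fix this $B_0$. By Lemma~\ref{inclusion} there is a constant $c=c(n,\f)$, which we may assume is $\ge1$ (harmless, since $\mathfrak{S}_{\f}(\delta,K,\bT)$ is non-decreasing in each of $\delta,K,T_1,\dots,T_n$), such that
$$
\mathfrak{M}^{new}(\varepsilon,t)\subset\mathfrak{S}_{\f}\big(ce^{-t},\;c\varepsilon^{-1/2}e^{-t/2},\;c\varepsilon^{-1}\big),
$$
where the last argument means $T_1=\dots=T_n=c\varepsilon^{-1}$.

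Next I would verify that the parameters $\delta=ce^{-t}$, $K=c\varepsilon^{-1/2}e^{-t/2}$, $T_1=\dots=T_n=c\varepsilon^{-1}$ satisfy the admissibility conditions \eqref{eqn5.2}. Since $0<\varepsilon<1$ and $c\ge1$ we have $T_i=c\varepsilon^{-1}\ge1$ and $K>0$, and taking $t\ge t_0:=\log c$ forces $0<\delta\le1$. The remaining requirement $\delta^{n}<K\,T_1\cdots T_n/\max_i T_i$ reads $c^{n}e^{-nt}<c^{n}\varepsilon^{-(n-1/2)}e^{-t/2}$, i.e.\ $e^{-(n-1/2)t}<\varepsilon^{-(n-1/2)}$, which holds for all $t\ge0$ because $\varepsilon<1<e^{t}$. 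Hence Theorem~\ref{thm:KM} applies on $B_0$ and yields
$$
\Leb_d\big(\mathfrak{M}^{new}(\varepsilon,t)\cap B_0\big)\le\Leb_d\big(\mathfrak{S}_{\f}(\delta,K,\bT)\cap B_0\big)\le E\Big(\delta K\,\tfrac{T_1\cdots T_n}{\max_i T_i}\Big)^{\alpha}\Leb_d(B_0).
$$

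Finally I would simplify the right-hand side. With all $T_i$ equal to $c\varepsilon^{-1}$ one has $T_1\cdots T_n/\max_i T_i=(c\varepsilon^{-1})^{n-1}$, so
$$
\delta K\,\frac{T_1\cdots T_n}{\max_i T_i}=ce^{-t}\cdot c\varepsilon^{-1/2}e^{-t/2}\cdot(c\varepsilon^{-1})^{\,n-1}=c^{\,n+1}\,e^{-3t/2}\varepsilon^{-(n-1/2)},
$$
and therefore the bound becomes $Ec^{(n+1)\alpha}\big(e^{3t/2}\varepsilon^{n-1/2}\big)^{-\alpha}\Leb_d(B_0)$. Setting $K_0:=Ec^{(n+1)\alpha}$, a constant depending only on $n$, $\f$ and $B_0$, gives precisely the asserted inequality. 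The argument is entirely routine once Lemma~\ref{inclusion} is in hand, so there is no genuine obstacle here; the only points that merit a line of checking are the normalisation $0<\delta\le1$ (handled by the threshold $t_0$) and the admissibility inequality $\delta^{n}<K\,T_1\cdots T_n/\max_i T_i$, which as noted holds automatically from $\varepsilon<1$.
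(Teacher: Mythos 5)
Your proof is correct and follows exactly the route the paper intends: combine Lemma~\ref{inclusion} with Theorem~\ref{thm:KM}, verify the admissibility conditions \eqref{eqn5.2}, and simplify. The paper leaves these verifications implicit (it just says the proposition follows from those two ingredients), so your write-up is simply a fuller version of the same argument.
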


\subsection{Completing the proof convergence}
Combining Proposition \ref{Propo: semi} and Proposition \ref{Bound on minor}, we have the key proposition as follows.

\begin{proposition}\label{propo: main }
Let $\bU\subset \R^d$ be an open set and $\f:\bU\to\R^n$ be a map satisfying \ref{Monge}. Then for any $0<\varepsilon<1$ and every $t>0$ there exists $\mathfrak{M}^{new}(\varepsilon, t)\subset \bU,$ which can be written as a union of balls of radius $\varepsilon^{1/2} e^{-t/2}$ of intersection multiplicity $N_d.$ If $\f$ is nondegenerate at $\bx_0$ then there is a ball $B_0\subset \bU$ centered at $\bx_0$ and constants $K_0, t_0>0$, which depend on $n, \f$ and $B_0$ only, such that for $t\geq t_0$, 
\begin{equation}\label{bound_minor_crucial}
    \Leb_d( \mathfrak{M}^{new}(\varepsilon, t)\cap B_0)\leq K_0 \left(\varepsilon^{n-1/2} e^{3t/2}\right)^{-\alpha},  \alpha= \frac{1}{d(2l-1)(n+1)},
\end{equation}
and for every ball $B\subset \bU$ and sufficiently large $t$, we have 
\begin{equation}
N_{\ta}(B\setminus \mathfrak{M}^{new}(\varepsilon, t); \varepsilon, t) \ll_{n,\f} \varepsilon^{m} e^{(d+1)t} \Leb_d(B).
\end{equation}

\end{proposition}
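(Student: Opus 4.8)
The plan is to read off both assertions directly from the two propositions already established. Recall that $\mathfrak{M}^{new}(\varepsilon,t)$ is the set defined in \eqref{def: minor}, namely $\mathfrak{M}^{new}(\varepsilon,t)=\bigcup_{\bx\in\mathfrak{M}^{new}_0(\varepsilon,t)}B(\bx,\varepsilon^{1/2}e^{-t/2})\cap\bU$; tautologically this is a union of balls of radius $\varepsilon^{1/2}e^{-t/2}$. To obtain the bounded intersection multiplicity I would pass to a subfamily: pick a maximal $\varepsilon^{1/2}e^{-t/2}$-separated subset $\{\bx_i\}$ of $\mathfrak{M}^{new}_0(\varepsilon,t)$; then any ball $B(\bx,\varepsilon^{1/2}e^{-t/2})$ with $\bx\in\mathfrak{M}^{new}_0(\varepsilon,t)$ is contained in some $B(\bx_i,2\varepsilon^{1/2}e^{-t/2})$, and since the centres $\bx_i$ are $\varepsilon^{1/2}e^{-t/2}$-separated a volume-packing argument in $\R^d$ bounds the intersection multiplicity of the family $\{B(\bx_i,2\varepsilon^{1/2}e^{-t/2})\}$ by a constant $N_d$ depending on $d$ alone. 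Thus, after absorbing the fixed factor $2$ into the notation, $\mathfrak{M}^{new}(\varepsilon,t)$ is expressed as a union of balls of radius $\asymp\varepsilon^{1/2}e^{-t/2}$ with intersection multiplicity at most $N_d$, which is the first assertion.

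For the measure estimate \eqref{bound_minor_crucial}, I would simply invoke Proposition~\ref{Bound on minor}: since $\f$ is nondegenerate at $\bx_0$, that proposition yields a ball $B_0$ centred at $\bx_0$ and constants $t_0,K_0>0$, depending on $n$, $\f$ and $B_0$ only, so that $\Leb_d(\mathfrak{M}^{new}(\varepsilon,t)\cap B_0)\ll(\varepsilon^{n-1/2}e^{3t/2})^{-\alpha}\Leb_d(B_0)$ for all $t\ge t_0$, with $\alpha=\frac{1}{d(2l-1)(n+1)}$. As $B_0$ is fixed, both $\Leb_d(B_0)$ and the implied constant are absorbed into a single $K_0$, giving \eqref{bound_minor_crucial}. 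For the counting estimate I would invoke Proposition~\ref{Propo: semi}, which states that for every ball $B\subset\bU$ and all sufficiently large $t$ one has $N_{\ta}(B\setminus\mathfrak{M}^{new}(\varepsilon,t);\varepsilon,t)\ll_{n,\f}\varepsilon^m e^{(d+1)t}\Leb_d(B)$ --- this is precisely the second displayed inequality. It then remains only to enlarge $t_0$ (and $K_0$) so that it simultaneously accommodates Proposition~\ref{Bound on minor}, Proposition~\ref{Propo: semi} and the covering step above.

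I do not expect any genuine obstacle in this argument: all of the analytic content is already contained in Proposition~\ref{Bound on minor} (which in turn rests on Lemma~\ref{inclusion} and Theorem~\ref{thm:KM}) and in Proposition~\ref{Propo: semi} (resting on Lemma~\ref{lemma: inhomo count} and Lemma~\ref{lemma: covering count}). The one point requiring care is that a single set $\mathfrak{M}^{new}(\varepsilon,t)$ must at once be small in measure on $B_0$ and large enough that its complement contains few rational points; this is automatic because both propositions refer to exactly the set of \eqref{def: minor}, and the covering reduction replaces it only by a slightly larger union --- which can only shrink $B\setminus\mathfrak{M}^{new}(\varepsilon,t)$ and enlarge $\mathfrak{M}^{new}(\varepsilon,t)\cap B_0$ by a bounded factor, leaving both estimates valid in the same form.
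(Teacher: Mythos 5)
Your proposal is correct and follows essentially the same route as the paper, which simply combines Proposition~\ref{Propo: semi} (for the counting estimate on the generic part) with Proposition~\ref{Bound on minor} (for the measure bound on $\mathfrak{M}^{new}$). The one piece you supply that the paper leaves implicit is the Vitali-type separation argument for the bounded intersection multiplicity $N_d$; this is a standard step and, as you note, the resulting rescaling by a factor of $2$ on the radii is harmless since all of the estimates feeding into Propositions~\ref{Propo: semi} and~\ref{Bound on minor} carry implied constants that absorb such a change.
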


Now, we can complete the proof of Theorem~\ref{mainthm: upperbound}, Theorem~\ref{thm: main} and \ref{Thm: Hausdorff measure} using the above proposition.

\subsection{Proof of Theorem~\ref{mainthm: upperbound}}
    Let $\alpha, \f$, $B_0$, and $ \bx_0$ be as in Proposition \ref{propo: main }. For $1>\varepsilon>0$ and all large $t>0$,
    \begin{equation}
        N_{\ta}(B_0;\varepsilon,t)\ll_{n,\f, B_0}\varepsilon^m e^{(d+1)t}+ e^{(d+1)t} \left(\varepsilon^{n-1/2} e^{3t/2}\right)^{-\alpha}.
    \end{equation} 
To get the second term above, we cover $\mathfrak{M}^{new}(\varepsilon,t)\cap B_0$ by $\ll \frac{\Leb_d(\mathfrak{M}(\varepsilon, t)\cap B_0)}{\varepsilon^{d/2} e^{-td/2}}$ balls of radius $\varepsilon^{1/2} e^{-t/2}$.

\subsection{Proof of convergence part in Theorem~\ref{thm: main}}
Since it is enough to prove this theorem for approximating function being $\max\{\psi(q), q^{-\frac{5}{4n}}\}$, without loss of generality, we assume that for all $q\in\N$,
\begin{equation}\label{condition on si}
\psi(q)^n>q^{-\frac{5}{4}}.
\end{equation}
For any $\Delta\subset \R^d$, any $t>0$ and $0<\varepsilon<1,$ we define 
$$\mathcal{R}_{\ta}(\Delta;\varepsilon,t):=\left\{(\bp,q)\in\Z^{n+1}~|~e^{t-1}\leq q<e^t, ~\inf_{\bx\in\Delta}\left\Vert \f(\bx)-\frac{\bp+\ta}{q}\right\Vert <\frac{\varepsilon}{e^t}\right\}.$$
By definition as in \S \ref{intro_counting},
$$N_{\ta}^\star(\Delta;\varepsilon,t)= \# \mathcal{R}_{\ta}(\Delta;\varepsilon,t).$$
Now, 
$$\begin{aligned}
&\{\bx\in B_0~|~\f(\bx)\in \mathcal{S}^{\ta}_{n}(\psi)\}\subset \bigcup_{t\geq T} {\underbrace{\left(\mathfrak{M}^{new}(e\psi(e^{t-1}), t)\cap B_0 \right)}_{A_t}} \bigcup \\
& \bigcup_{t\geq T}\underbrace{\bigcup_{(\bp,q)\in \mathcal{R}_{\ta}(B_0\setminus \mathfrak{M}^{new}(e \psi(e^{t-1}), t); e\psi(e^{t-1}),t)} \left\{\bx\in B_0~|~\left\Vert \bx-\frac{\pi(\bp+\ta)}{q}\right\Vert \leq \frac{\psi(e^{t-1})}{e^{t-1}}\right\}}_{B_t^{\ta}}.
\end{aligned}$$
Here $\pi:\R^n\to\R^d$ is the projection of the first $d$ many coordinates.
By \eqref{bound_minor_crucial}, we have 
$$
\Leb_d (A_t)\ll \left(\psi(e^{t})^{n-1/2} e^{3t/2}\right)^{-\alpha}\stackrel{\eqref{condition on si}}{<}\left(e^{-(1-1/2n)5t/4} e^{3t/2}\right)^{-\alpha}= (e^{(1+5/2n)t/4})^{-\alpha},
$$
where $\alpha>0$ is as in Proposition \ref{propo: main }.
Next, observe that 
$$
\Leb_d(B_t^{\ta})\ll \psi(e^{t-1})^m e^{(d+1)t}  \left(\frac{\psi(e^{t-1})}{e^{t-1}}\right)^d\ll \psi(e^{t-1})^{n} e^{t-1}.
$$ Since $\sum \psi(q)^n<\infty,$ we have $\sum_{t\geq 1} \psi(e^t)^n e^t<\infty.$
The proof is now complete by using the convergence Borel Cantelli lemma.
\subsection{Proof of Theorem~\ref{Thm: Hausdorff measure} }\label{proof main2}
By \eqref{eqn: measure condition on degenerate}, it is enough to prove $$
 \mathcal{H}^s(\{\bx\in B_0~|~\f(\bx)\in \mathcal{S}_n^{\ta}(\psi)\})=0,$$ where $B_0$ is a very small neighborhood of $\bx_0$, and $\f$ is $l$-nondegenerate at $\bx_0.$ Let us fix $B_0$ as in Proposition \ref{propo: main }, for $\varepsilon=e\psi(e^{t-1})$, for all large $t>0$, $\mathfrak{M}^{new}(\varepsilon,t)$ can be covered as union of balls of radius $\varepsilon^{1/2} e^{-t/2}$ such that number of such balls will be $\ll \Leb_d(\mathfrak{M}(\varepsilon,t)/ (\varepsilon^{d/2} e^{-dt/2}).$ Hence by Proposition \ref{propo: main } we have that,
 $$\mathcal{H}^s(A_t)\ll \left(\varepsilon^{1/2} e^{-t/2}\right)^{s-d}\Leb_d(\mathfrak{M}^{new}(\varepsilon,t)).  $$ 
 By Proposition \ref{propo: main }, plugging $\varepsilon=e\psi(e^{t-1})$ we get that 

$$\sum_{t\geq T}\mathcal{H}^s(A_t)\ll  \sum_{t\geq T}\left(\frac{\psi(e^t
)}{e^{t}}\right)^{(s-d)/2} \left(\psi(e^t)^{n-1/2} e^{3t/2}\right)^{-\alpha}\to 0,$$ by \eqref{eqn: conv 3}, as $T\to\infty.$

Also, 
 
 $$\sum_{t\geq T}\mathcal{H}^s(B_t^{\ta})\ll \sum_{t\geq T}\varepsilon^m e^{(d+1)t} \left(\varepsilon e^{-t}\right)^{s}, \text{ where } \varepsilon=e\psi(e^{t-1})$$
and the above sum tends to zero by \eqref{eqn: conv 2} as $T\to\infty.$

\subsection{Proof of Corollary \ref{exact}}\label{proof: exact}
The upper bound follows directly from Theorem~\ref{Thm: Hausdorff measure} on applying $\psi(q)= q^{-\tau}$. The proof of this goes exactly as in the proof of \cite[Corollary 2.9]{BY} and \cite[Corollary 1.13]{SST} upon using Theorem~\ref{Thm: Hausdorff measure}. For the lower bound we check that $\tau$ as in \eqref{bad range of tau} also lies in the range $(1/n, \eta)$ as in \ref{main: Haus_div}. Let us take $\frac{(n+1)\alpha}{3\alpha+1}<\delta<\frac{n+1}{n+2}.$ Then it is easy to check that $\frac{1}{n-\delta}\in (1/n, \eta)\cap (1/n, \frac{3\alpha+1}{\alpha(2n-1)+n}).$


\bigskip

\noindent\textbf{Acknowledgments.} This work was supported by the EPSRC grant EP/Y016769/1. SD is grateful to Ralf Spatzier for their support, especially when it was needed the most. SD also thanks Subhajit Jana for numerous discussions around this project and for his constant encouragement.

\bibliographystyle{abbrv}
\bibliography{inhomosim}

\end{document}